\newtheorem{theorem}{Theorem}[section]
\newtheorem{lemma}[theorem]{Lemma}
\newtheorem{proposition}[theorem]{Proposition}
\newtheorem*{theorem:main}{Main Theorem}
\newtheorem*{thmA}{Theorem A}
\newtheorem*{thmB}{Theorem B}
\theoremstyle{definition}
\newtheorem{definition}[theorem]{Definition}
\newtheorem{example}[theorem]{Example}
\newtheorem{fact}[theorem]{Facts}
\theoremstyle{remark}
\newtheorem{remark}[theorem]{Remark}
\numberwithin{equation}{section}
\newcommand{\free}{\mathbb{F}} 
\newcommand{\fltr}{\emptyset = G_0 \subset G_1 \subset \dots \subset G_K = G}
\newcommand{\Rfltr}{\emptyset = G_0 \subset G_1 \subset \dots \subset G_r = G}
\newcommand{\rfa}{\mathcal{FF}(\free,\mathcal{A})} 
\newcommand{\oo}{\Phi} 
\newcommand{\ffa}{\mathcal{A}}
\newcommand{\mc}{\mathcal{MC}(\mathbb{F})}
\newcommand{\pmc}{\mathbb{P}\mathcal{MC}(\mathbb{F})}
\newcommand{\rc}{\mathcal{RC}(\mathcal{A})}
\newcommand{\prc}{\mathbb{P}\mathcal{RC}(\mathcal{A})}
\newcommand{\mrc}{\mathcal{MRC}(\mathcal{A})}
\newcommand{\relOut}{\operatorname{Out}(\mathbb{F}, \ffa)} 
\newcommand{\lamination}{\Lambda^+_{\oo}} 
\newcommand{\PFevalue}{\lambda_{\oo}}
\newcommand{\StableCurrent}{\eta^+_{\oo}}
\newcommand{\UnstableCurrent}{\eta^-_{\oo}}
\newcommand{\RelativeBasis}{\mathfrak{B}_{\ffa}} 
\newcommand{\FreeF}{\mathcal{FF}_n}
\newcommand{\ovrl}{\overline}
\newcommand{\op}{\operatorname}
\newcommand{\alert}{\textcolor{red}}
\newcommand{\la}{\langle}
\newcommand{\ra}{\rangle}
\newcommand{\p}{\prime}
\newcommand{\rank}{\mathfrak{n}}
\begin{document}
\title{Relative Currents}
\author{Radhika Gupta}
\address{Technion, Haifa, Israel}
\curraddr{}
\email{radhikagup@technion.ac.il}
\thanks{The author was partially supported by the NSF grant of Mladen Bestvina (DMS-1607236)}
\subjclass[2010]{Primary 20F65}
\keywords{}
\date{}
\dedicatory{}
\begin{abstract}
In this paper we define currents relative to a free factor system. We prove that a fully irreducible outer automorphism relative to a free factor system acts with uniform north-south dynamics on a subspace of the space of projective relative currents. 
\end{abstract}
\maketitle
\section{Introduction}
The study of the \emph{outer automorphism group} $\op{Out}(\free)$ of a free group $\free$ of rank $n$ is highly motivated by the mapping class group $\op{MCG}(\Sigma)$ of a surface $\Sigma$. The theory of $\op{MCG}(\Sigma)$ has benefited greatly from its action on the curve complex $\mathcal{C}(\Sigma)$ which was proved to be hyperbolic in \cite{MM:CurveComplex}. Analogously, $\op{Out}(\free)$ acts on the free factor complex $\FreeF$ which was proved to be hyperbolic in \cite{BF:FreeFactorComplex}. But sometimes the parallels between the two theories are not straightforward. For instance, consider a mapping class group element acting on $\mathcal{C}(\Sigma)$ with a fixed point, that is, it fixes a curve $\alpha$ on $\Sigma$. We can then look at its action on the curve complex of the subsurface given by the complement of $\alpha$ and understand it by an inductive process. On the other hand, consider an outer automorphism which fixes a free factor $A$ in $\FreeF$. Since the complement of $A$ in $\free$ is not well defined, we cannot pass to the free factor complex of a free group of lower rank. In \cite{HM:RelativeComplex}, Handel and Mosher define a \emph{free factor complex relative to  a free factor system} (also called \emph{relative free factor complex}) which is an $\op{Out}(\free)$-analog of the curve complex of a subsurface. In the same paper, they also prove hyperbolicity of the relative free factor complex for a \emph{non-exceptional} free factor system. 

In this paper we develop the machinery of \emph{currents relative to  a free factor system}. This machinery is then used in \cite{G:Loxodromic} to classify the outer automorphisms that act with positive translation length on a relative free factor complex. 

In \cite{Bonahon}, Bonahon first defined a space of \emph{geodesic currents} for surfaces such that it contains the set of closed curves as a dense set. He studied the embedding of Teichm\"{u}ller space in the space of geodesic currents and recovered Thurston's compactification of Teichm\"{u}ller space. Currents for free groups were first studied by Reiner Martin \cite{Martin} in his thesis. Analogous to geodesic currents, the space of currents for $\free$ contains the set of conjugacy classes of elements of $\free$ as a dense set. 
Currents for free groups have also been studied in \cite{K:FrequencySpace}, \cite{K:Currents}, \cite{KL:IntersectionForm}. 

Let $\ffa$ be a non-trivial free factor system of $\free$. Here a \emph{non-trivial free factor system} is one which is neither $\emptyset$ nor $\{[\free]\}$. In this paper we define a space of \emph{currents relative to  $\ffa$} (also called \emph{relative currents}) such that it contains the conjugacy classes of elements of $\free$ which are not contained in $\ffa$ as a dense set. Let $\partial^2 \free$ be the space of unoriented bi-infinite geodesics in a Cayley graph of $\free$. A \emph{relative current} is a non-negative, additive, $\free$-invariant and flip-invariant function defined on the set of compact open sets of a subspace $\mathbf{Y}$ (Section~\ref{subsec:Definition}) of $\partial^2 \free$, which depends on $\ffa$. The subspace $\mathbf{Y}$ is defined in such a way that the action of $\free$ on $\mathbf{Y}$ is cocompact, which is important for the space of projectivized relative currents to be compact.  

In order to show that the set of certain conjugacy classes of elements of $\free$ is dense in $\prc$, we extend a relative current to a \emph{signed measured current} which is in fact non-negative on words of bounded length (Section~\ref{sec:ExtensionRC}). We then follow the techniques of \cite{Martin}. 

Our main result is a generalization of a theorem in \cite{Martin} (see also \cite{U:HyperbolicIWIP}) which says that an atoroidal fully irreducible outer automorphism acts with uniform north-south dynamics on the space of projectivized currents, $\pmc$. In \cite{Martin} it is also shown that a non-atoroidal fully irreducible outer automorphism acts with uniform north-south dynamics on a proper subspace of $\pmc$, which is given by the closure of primitive conjugacy classes of $\free$ in $\pmc$. In \cite{U:Generalized}, Uyanik shows that a non-atoroidal fully irreducible acts with \emph{generalized} north-south dynamics on $\pmc$. Generalizing Martin's results, we pass to a similarly defined subspace, denoted $\mrc$, of $\prc$.   
     
Let $\relOut$ be the subgroup of $\op{Out}(\free)$ containing outer automorphisms that preserve $\ffa$. After passing to a finite index subgroup of $\relOut$ we can assume that each conjugacy class of free factor $[A]$ in $\ffa$ is invariant under elements of $\relOut$. An outer automorphism $\oo \in \relOut$ is \emph{fully irreducible relative to $\ffa$} if no power of $\oo$ fixes a non-trivial free factor system of $\free$ properly containing $\ffa$.  Let $\zeta(\ffa)$ be the sum of the number of conjugacy classes of free factors in $\ffa$ and the rank of a cofactor of $\ffa$. 

\begin{thmA}\label{T:Main}
Let $\ffa$ be a non-trivial free factor system of $\free$ with $\zeta(\ffa) \geq 3$. Let $\oo \in \relOut$ be fully irreducible relative to $\ffa$. Then $\oo$ acts with uniform north-south dynamics on $\mrc$, that is, there are only two fixed points $[\StableCurrent]$ and $[\UnstableCurrent]$ and any compact set that does not contain $[\UnstableCurrent]([\StableCurrent])$ uniformly converges to $[\StableCurrent]([\UnstableCurrent])$ under $\oo(\oo^{-1})$ iterates respectively.   
\end{thmA}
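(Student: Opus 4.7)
The plan is to adapt the Martin/Uyanik proof of uniform north-south dynamics for $\pmc$ to the relative setting of $\mrc$. The first step is to invoke the relative train track theory (due to Bestvina--Feighn--Handel and Handel--Mosher) to produce a train track representative $f : G \to G$ of a suitable power of $\oo$ on a marked graph $G$ carrying a filtration realizing $\ffa$, such that the top stratum is exponentially growing with Perron--Frobenius transition matrix and eigenvalue $\PFevalue > 1$. Using edge paths $f^n(e)$ for edges $e$ in the top stratum and taking normalized limits of the frequencies of short sub-paths appearing in $f^n(e)$, I would construct the attracting current $\StableCurrent \in \rc$, which by construction is supported on leaves of $\lamination$, lies in $\mrc$, and satisfies $\oo(\StableCurrent) = \PFevalue \cdot \StableCurrent$. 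Applying the same construction to a train track representative of $\oo^{-1}$ produces $\UnstableCurrent$; the two projective classes are distinct since the attracting and repelling laminations are transverse.

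Next I would prove pointwise attracting dynamics. Given $[\mu] \in \mrc$ with $[\mu] \neq [\UnstableCurrent]$, the target is $\oo^n[\mu] \to [\StableCurrent]$ in $\prc$. Using the extension of $\mu$ to a signed measured current (Section~\ref{sec:ExtensionRC}), which is non-negative on short enough words, I would control the weights $\mu(w)$ on cylinders of bounded length. The train track property of $f$ combined with primitivity of the Perron--Frobenius matrix implies that for any fixed length $L$, the frequencies of length-$L$ sub-paths of $f^n(e)$ normalized by $\PFevalue^n$ converge, as $n \to \infty$, to the weights $\StableCurrent(w)$. So long as some positive-weight subword of $\mu$ is realized inside the top stratum, the sequence $\PFevalue^{-n} \oo^n(\mu)$ therefore converges weak-$\ast$ to a positive multiple of $\StableCurrent$. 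Positivity of this multiple is guaranteed unless $\mu$ is entirely supported on the repelling lamination $\Lambda^-_\oo$, in which case $[\mu]=[\UnstableCurrent]$; the role of the hypothesis $\zeta(\ffa)\geq 3$ and the non-exceptional case of relative train track theory is to ensure that $\StableCurrent$ and $\UnstableCurrent$ have genuinely transverse supports, so this dichotomy is clean.

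The main obstacle is upgrading this pointwise convergence to \emph{uniform} convergence on compact subsets of $\mrc$ avoiding $[\UnstableCurrent]$. I plan to handle this by a compactness/contradiction argument: if uniform convergence failed, there would be a compact set $K \subset \mrc$ with $[\UnstableCurrent]\notin K$, a sequence $[\mu_k]\in K$, and integers $n_k \to \infty$ such that $\oo^{n_k}[\mu_k]$ stays outside some neighborhood of $[\StableCurrent]$. Passing to a weak-$\ast$ subsequential limit $[\mu_\infty] \in K$ and applying the pointwise argument (together with the quantitative frequency estimates above) would force $[\mu_\infty]=[\UnstableCurrent]$, contradicting $[\UnstableCurrent] \notin K$. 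Running the symmetric argument with $\oo^{-1}$ in place of $\oo$ then yields uniform convergence to $[\UnstableCurrent]$ on any compact set avoiding $[\StableCurrent]$, completing the proof of uniform north-south dynamics.
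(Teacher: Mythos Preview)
Your overall shape is reasonable, but the upgrade from pointwise to uniform convergence via compactness/contradiction has a genuine gap. Suppose $[\mu_k]\to[\mu_\infty]$ and $\oo^{n_k}[\mu_k]$ stays outside a neighborhood $U$ of $[\StableCurrent]$. Pointwise convergence gives $\oo^n[\mu_\infty]\to[\StableCurrent]$ as $n\to\infty$, but says nothing about $\oo^{n_k}[\mu_k]$: the maps $\oo^{n_k}$ become arbitrarily distorting, so closeness of $[\mu_k]$ to $[\mu_\infty]$ does not force closeness of $\oo^{n_k}[\mu_k]$ to $\oo^{n_k}[\mu_\infty]$. There is no equicontinuity here, and this is exactly the step where naive compactness arguments fail for north--south dynamics.

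The paper avoids this trap entirely. Rather than proving pointwise convergence for general currents, it establishes a \emph{uniform dichotomy on the dense set of rational currents}: using Brinkmann's goodness machinery (analysis of $r$-legal segments and $r$-illegal turns under forward and backward iteration), one shows that there is a single $M$ such that for \emph{every} $\ffa$-separable conjugacy class $\alpha$ and all $m\ge M$, either $\oo^m[\eta_\alpha]\in U$ or $\oo^{-m}[\eta_\alpha]\in V$. This uniform-in-$\alpha$ dichotomy is then fed into a general topological lemma of Lustig--Uyanik (their Proposition~3.4), which converts such a dichotomy on a dense set directly into uniform north--south dynamics for $\oo^2$, and a further easy lemma passes from $\oo^2$ to $\oo$. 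Your proposal bypasses the goodness analysis and the Lustig--Uyanik lemma, and without something playing their role the uniformity cannot be recovered. A secondary issue: you assert the top transition matrix is primitive, but in the relative setting it is only block lower-triangular with a primitive top block; handling this is precisely the content of the substitution-dynamics appendix, and your frequency argument would need to engage with it.
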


We use substitution dynamics techniques to understand the stable current $\StableCurrent$ and the unstable current $\UnstableCurrent$. In the absolute case (when $\ffa = \emptyset$), the transition matrix of a fully irreducible outer automorphism is primitive so Perron-Frobenius theory and the techniques in \cite[Chapter 5]{Queffelec} can be used to define the stable and unstable current. Since the transition matrix of a relative fully irreducible outer automorphism is not primitive and the complement of $\ffa$ is not well-defined, some work needs to be done to define the limiting currents. 
Also some care is required to view an outer automorphism as a substitution due to presence of exceptional paths in a completely split train track representative. See Section~\ref{subsec:AppendixSubstitution} for details. We then study the legal and illegal turn structure of a conjugacy class under iteration by $\oo$.  

Our main application of Theorem A is the following result about loxodromic elements in the relative free factor complex $\rfa$. 
\begin{thmB}[{\cite{G:Loxodromic}}]
Let $\ffa$ be a non-exceptional free factor system of $\free$ and let $\oo \in \relOut$. Then $\oo$ acts loxodromically on $\rfa$ if and only if $\oo$ is fully irreducible relative to $\ffa$.   
\end{thmB}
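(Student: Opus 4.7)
The plan is to prove the two directions separately. The forward implication is immediate: if $\oo$ were not fully irreducible relative to $\ffa$, some power $\oo^k$ would preserve a free factor system $\ffa'$ strictly containing $\ffa$, and any $[A'] \in \ffa' \setminus \ffa$ would give a vertex of $\rfa$ fixed by $\oo^k$. Then $\oo$ has a bounded orbit on $\rfa$, contradicting loxodromicity.

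For the reverse direction, I would assume $\oo$ is fully irreducible relative to $\ffa$ and take Theorem A as the main input: $\oo$ acts on $\mrc$ with uniform north-south dynamics, with attracting and repelling fixed points $[\StableCurrent]$ and $[\UnstableCurrent]$. Since $\ffa$ is non-exceptional, $\rfa$ is Gromov-hyperbolic by Handel-Mosher, so it suffices to produce positive asymptotic translation length for $\oo$. The strategy is to couple the two spaces and run a ping-pong argument on $\rfa$. Concretely I would: (i) build a coarsely $\oo$-equivariant assignment from vertices of $\rfa$ to subsets of $\mrc$, sending a free factor $[A']$ (not in $\ffa$) to projective currents supported on conjugacy classes carried by $A'$ but not by $\ffa$; (ii) show that $\StableCurrent$ and $\UnstableCurrent$ are \emph{filling relative to $\ffa$}, in the sense that their supports are not carried by any free factor system strictly between $\ffa$ and $\{[\free]\}$; (iii) combine (i), (ii), and the north-south dynamics from Theorem A to show that for any vertex $[A'] \in \rfa$, the $\oo^n$-orbit enters and stays deep inside any chosen neighborhood of $[\StableCurrent]$, while $\oo^{-n}$ does the same for $[\UnstableCurrent]$, yielding a quasi-geodesic orbit in $\rfa$.

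For step (ii), the argument should proceed by contradiction: if $\StableCurrent$ were carried by some $\ffa'$ properly between $\ffa$ and $\{[\free]\}$, then using the compactness of $\prc$ and the uniform convergence granted by Theorem A, a subsequential limit of the iterates $\oo^n(\ffa')$ would give an $\oo$-periodic relative free factor system strictly containing $\ffa$, contradicting full irreducibility relative to $\ffa$. The main obstacle is making precise the ``carried by'' relationship between free factor systems and projective relative currents and verifying that it is continuous enough to transport limits in $\mrc$ — this is exactly where the machinery developed in this paper (the definition of $\mrc \subset \prc$ as a compact $\oo$-invariant subspace on which orbits of non-peripheral conjugacy classes accumulate, together with the density and extension results of Section~\ref{sec:ExtensionRC}) is essential. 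Once (i) and (ii) are in place, step (iii) is a standard hyperbolic-space ping-pong that promotes north-south dynamics on a coupled compactum to a quasi-axis and hence positive translation length on $\rfa$.
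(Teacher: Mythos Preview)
The paper does not contain a proof of Theorem~B. The theorem is stated in the introduction as the main application of the machinery developed here, but its proof is explicitly attributed to the companion paper \cite{G:Loxodromic}; nothing further is said about it in the body of the present paper. There is therefore no proof here against which to compare your attempt.

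That said, a few comments on your sketch. The forward direction is correct and standard. For the reverse direction, your strategy of feeding Theorem~A into a ping-pong on $\rfa$ is plausible in spirit, but the crucial linking steps are not available in this paper and your outline glosses over real difficulties. In step~(i), the assignment $[A'] \mapsto \{\text{currents carried by } A'\}$ is not defined here, nor is any coarse Lipschitz or equivariance property established; producing such a map (or, more commonly, going the other way via a relative intersection or length pairing between currents and trees) is a substantial piece of work that belongs to \cite{G:Loxodromic}, not to this paper. In step~(ii), your contradiction argument is not well-formed: you speak of taking a subsequential limit of $\oo^n(\ffa')$ inside $\prc$, but free factor systems are not points of $\prc$, and no continuous embedding of free factor systems into $\mrc$ has been constructed. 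The filling property of $\StableCurrent$ is true and is what one needs, but it requires an argument about the support of the stable lamination (e.g., via the Bestvina--Feighn--Handel theory of attracting laminations) rather than a compactness argument in $\prc$. Step~(iii) is fine once (i) and (ii) are genuinely in place.

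In short: your proposal cannot be compared to the paper's proof because there is none here; as an independent outline it points in a reasonable direction but leaves the two substantive bridges (between $\rfa$ and $\mrc$, and the filling property of the limiting currents) as unjustified assertions.
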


\subsection*{Plan of the paper} In Section 2, we review some basic definitions. In Section 3, we define relative currents. In Section 4, we state the main proposition (from Section~\ref{subsec:AppendixSubstitution}) about calculating frequencies of paths in a completely split train track representative of a relative fully irreducible outer automorphism. In Section 5, the stable and unstable relative currents are defined. In Section 6, we collect some lemmas about the legal and illegal turn structure of a conjugacy class under iterates by a relative fully irreducible outer automorphism. We conclude with the proof of the main theorem in Section 7. In the appendix we talk about substitution dynamics and extending relative currents to signed measured currents. 

\subsection*{Acknowledgments} I am grateful to my Ph.D. advisor Mladen Bestvina for his guidance, support and endless patience throughout this project. I would also like to thank Gilbert Levitt for pointing out an error in an earlier version, Thomas Goller for discussions about substitution dynamics, and Derrick Wigglesworth and Jon Chaika for helpful conversations. I gratefully acknowledge Mathematical Science Research Institute, Berkeley, California, where the final stage of the present work was completed during Fall 2016. I would also like to thank the referee for helpful suggestions.   
\section{Preliminaries}
\subsection{Marked graphs and topological representatives} We review some terminology from \cite{BH:TrainTracks}. 
Identify $\free$ with $\pi_1(\mathcal{R}, \ast)$ where $\mathcal{R}$ is a rose with $\rank$ petals and $\rank$ is the rank of $\free$. A \emph{marked graph} $G$ is a graph of rank $\rank$, all of whose vertices have valence at least two, equipped with a homotopy equivalence $m : \mathcal{R}\to G$ called a marking. The marking determines an identification of $\free$ with $\pi_1(G,m(\ast))$. 

A homotopy equivalence $\phi: G \to G$ induces an outer automorphism of $\pi_1(G)$ and hence an element $\oo$ of $\op{Out}(\free)$. If $\phi$ sends vertices to vertices and the restriction of $\phi$ to edges is an immersion, then we say that $\phi$ is a \emph{topological representative} of $\oo$. 

A \emph{filtration} for a topological representative $\phi: G \to G$ is an increasing sequence of (not necessarily connected) $\phi$-invariant subgraphs $\fltr$. The closure of $G_r \setminus G_{r-1}$, denoted $H_r$ is a subgraph called the \emph{$r^{th}$-stratum}. 

Let $\gamma$ be a reduced path in $G$. Then  $\phi(\gamma)$ is the image of $\gamma$ under the map $\phi$. The tightened image of $\phi(\gamma)$ is denoted by $[\phi(\gamma)]$.  
 
A path $\sigma$ is a \emph{periodic Nielsen path} if the end points of $\sigma$ are fixed and $\phi^k(\sigma)$ is homotopic relative end points to $\sigma$, for some $k\geq 1$; the smallest such $k$ is called the \emph{period} of $\sigma$. When $k=1$ then $\sigma$ is simply called a \emph{Nielsen path}. A periodic Nielsen path is \emph{indivisible}, denoted as INP, if it does not decompose as a concatenation of non-trivial Nielsen subpaths. A path $\sigma$ is a \emph{pre-Nielsen} path if $\phi^k(\sigma)$ is a Nielsen path for some $k > 0$.  

\subsection{Train track maps}
We recall some more definitions from \cite{BH:TrainTracks}. 

Let $G$ be a marked graph. A \emph{turn} in $G$ is a pair of oriented edges of $G$ originating at a common vertex. A turn is non-degenerate if the edges are distinct, it is degenerate otherwise. A \emph{turn $(e_1, e_2)$ is contained in a filtration element} $G_r$ if both $e_1$ and $e_2$ are contained in $G_r$. If $\gamma$ is an edge path given by $e_1 e_2 \ldots e_{m-1} e_m$, then we say that $\gamma$ \emph{contains the turn} $(\overline{e_{i-1}}, e_i)$, where $\ovrl{e_i}$ denotes opposite orientation.   

For $\phi: G \to G$ a topological representative and an edge $e$ in $G$, set $T\phi(e)$ equal to the first oriented edge of the edge path $\phi(e)$. Given a turn $(e_1, e_2)$, define $T\phi(e_1, e_2) = (T\phi(e_1), T\phi(e_2))$.  
A turn is called \emph{illegal} if under some iterate of $T\phi$ the turn maps to a degenerate turn, it is \emph{legal} otherwise. A path $\gamma$ is called \emph{r-legal} if all of its illegal turns are contained in $G_{r-1}$. 

A matrix called \emph{transition matrix}, denoted $M_r$, is associated to each stratum $H_r$. The $ij^{th}$ entry of $M_r$ is the number of occurrences of the $i^{th}$ edge of $H_r$ in either orientation in the image of the $j^{th}$ edge under $\phi$. A non-negative matrix $M$ is called  \emph{irreducible} if for every $i, j$ there exists $k(i,j)>$ such that $ij^{th}$ entry of $M^k$ is positive. A matrix is called \emph{primitive} or \emph{aperiodic} if there exists $k>0$ such that $M^k$ is positive. A stratum is called \emph{zero stratum} if the transition matrix is the zero matrix. If $M_r$ is irreducible, then its Perron-Frobenius eigenvalue $\lambda_r$ is greater than equal to 1. Such a stratum is \emph{exponentially growing (EG)} if $\lambda_r >1$, it is called \emph{non-exponentially growing (NEG)} otherwise.

A topological representative $\phi : G \to G$ of a free group outer automorphism $\oo$ is a \emph{relative train track map} with respect to a filtration $\fltr$ if $G$ has no valence one vertices, if each non-zero stratum has an irreducible matrix and if each exponentially growing stratum satisfies the following conditions: 
\begin{itemize}
 \item If $E$ is an edge in $H_r$, then the first and the last edges in $\phi(E)$ are also in $H_r$. 
\item If $\gamma \in G_{r-1}$ is a non-trivial path with endpoints in $H_r \cap G_{r-1}$, then $[\phi(\gamma)]$ is non-trivial with endpoints in $H_r \cap G_{r-1}$. 
\item For each $r$-legal path $\beta \subset H_r$, $[\phi(\beta)]$ is $r$-legal.  
\end{itemize}

A reduced path $\sigma \subset G$ has \emph{height} $r$ if the highest stratum it crosses is $G_r$. 
\subsection{Completely split train track maps (CT)}
In \cite{FH:RecognitionTheorem}, Feighn and Handel defined completely split train track maps for outer automorphisms, which are better versions of relative train track maps. Instead of giving a complete definition, we list some facts which are used in this paper and then describe a complete splitting. Let $\phi: G \to G$ be a completely split train track map. 
The following facts proved in different papers can be found in \cite[Section 1.5.2]{HM:ResearchAnnouncement}.
\begin{fact}\label{F:Facts about CTs}
\begin{enumerate}
\item Every periodic Nielsen path has period one. 
\item If $H_r$ is an EG stratum, then there is at most one indivisible Nielsen path (INP) in $G_r$ that intersects $H_r$ nontrivially. 
\item If $H_r$ is an EG stratum and if $\rho_r$ is an INP of height $r$, then $\rho_r$ crosses each edge of $H_r$ at least once, the initial oriented edges of $\rho_r$ and $\overline{\rho}_r$ are distinct oriented edges of $H_r$, and:
\begin{enumerate}
\item $\rho_r$ is not closed if and only of it crosses some edge of $H_r$ exactly once and in this case:
\begin{enumerate}
\item at least one end point of $\rho$ is not in $G_{r-1}$. 
\item there does not exist a height $r$ fixed conjugacy class.
\end{enumerate}
\item $\rho_r$ is closed if and only if it crosses each edge of $H_r$ exactly twice, and in this case:
\begin{enumerate}
\item the endpoint of $\rho_r$ is not in $G_{r-1}$. 
\item the only height $r$ fixed conjugacy classes are those represented by $\rho_r$, its inverse and their iterates. 
\end{enumerate}
\end{enumerate}
\end{enumerate}
\end{fact}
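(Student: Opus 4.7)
My plan would be to simply cite these from \cite{BH:TrainTracks} and \cite{FH:RecognitionTheorem} (as packaged in \cite{HM:ResearchAnnouncement}), since they are built directly into the defining axioms of a completely split train track map. That said, here is how I would argue each piece from first principles.

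For (1), I would invoke the rotationless condition, which is part of the CT definition: informally it says no periodic direction, periodic Nielsen path, or height-$r$ periodic conjugacy class is permuted nontrivially by $\phi$. Since every $\oo \in \op{Out}(\free)$ admits a rotationless power and CTs are only defined for rotationless representatives, this hypothesis is automatic, and it directly forces every periodic Nielsen path to have period one. For (2), I would argue that any INP of height $r$ has a unique illegal turn at some vertex $v \in H_r$ and decomposes as $\rho = \alpha\bar\beta$, where $\alpha, \beta$ are legal paths in $G_r$ starting at $v$. Each of $\alpha, \beta$ is forced to be an initial segment of an eigenray for $T\phi$ based at $v$; the rotationless hypothesis together with the EG structure of $H_r$ constrains the admissible illegal turn uniquely up to swapping $\alpha \leftrightarrow \beta$, which only accounts for $\rho$ and $\bar\rho$.

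For (3), the combinatorial dichotomy would come from comparing how often each $H_r$ edge appears in $\alpha$ versus $\beta$. Since $[\phi(\rho_r)] = \rho_r$ and the transition matrix $M_r$ is Perron--Frobenius irreducible, a counting argument on the images $[\phi(\alpha)]$ and $[\phi(\beta)]$ shows each $H_r$ edge is crossed exactly once or exactly twice by $\rho_r$. In the once-case, $\alpha$ and $\beta$ must terminate at distinct vertices, so $\rho_r$ is not closed; in the twice-case they terminate at the same vertex and $\rho_r$ is a loop representing a height $r$ fixed conjugacy class. The endpoint placements outside $G_{r-1}$ would follow because a height $r$ INP with all endpoints in $G_{r-1}$ would close up into such a fixed class, and uniqueness from (2) then identifies the height $r$ fixed conjugacy classes with iterates of $\rho_r$ and $\bar\rho_r$.

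The main obstacle to executing this plan from scratch, rather than citing it, is carefully aligning the rotationless hypothesis with the Perron--Frobenius bookkeeping for INPs: one has to track simultaneously the linear growth of NEG edges (and exceptional paths), the exponential growth of EG strata, and the rigid combinatorics of eigenrays at branch points. This is precisely the work done in \cite{FH:RecognitionTheorem}, and since the paper here only needs the output, quoting Facts 2.1 seems clearly preferable to reproving them.
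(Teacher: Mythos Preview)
Your proposal is correct and matches the paper's approach exactly: the paper does not prove Facts~\ref{F:Facts about CTs} at all but simply records them as quoted results, pointing to \cite[Section 1.5.2]{HM:ResearchAnnouncement} for references to the original proofs in \cite{BH:TrainTracks} and \cite{FH:RecognitionTheorem}. Your additional sketch of how one might argue from first principles is reasonable as an outline, but---as you yourself conclude---is unnecessary here, and the paper makes the same choice.
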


If $H_r$ is an EG stratum, then a non-trivial path in $G_{r-1}$ with end points in $H_r\cap G_{r-1}$ is called a \emph{connecting path}. If an NEG stratum $H_i$ is a single edge $e_i$ such that $\phi(e_i) = e_i u_i$ for a non-trivial closed Nielsen path $u_i$, then $e_i$ is called a \emph{linear edge}. Let $u_i = w_i^{d_i}$ for some $d_i \neq 0$ where $w_i$ is root-free. If $e_i$ and $e_j$ are distinct linear edges such that $\phi(e_i) = e_i w^{d_i}$ and $\phi(e_j) = e_j w^{d_j}$ where $d_i \neq d_j$ and $d_i, d_j>0$, then a path of the form $e_i w^p \overline{e_j}$, where $p \in \mathbb{Z}$, is called an \emph{exceptional path}. 


A decomposition of a path or a circuit $\sigma$ into subpaths is a called a \emph{splitting} if one can tighten the image of $\sigma$ under $\phi$ by tightening the image of each subpath. In other words, there is no cancellation between images of two adjacent subpaths in the decomposition of $\sigma$. 

Let $e$ be an edge in an irreducible stratum $H_r$ and let $k>0$. A maximal subpath $\sigma$ of $[\phi^k(e)]$ in a zero stratum $H_i$ is said to be \emph{$r$-taken}. A non-trivial path or circuit in $G$ is said to be \emph{completely split} if it has a splitting into subpaths each of which is either a single edge in an irreducible stratum, an indivisible Nielsen path, an exceptional path or a connecting path in a zero stratum $H_i$ that is taken and is maximal in $H_i$.   
 
A relative train track map is \emph{completely split} if for every edge $e$ in each irreducible stratum $\phi(e)$ is completely split and if $\sigma$ is a taken connecting path in a zero stratum, then $[\phi(\sigma)]$ is completely split.

\subsection{Free factor system}

A free factor system of $\free$ is a finite collection of proper free factors of $\free$ of the form $\ffa = \{[A_1], \ldots, [A_k] \}$, $k \geq 0$ such that there exists a free factorization $\free = A_1 \ast \cdots \ast A_k \ast F_N$, where $[\cdot]$ denotes the conjugacy class of a subgroup. The free factor $F_N$ is referred to as the \emph{cofactor} of $\ffa$, keeping in mind that it is not unique, even up to conjugacy. There is a partial ordering $\sqsubset$ on the set of free factor systems given as follows: $\ffa \sqsubset \ffa^{\prime}$ if for every $[A_i] \in \ffa$ there exists $[A_j^{\prime}] \in \ffa^{\prime}$ such that $A_i \subset A_j^{\prime}$ up to conjugation. The free factor systems $\emptyset$ and $\{[\free]\}$ are called \emph{trivial free factor systems}. Define \emph{rank}$\mathbf{(\ffa)}$ to be the sum of the ranks of the conjugacy classes of free factors in $\ffa$ and let $\zeta(\ffa) = k+N$.  
   
The main geometric example of a free factor system is as follows: suppose $G$ is a marked graph and $K$ is a subgraph whose non-contractible connected components are denoted $C_1, \ldots, C_k$. Let $[A_i]$ be the conjugacy class of a free factor of $\free$ determined by $\pi_1(C_i)$. Then $\ffa =\{ [A_1], \ldots, [A_k]\}$ is a free factor system. We say $\ffa$ is \emph{realized by $K$} and denote it by $\mathcal{F}(K)$. 

\subsection{Relative free factor complex}\label{subsec:Relative free factor complex}
Let $\ffa$ be a non-trivial free factor system of $\free$. In \cite{HM:RelativeComplex}, the complex of free factor systems of $\free$ relative to  $\ffa$, denoted $\mathcal{FF}(\free; \ffa)$, is defined to be the geometric realization of the partial ordering $\sqsubset$ restricted to the set of non-trivial free factor systems $\mathcal{B}$ of $\free$ such that $\ffa \sqsubset \mathcal{B}$ and $\ffa \neq \mathcal{B}$. The \emph{exceptional} free factor systems are certain ones for which $\rfa$ is either empty or zero-dimensional. They can be enumerated as follows: 
\begin{itemize}
 \item $\ffa = \{ [A_1], [A_2] \}$ with $\free = A_1 \ast A_2$. In this case $\rfa$ is empty. 
\item $\ffa = \{[A]\}$ with $\free = A \ast \mathbb{Z}$. In this case $\rfa$ is $0$-dimensional. 
\item $\ffa = \{ [A_1], [A_2], [A_3]\}$ with $\free =A_1 \ast A_2\ast A_3$. In this case $\rfa$ is also $0$-dimensional. 
\end{itemize}

\begin{theorem}[\cite{HM:RelativeComplex}]
For any non-exceptional free factor system $\ffa$ of $\free$, the complex $\rfa$ is positive dimensional, connected and hyperbolic. 
\end{theorem}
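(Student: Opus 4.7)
The plan is to adapt the Bestvina--Feighn proof of hyperbolicity of $\FreeF$ to the relative setting, using the \emph{relative outer space} $\mathcal{O}(\free;\ffa)$ (marked graphs of groups with vertex groups realizing $\ffa$) as the geometric model, in place of ordinary Culler--Vogtmann outer space. Throughout, one uses the hypothesis that $\ffa$ is non-exceptional to guarantee that there is enough ``room'' in the cofactor and in the number of components of $\ffa$ to perform the combinatorial constructions that follow.

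For positive dimensionality and connectedness, I would first exhibit, for each non-exceptional $\ffa$, an explicit chain $\ffa \sqsubsetneq \mathcal{B}_1 \sqsubsetneq \mathcal{B}_2 \sqsubsetneq \{[\free]\}$. This amounts to a case check: if the cofactor $F_N$ has rank $\geq 2$, split off a $\mathbb{Z}$-factor from $F_N$ to get $\mathcal{B}_1$; if $\zeta(\ffa)\geq 4$ with $F_N$ of rank $\leq 1$, amalgamate pairs of components of $\ffa$ (or a component with the $\mathbb{Z}$ cofactor). The exceptional cases are precisely the ones where neither move is available. For connectedness, I would work in a realization on a marked graph of groups: any two vertices of $\rfa$ can be simultaneously realized as free factor systems $\mathcal{F}(K)$, $\mathcal{F}(K')$ on a common graph $G$ representing $\free$ relative to $\ffa$, and one can interpolate by a sequence of elementary collapse/blow-up moves, each of which changes the free factor system by a step comparable under $\sqsubset$. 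The non-exceptional hypothesis is exactly what is needed so that one never gets trapped in a configuration where the only available moves leave the set of non-trivial $\mathcal{B}$ with $\ffa\sqsubset\mathcal{B}$.

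The main obstacle is hyperbolicity. My plan is to use the Masur--Minsky guessing geodesics criterion: construct, for any pair $\mathcal{B},\mathcal{B}'$ of vertices, a ``coarse geodesic'' $\gamma(\mathcal{B},\mathcal{B}')$ in $\rfa$, and verify (a) each $\gamma$ is a uniform quasi-geodesic, (b) for any triple the three $\gamma$'s form a uniformly slim triangle, (c) the coarse geodesics are coarsely symmetric. The candidate geodesic is produced by choosing a \emph{relative folding path} $G_t$ in $\mathcal{O}(\free;\ffa)$ connecting points that realize $\mathcal{B}$ and $\mathcal{B}'$, and then reading off, at each time $t$, a free factor system $\mathcal{F}(K_t)$ carried by an appropriate subgraph $K_t\subset G_t$ of bounded combinatorial complexity (e.g.\ a ``lowest stratum'' in the folding). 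Lipschitz-ness of $t\mapsto \mathcal{F}(K_t)$ in $\rfa$ follows from the fact that a single fold changes $K_t$ by a bounded amount. The slim-triangles property requires the key lemma, in Bestvina--Feighn style: the projection of a folding path $\gamma(\mathcal{B},\mathcal{B}'')$ to the coarse geodesic $\gamma(\mathcal{B},\mathcal{B}')$ is coarsely constant outside a bounded neighborhood of the endpoints.

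The hardest step is this last projection lemma. Here I would adapt the \emph{projection to a free factor} technology of Bestvina--Feighn: for a vertex $\mathcal{B}\in\rfa$, define a projection $\pi_{\mathcal{B}}$ to subgraphs of a relative train track representative realizing $\mathcal{B}$, and show that it is coarsely Lipschitz along folding paths, using the relative train track machinery together with the fact that zero strata, NEG strata, and exceptional paths (the extra phenomena in the relative setting, mentioned already in the introduction) contribute only bounded amounts to the projection. One also has to verify a ``bounded geodesic image'' property, for which the crucial input is that along a relative folding path, once some subgraph realizes a free factor system comparable with $\mathcal{B}$, it continues to do so until a definite fold occurs. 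With these combinatorial inputs in hand, the Masur--Minsky axioms apply and yield hyperbolicity of $\rfa$.
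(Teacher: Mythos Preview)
The paper does not prove this theorem: it is stated purely as a citation of Handel--Mosher \cite{HM:RelativeComplex}, with no argument given. There is therefore no proof in the paper to compare your proposal against.

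As a standalone sketch, your outline is in the right spirit but is not the route Handel--Mosher actually take. Their argument does not adapt the Bestvina--Feighn folding-path-in-outer-space proof directly to $\rfa$; instead they first prove hyperbolicity of the \emph{relative free splitting complex} $\mathcal{FS}(\free;\ffa)$ (extending their earlier proof for the absolute free splitting complex, which uses combinatorial fold paths and the Masur--Minsky axioms), and then deduce hyperbolicity of $\rfa$ from a coarsely Lipschitz, coarsely surjective map $\mathcal{FS}(\free;\ffa)\to\rfa$ along which fold paths project to unparametrized quasigeodesics, invoking a criterion of Kapovich--Rafi. Your proposed ``projection to a free factor'' lemma and the accompanying bounded-geodesic-image statement are the analogues of substantial theorems in the absolute case, and your sketch does not indicate how the genuinely new difficulties in the relative setting (vertex groups, the failure of properness of the action, the more delicate stratum structure) are handled; as written it is a plan rather than a proof. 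If you want to supply an argument here, the cleanest option is simply to cite \cite{HM:RelativeComplex}, as the paper does.
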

\subsection{Fully irreducible relative to $\ffa$}
Let $\ffa$ be a non-trivial free factor system. An outer automorphism $\oo \in \relOut$ is called \emph{irreducible relative to} $\ffa$ if there is no non-trivial $\oo$-invariant free factor system that properly contains $\ffa$. If every power of $\oo$ is irreducible relative to $\ffa$, then we say that $\oo$ is \emph{fully irreducible relative to} $\ffa$ (or relative fully irreducible).  

Let $\oo \in \relOut$. Then by Lemma 2.6.7 \cite{BFH:Tits}, there exists a relative train track map for $\oo$, denoted $\phi : G \to G$, and filtration $\Rfltr$ such that $\ffa = \mathcal{F}(G_s)$ for some filtration element $G_s$. If $\oo$ is fully irreducible relative to $\ffa$, then $\ffa = \mathcal{F}(G_{r-1})$ and the top stratum $H_r$ is an EG stratum with Perron-Frobenius eigenvalue $\PFevalue>1$. 
\subsection{Bounded cancellation constant and critical length}\label{subsec:BCC}
\begin{lemma}[\cite{C:BCC}]
Let $G$ be a marked metric graph and let $\phi : G\to G$ be a homotopy equivalence. There exists a constant $BCC(\phi)$, called the bounded cancellation constant, depending only on $\phi$ such that for any path $\rho$ in $G$ obtained by concatenating two paths $\alpha, \beta$, we have  
$$ L(\phi(\rho)) \geq L(\phi(\alpha)) + L(\phi(\beta)) - BCC(\phi)$$ where $L$ is the length function on $G$. 
\end{lemma}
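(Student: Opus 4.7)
The plan is to pass to the universal cover $\tilde G$, which is a tree, and to lift $\phi$ to an $\free$-equivariant map $\tilde\phi\colon\tilde G\to\tilde G$. Lift $\rho=\alpha\cdot\beta$ to a path $\tilde\rho=\tilde\alpha\cdot\tilde\beta$ meeting at a point $x$, and interpret $L$ as the length after tightening (without this convention the inequality is vacuous). Under this interpretation the quantity
\[
c := \tfrac{1}{2}\bigl(L(\phi(\alpha))+L(\phi(\beta))-L(\phi(\rho))\bigr)
\]
is precisely the length of the common back-tracking segment of the tightenings $[\tilde\phi(\tilde\alpha)]$ and $[\tilde\phi(\tilde\beta)]$; equivalently, it is the tree-distance from $\tilde\phi(x)$ to the unique geodesic in $\tilde G$ joining the endpoints of $\tilde\phi(\tilde\rho)$. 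The task reduces to bounding $c$ uniformly in $\rho,\alpha,\beta$ by a constant that depends only on $\phi$.

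To produce such a bound, I would exploit the fact that $\phi$ is a homotopy equivalence. Choose a homotopy inverse $\psi\colon G\to G$ and compatible lifts so that $\tilde\psi\tilde\phi$ is $\free$-equivariantly homotopic to the identity of $\tilde G$. Cocompactness of the $\free$-action forces the displacement of this homotopy to be bounded by some $D=D(\phi,\psi)<\infty$, so $d(\tilde\psi\tilde\phi(z),z)\le D$ for every $z\in\tilde G$. Set $K:=\max_{e\in E(G)} L(\phi(e))$ after subdividing $G$ once and for all so that $\phi$ is simplicial; then $\tilde\phi$ is $K$-Lipschitz.

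Now let $q$ be the tip of the cancelling segment, at distance $c$ from $\tilde\phi(x)$ along both image tails. Since $q$ lies on the edge-path $\tilde\phi(\tilde\alpha)$, there is $z_1\in\tilde\alpha$ with $\tilde\phi(z_1)=q$; symmetrically find $z_2\in\tilde\beta$ with $\tilde\phi(z_2)=q$. Applying $\tilde\psi$ gives $d(z_i,\tilde\psi(q))\le D$, whence $d(z_1,z_2)\le 2D$. Because $\tilde G$ is a tree and $\tilde\rho$ is reduced, the geodesic from $z_1$ to $z_2$ is forced to pass through $x$, so $d(z_1,x)\le 2D$; invoking the Lipschitz bound,
\[
c = d(\tilde\phi(x),\tilde\phi(z_1)) \le K\,d(x,z_1) \le 2KD.
\]
Setting $BCC(\phi):=4KD$ then yields the lemma.

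The main technical point is the preimage step in the last paragraph: one needs $q$ to genuinely lie in the images $\tilde\phi(\tilde\alpha)$ and $\tilde\phi(\tilde\beta)$. This is exactly what the simplicial reduction at the start buys us; without it, one instead picks $z_1,z_2$ to be vertices of $\tilde\alpha,\tilde\beta$ whose images are within one edge-length of $q$ and absorbs the resulting additive $O(1)$ into $BCC(\phi)$. Everything else — reinterpreting cancellation as the distance to a tree geodesic, obtaining bounded displacement from the homotopy inverse, and the Lipschitz property of $\tilde\phi$ — is a formal consequence of working in the universal cover.
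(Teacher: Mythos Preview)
The paper does not provide a proof of this lemma; it is simply quoted from Cooper \cite{C:BCC} and used as a black box in Section~\ref{subsec:BCC}. So there is no proof in the paper to compare against.

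Your argument is the standard one and is essentially correct. A couple of small points worth tightening: the Lipschitz constant should be $K=\max_e L(\phi(e))/L(e)$ rather than $\max_e L(\phi(e))$ unless all edges have unit length, and the ``simplicial'' subdivision is not really needed---the image $\tilde\phi(\tilde\alpha)$ is a continuous path in the tree from $\tilde\phi(a_0)$ to $\tilde\phi(x)$, and any such path necessarily surjects onto the geodesic $[\tilde\phi(a_0),\tilde\phi(x)]$, which contains the median point $q$ by the tripod description of three points in a tree. With those cosmetic fixes the bound $c\le 2KD$ and hence $BCC(\phi)=4KD$ goes through exactly as you wrote.
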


Let $BCC(\phi)$ be the bounded cancellation constant for $\phi : G \to G$, a relative train track representative of a relative fully irreducible outer automorphism $\oo$ with top EG stratum $H_r$. The transition matrix of $H_r$ has a positive eigenvector whose smallest entry is one. For an edge $e_i$ in $H_r$, the eigenvector has an entry $v_i>0$. Assign a metric to $G$ such that each edge $e_i$ in $H_r$ is isometric to an interval of length $v_i$ and all edges in $G_{r-1}$ have length one. Then the $r$-length of edges in $H_r$ gets stretched by the PF eigenvalue $\PFevalue$ under $\phi$. Let $l_r$ denote the $r$-length. Let $\alpha, \beta, \gamma$ be $r$-legal paths in $G$. Let $\alpha.\beta.\gamma$ be the path obtained by concatenating these $r$-legal paths. The only $r$-illegal turns possibly occur at the ends of the segments of $\beta$. Thus if $\PFevalue l_r(\beta) -2BCC(\phi) > l_r(\beta)$, then iterations and tightening of $\alpha.\beta.\gamma$ will produce paths with $r$-length of legal segments corresponding to $\beta$ going to infinity. The constant $\frac{2BCC(\phi)}{\PFevalue-1}$ is called the \emph{critical $r$-length} for $\phi$. 

\section{Relative currents}
The goal of this section is to define the space of currents relative to a free factor system. 

\subsection{Boundary of $\free$}\label{subsec:BoundaryFn}
Given $\free$ and a fixed basis $\mathfrak{B}$ of $\free$, let $\op{Cay}(\free, \mathfrak{B})$ be the Cayley graph of $\free$ with respect to $\mathfrak{B}$. The space of ends of the Cayley graph is called the \emph{boundary of} $\free$, denoted by $\partial \free$. It is homeomorphic to the Cantor set. A one-sided cylinder set determined by a finite path $\gamma$ starting at the base point is the set of all rays starting at the base point that cross $\gamma$. Such cylinder sets form a basis for the topology on $\partial \free$ and are in fact both open and closed.  

Let $\Delta$ denote the diagonal in $\partial \free \times \partial \free$. Let \emph{$\partial^2 \free := (\partial \free \times \partial \free - \Delta) / \mathbb{Z}_2$ be the space of unoriented bi-infinite geodesics} in $\op{Cay}(\free, \mathfrak{B})$. This space is also called the \emph{double boundary} of $\free$. 
Finite paths $\gamma$ in $\op{Cay}(\free, \mathfrak{B})$ determine two-sided cylinder sets, denoted $C(\gamma)$, which form a basis for the topology of $\partial^2 \free$. Two-sided cylinder sets are open and compact and hence closed. Compact open sets are given by finite disjoint union of cylinder sets. Also $\partial^2 \free$ is locally compact but not compact. The action of $\free$ on $\partial^2 \free$ is cocompact. 

Let $\ffa = \{[A_1], \ldots, [A_k]\}$, $k>0$, be a free factor system such that $\zeta(\ffa)\geq 3$. 

\begin{definition}[Relative basis] \label{def:RelativeBasis} Choose representatives $A_1, \ldots, A_k$ of conjugacy classes of free factors in $\ffa$ such that $\free = A_1 \ast \ldots \ast A_k \ast F_N$. Let $\RelativeBasis$ be a basis of $\free$ such that a basis of each $A_i$ is a subset of $\RelativeBasis$. Specifically, $$\RelativeBasis = \{a_{11}, \ldots a_{11_s}, \ldots , a_{i1}, \ldots, a_{ii_s}, \ldots, a_{k1}, \ldots, a_{kk_s}, b_1, \ldots, b_p\}$$ where for a fixed $i$, the set $\{a_{ij}\}_{j=1}^{i_s}$ is a basis for $A_i$ and the set $\{b_j\}_{j=1}^{N}$ is a basis for $F_N$. Define a set $B_{\ffa}$ to be the collection of all words $a_{ij}^{\pm}a_{kl}^{\pm}$ of length two such that $i \neq k$ and all the $b_j$s. Note that if $\op{rank}(\ffa) = \op{rank}(\free)$, then the set of $b_i$s is empty. We call $\RelativeBasis$ a \emph{relative basis} of $\free$. 
\end{definition}

\begin{definition}[Double boundary of $\ffa$]\label{def:DBoundaryA} Given a free factor $A$, define $\partial^2 A$ to be the set of unoriented bi-infinite geodesics in $\partial^2 \free$ in the closure of lifts of conjugacy classes of elements in $A$. Then define the double boundary of $\ffa$ as $\partial^2 \ffa := \bigsqcup_{i=1}^k \partial^2 A_i$.      
\end{definition}


Let $\mathbf{Y} = \partial^2 \free \setminus \partial^2 \ffa$. It inherits the subspace topology, denoted $\tau$, from $\partial^2\free$. It can also be given a topology, denoted $\tau'$, where cylinder sets in $\mathbf{Y}$ determined by finite paths that cross at least one word in $B_{\ffa}$ form a basis for the topology. The two topologies are in fact equivalent. Indeed, for every $y \in \mathbf{Y}$ and every basis element $C(\gamma) \cap \mathbf{Y}$ of $\tau$ containing $y$ there is a basis element $C(\gamma')$ of $\tau'$ containing $y$ such that $C(\gamma') \subset C(\gamma) \cap \mathbf{Y}$, where $\gamma$ and $\gamma'$ are finite paths in $\op{Cay}(\free, \RelativeBasis)$, $\gamma \subseteq \gamma' \subset y$ and $\gamma'$ crosses at least one word in $B_{\ffa}$. On the other hand, for every $y \in \mathbf{Y}$, and every basis element $C(\gamma')$ of $\tau'$ containing $y$, the basis element $C(\gamma')\cap \mathbf{Y} \subset \partial^2 \free$ of $\tau$ containing $y$ is such that $ C(\gamma') \cap \mathbf{Y} \subseteq C(\gamma')$.  


\begin{lemma}$\mathbf{Y}$ is locally compact.  \end{lemma}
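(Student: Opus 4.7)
The plan is to exhibit an explicit compact open neighborhood of each $y\in \mathbf{Y}$. There is a quick abstract argument and a more concrete cylinder-set construction; I would present the concrete one, since it also produces a basis of compact opens, which is what the definition of relative currents in Section~\ref{subsec:Definition} needs. Abstractly, each $\partial^2 A_i$ is defined as a \emph{closure} in $\partial^2 \free$, so it is closed; hence $\partial^2 \ffa$, a finite union of closed sets, is closed, and $\mathbf{Y}$ is open in the locally compact Hausdorff space $\partial^2 \free$. Since an open subset of a locally compact Hausdorff space is locally compact, this already proves the lemma.

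For the concrete argument I would first establish the combinatorial characterization: a bi-infinite reduced word $z$ in $\op{Cay}(\free, \RelativeBasis)$ belongs to $\partial^2 \ffa$ if and only if $z$ contains no subword from the finite set $B_{\ffa}$. One direction is easy: if $z$ avoids $B_{\ffa}$, then it contains no letter $b_j$ and no transition between distinct $A_i$'s, so every letter of $z$ comes from a single factor $A_i$, and $z$ can be approximated in $\partial^2 \free$ by translates of axes of long cyclically reduced words in $A_i$, giving $z\in\partial^2 A_i$. For the other direction, translates of axes of elements of $A_i$ use only the letters $\{a_{ij}\}$ and so avoid $B_{\ffa}$; since the avoidance of a finite set of finite subwords is a closed condition on $\partial^2 \free$, it passes to the closure $\partial^2 A_i$.

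With this in hand, given $y\in\mathbf{Y}$ the characterization yields a subword $w\in B_{\ffa}$ appearing at some location in $y$. Choose a finite subpath $\gamma'\subset y$ containing this occurrence of $w$. Then $C(\gamma')$ is a compact open cylinder set in $\partial^2 \free$ containing $y$, and every geodesic in $C(\gamma')$ crosses $w$, hence lies in $\mathbf{Y}$; so $C(\gamma')\subset\mathbf{Y}$. Because the two topologies $\tau$ and $\tau'$ on $\mathbf{Y}$ coincide, $C(\gamma')$ is open in $\mathbf{Y}$ (it is a $\tau'$-basis element) and compact as a subset of $\mathbf{Y}$ (inherited from $\partial^2 \free$), giving the required compact open neighborhood.

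The main potential obstacle is the forward direction of the combinatorial characterization, namely showing that a bi-infinite reduced word supported on the letters of a single $A_i$ actually lies in the closure of lifts of $A_i$-conjugacy classes rather than only in the set of those lifts themselves. This amounts to an approximation argument: producing long cyclically reduced words of $A_i$ whose translated axes agree with $z$ on arbitrarily long finite subpaths around any chosen vertex. If this level of detail is undesirable for so basic a statement, the one-line openness argument above completely bypasses it.
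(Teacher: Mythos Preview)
Your concrete cylinder-set construction is exactly the paper's approach: for $x\in\mathbf{Y}$ it takes a finite subpath not contained in a single free factor $A_i$ and observes that the cylinder set determined by this subpath is a compact open neighborhood of $x$ in $\mathbf{Y}$. You supply more justification (the combinatorial characterization of $\partial^2\ffa$ and the verification that such a cylinder set lies in $\mathbf{Y}$) and an alternate abstract openness argument, neither of which the paper spells out.
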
 \begin{proof} A space is locally compact if every point has a compact neighborhood. Let $x$ be an element of $\mathbf{Y}$. Take a finite subpath of $x$ that cannot be written as a string of words contained in a single free factor $A$, where $[A] \in \ffa$, and consider the cylinder set determined by that path. Then this cylinder set is a compact open set in $\mathbf{Y}$ containing $x$.  \end{proof}
 
\begin{lemma}\label{L:CocompactAction} The action of $\free$ on $\mathbf{Y}$ is cocompact. \end{lemma}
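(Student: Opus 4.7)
The plan is to exhibit a compact subset $K \subset \mathbf{Y}$ meeting every $\free$-orbit. Since $B_{\ffa}$ is a finite set of finite words in the relative basis $\RelativeBasis$, I would take
\[ K = \bigcup_{w \in B_{\ffa}} C(\gamma_w), \]
where for each $w \in B_{\ffa}$, $\gamma_w$ is a fixed finite path in $\op{Cay}(\free, \RelativeBasis)$ labelled by $w$. Each two-sided cylinder $C(\gamma_w)$ is compact by the discussion in Section~3.1, so $K$ is compact as a finite union of compact sets.

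First I would verify that $K \subset \mathbf{Y}$. A bi-infinite geodesic crossing $\gamma_w$ contains the subword $w \in B_{\ffa}$, and no bi-infinite geodesic in any $\partial^2 A_i$ can contain such a subword: geodesics in $\partial^2 A_i$ are labelled (up to translation by $\free$) only by letters from $A_i$'s basis, which excludes the $b_j$'s and any cross-factor transition $a_{ij}^{\pm} a_{kl}^{\pm}$ with $i \neq k$. Since containing $w$ as a subword is $\free$-invariant, this rules out every translate, so $K \cap \partial^2 \ffa = \emptyset$.

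The main step is to show $\free \cdot K = \mathbf{Y}$. Given $x \in \mathbf{Y}$, written as a bi-infinite word in $\RelativeBasis$, I claim it must contain somewhere a finite subword lying in $B_{\ffa}$. If $x$ uses any letter $b_j$, this is immediate. Otherwise $x$ uses only letters of type $a_{ij}^{\pm}$; if all of them belonged to a single $A_i$ then both endpoints of $x$ would lie in $\partial A_i$ and $x$ would lie in $\partial^2 A_i$, contradicting $x \in \mathbf{Y}$. Thus a transition between distinct $A_i, A_k$ must appear in $x$, yielding a subword $w \in B_{\ffa}$. Translating by the $g \in \free$ that sends this occurrence onto $\gamma_w$ gives $g \cdot x \in C(\gamma_w) \subset K$.

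The only step requiring real justification is the characterization used in the previous paragraph: that a bi-infinite geodesic in $\op{Cay}(\free, \RelativeBasis)$ labelled entirely by letters from a single $A_i$ actually lies in $\partial^2 A_i$. This rests on convexity of the Cayley subgraph of the free factor $A_i$ inside $\op{Cay}(\free,\RelativeBasis)$ together with the fact that such geodesics are limits of axes of cyclically reduced elements of $A_i$. Once this is granted, the remainder of the argument is purely combinatorial.
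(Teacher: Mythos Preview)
Your proof is correct and follows essentially the same approach as the paper: take the finite union of cylinder sets determined by paths based at the origin and labelled by words in $B_{\ffa}$, and observe that every bi-infinite geodesic in $\mathbf{Y}$ can be translated to cross one of these paths. The paper's proof is considerably terser---it simply names this compact set and asserts the translation claim---whereas you have spelled out the verification that $K \subset \mathbf{Y}$ and the case analysis showing every $x \in \mathbf{Y}$ contains a $B_{\ffa}$-subword; your extra care about the characterization of $\partial^2 A_i$ is warranted but, as you note, reduces to convexity of the Cayley subtree of $A_i$ and the definition of $\partial^2 A_i$ as the closure of all lifts (hence all $\free$-translates) of axes.
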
 \begin{proof} 
Consider a compact set $C \subset \op{Cay}(\free, \RelativeBasis)$ given by a finite union of cylinder sets determined by all paths with one end point at the origin and with label a word in $B_{\ffa}$. For every bi-infinite  geodesic $\gamma$ in $\mathbf{Y}$ there is a $g \in \free$ such that $g \cdot \gamma$ crosses a path starting at the origin determined by a word in $B_{\ffa}$. 

\end{proof}
\subsection{Definition of relative current}\label{subsec:Definition}
We first recall the definition of a measured current as defined in \cite{Martin}. A \emph{measured current} is an additive, non-negative, $\free$-invariant and flip-invariant function on the set of compact open sets in $\partial^2 \free$. It is uniquely determined by its values on the cylinder sets given by elements in $\free$.

\begin{definition}
With respect to the basis $\RelativeBasis$, let $\free \setminus \ffa$ denote the set of all words in $\free$ that are not contained in any free factor $A_i$, for $1\leq i \leq k$. Note that $\free \setminus \ffa$ contains conjugates of words in $A_i$, as long as the conjugating elements are not in $A_i$.  
\end{definition}

\begin{definition} Let $[\free \setminus \ffa]$ be the set of all conjugacy classes of elements in $\free$ that are not contained in any conjugacy class of a free factor in $\ffa$. Note that an element of $\free \setminus \ffa$ can be contained in the free product of distinct free factors representing elements of $\ffa$. \end{definition}

Let $\mathcal{C}(\mathbf{Y})$ be the collection of compact open sets in $\mathbf{Y}$. A \emph{relative current} is an additive, non-negative, $\free$-invariant and flip-invariant function on $\mathcal{C}(\mathbf{Y})$. Let $\rc$ denote the space of relative currents. A subbasis for the topology of $\rc$ is given by the sets $\{\eta \in \rc: |\eta(C) - \eta_0(C)|\leq \epsilon \}$ where $\eta_0 \in \rc$, $C \in \mathcal{C}(\mathbf{Y})$ and $\epsilon >0$. 

Since $\partial^2 \ffa$ is invariant under the action of $\relOut$, the action of $\op{Out}(\free)$ on $\partial^2 \free$ restricts to the action of $\relOut$ on $\mathbf{Y}$. Thus, $\relOut$ also acts on $\mathcal{C}(\mathbf{Y})$. The group $\relOut$ acts on $\rc$ as follows: let $\eta \in \rc$, $\Psi \in \relOut$ and let $C \in \mathcal{C}(\mathbf{Y})$. Then $$ \Psi.\eta (C) = \eta(\Psi^{-1}(C)).$$

A relative current can also be defined as an $\free$-invariant, locally finite, inner regular measure (called Radon measure) on the $\sigma$-algebra of Borel sets of $\mathbf{Y}$. 

\begin{lemma}
A non-negative, additive function on $\mathcal{C}(\mathbf{Y})$ corresponds to a Radon measure on the Borel $\sigma$-algebra of $\mathbf{Y}$. \end{lemma}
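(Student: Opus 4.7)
The plan is to invoke the Carath\'eodory extension theorem. First I would verify that $\mathcal{C}(\mathbf{Y})$ is a Boolean ring of subsets of $\mathbf{Y}$: if $C, D \in \mathcal{C}(\mathbf{Y})$, then $C \cup D$, $C \cap D$, and $C \setminus D$ are all compact (as they are closed subsets of the compact set $C \cup D$) and open (as finite Boolean combinations of open sets), hence lie in $\mathcal{C}(\mathbf{Y})$. A non-negative additive set function $\mu : \mathcal{C}(\mathbf{Y}) \to \mathbb{R}_{\geq 0}$ is then a finitely additive pre-content on this ring.

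The key step is to upgrade finite additivity to countable additivity on $\mathcal{C}(\mathbf{Y})$. Suppose $C = \bigsqcup_{n=1}^\infty C_n$ with $C, C_n \in \mathcal{C}(\mathbf{Y})$ pairwise disjoint. Then $\{C_n\}$ is a cover of the compact set $C$ by open sets, so by compactness only finitely many $C_n$ are nonempty; say $C = \bigsqcup_{n=1}^N C_n$. Finite additivity then gives $\mu(C) = \sum_{n=1}^\infty \mu(C_n)$. Thus $\mu$ is a pre-measure on the ring $\mathcal{C}(\mathbf{Y})$, and the Carath\'eodory extension theorem produces a measure $\tilde\mu$ on the generated $\sigma$-algebra $\sigma(\mathcal{C}(\mathbf{Y}))$. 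Since cylinder sets (determined by finite paths in $\op{Cay}(\free, \RelativeBasis)$) form a countable basis of compact open sets for the topology on $\mathbf{Y}$, every open set is a countable union of elements of $\mathcal{C}(\mathbf{Y})$, so $\sigma(\mathcal{C}(\mathbf{Y}))$ is exactly the Borel $\sigma$-algebra of $\mathbf{Y}$.

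It remains to verify that $\tilde\mu$ is Radon. Local finiteness is immediate: every $y \in \mathbf{Y}$ has a compact open neighborhood $C \in \mathcal{C}(\mathbf{Y})$, on which $\tilde\mu(C) = \mu(C) < \infty$. For inner regularity, any open set $U$ can be written as a countable increasing union $U = \bigcup_k K_k$ of compact open sets (enumerate the countable basis contained in $U$ and take finite unions); continuity of measure then gives $\tilde\mu(U) = \sup_k \tilde\mu(K_k)$. A standard monotone-class argument, using that $\mathbf{Y}$ is second countable and locally compact Hausdorff and that $\tilde\mu$ is locally finite, promotes this to inner regularity on all Borel sets.

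The construction of the extension is essentially forced, and the compactness observation makes $\sigma$-additivity automatic; I expect the main bookkeeping effort to be in the Radon verification, particularly checking inner regularity on Borel sets that are not themselves expressible as countable unions of compact open sets. However, since $\mathbf{Y}$ has a countable basis of compact open sets and $\tilde\mu$ is finite on each such basis element, this follows from the standard regularity theorem for Borel measures on second countable locally compact Hausdorff spaces.
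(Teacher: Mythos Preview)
Your proof is correct and follows essentially the same route as the paper: both use the Carath\'eodory extension (the paper constructs the outer measure explicitly, you invoke the theorem as a black box after verifying $\sigma$-additivity via compactness), and both conclude regularity from the standard fact that a locally finite Borel measure on a second countable locally compact Hausdorff space is Radon. Your compactness argument showing that countable disjoint unions in $\mathcal{C}(\mathbf{Y})$ are actually finite is in fact more explicit than the paper's one-line justification that $\eta^\ast(C)=\eta(C)$.
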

\begin{proof}
Given a non-negative, additive function $\eta$ on $\mathcal{C}(\mathbf{Y})$, define an outer measure $\eta^{\ast}: 2^{\mathbf{Y}} \to [0, \infty]$ as follows: for $A \in 2^{\mathbf{Y}}$ $$ \eta^{\ast}(A):= \op{inf}\left \{ \sum_{i=1}^{\infty} \eta(C_i): A \subseteq \bigcup_{i=1}^{\infty} C_i \text{ where } C_i \in \mathcal{C}(\mathbf{Y}) \text{ is a cylinder set }\right \}.$$ Using additivity of $\eta$ and compactness of $C$, we have $\eta^{\ast}(C) = \eta(C)$ for $C \in \mathcal{C}(\mathbf{Y})$. A cylinder set $C$ in $\mathcal{C}(\mathbf{Y})$ is outer measurable, that is, for every $A \in 2^{\mathbf{Y}}$ we have $\eta^{\ast}(A) = \eta^{\ast}(A \cap C^c)+ \eta^{\ast}(A \cap C)$. 
An outer measure is a measure on the $\sigma$-algebra of outer measurable sets which in this case is the same as the $\sigma$-algebra of Borel sets. Therefore the outer measure $\eta^{\ast}$ is a measure on the Borel $\sigma$-algebra of $\mathbf{Y}$. 
The space $\mathbf{Y}$ is locally compact and Hausdorff and every open set in $\mathbf{Y}$ is $\sigma$-compact, that is, a countable union of compact sets. Also $\eta^{\ast}$ is a non-negative Borel measure on $\mathbf{Y}$ such that it is finite on compact sets. Therefore, $\eta^{\ast}$ is a regular measure. \end{proof}

Thus the space of relative currents can be given a weak-$\ast$ topology, that is, $\displaystyle{\eta_n \to \eta}$ in $\rc$ iff $\displaystyle{\int_\mathbf{Y} f \, d\eta_n \to \int_\mathbf{Y} f \, d\eta}$ for all compactly supported functions $f$ on $\mathbf{Y}$. Since $\mathbf{Y}$ is a locally compact space, by the result in \cite[Chapter III, Section 1]{Bourbaki} $\rc$ is complete. 

\subsection{Coordinates for $\rc$}
Fix a relative basis $\RelativeBasis$ of $\free$. Given $w \neq 1 \in \free$, consider the unique oriented path, denoted $\gamma_w$, determined by $w$ starting at the base point and let $C(w)$ be a shorthand for $C(\gamma_w)$. Note that this cylinder set contains \emph{unoriented} bi-infinite geodesics that cross $\gamma_w$. For $w \in \free \setminus \ffa$, we have $C(w) \subset \mathcal{C}(\mathbf{Y})$. Orbits of cylinder sets of the form $C(w)$ under deck transformations cover $\mathbf{Y}$. We denote $\eta$ applied to $C(w)$ by $\eta(w)$. 

\begin{itemize}
\item Let $v \in \free$. Then $v \cdot C(w)$ is the set of all bi-infinite geodesics that cross an edge path labeled by $w$ starting at the vertex labeled $v$ in the Cayley graph. By $\free$-invariance of a relative current we have that $\eta(C(w)) = \eta(v \cdot C(w))$. Thus we can work just with the cylinder sets determined by finite paths starting at the base point. Since every compact open set is a finite disjoint union of cylinder sets, a relative current is uniquely determined by its values on (cylinder sets determined by) words in $\free \setminus \ffa$.

\item Since a relative current is uniquely determined by its values on $\free \setminus \ffa$, a sequence of relative currents $\eta_n$ converges to $\eta$ if and only if $\eta_n(w) \to \eta(w)$ for all $w \in \free \setminus \ffa$. 

\item For any finite path $\gamma$ in $\op{Cay}(\free,\RelativeBasis)$ we have $C(\gamma) = C(\overline{\gamma})$, where $\overline{\gamma}$ denotes the opposite orientation on $\gamma$. If $w$ and $\gamma_w$ are as above, then $C(w) = C(\gamma_w)= C(\overline{\gamma_w}) = w\cdot C(\overline{w})$. Thus $\eta(w) = \eta(\overline{w})$.   

\item Let $w = e_0e_1\ldots e_l \in \free \setminus \ffa$ where each $e_i \in \RelativeBasis^{\pm}$. Then $C(w) = \bigcup C(we)$, where the union is taken over all basis elements in $\RelativeBasis$ except $e = \overline{e_l}$. Here $\overline{e}$ denotes the inverse of $e$. Also $C(w) = \bigcup \overline{e}\cdot C(ew)$ where $e$ is any basis element other than $\overline{e_0}$. Then additivity of a relative current can be stated as $$\eta(w) = \sum_{e \neq \overline{e_l}} \eta(we) \hspace{.5cm} \text{ or } \hspace{.5cm} \eta(w) = \sum_{e \neq \overline{e_0}} \eta(ew).$$  

For example, let $\free = \la a, b\ra$ and $\ffa = \{[\la a \ra]\}$, we have 
\begin{align*} 
\eta(b) & =\eta(ba)+\eta(bb)+\eta(b\overline{a}), \\ 
\eta(b) & = \eta(ab)+\eta(bb)+\eta(\overline{a}b)  
\end{align*}

\item Let $v,w \in \free \setminus \ffa$ be such that $v$ is a subword of $w$. Then $\eta(w) \leq \eta(v)$. 
\end{itemize}

\begin{example}[Relative current]
Consider a conjugacy class $\alpha \in [\free \setminus \ffa]$ such that $\alpha$ is not a power of any other conjugacy class in $\free$. Then $\eta_{\alpha}(w)$ is the number of occurrences of $w$ in the cyclic words $\alpha$ and $\overline{\alpha}$. Equivalently, one can also count the number of lifts of $\alpha$ that cross the path $\gamma_w$ in the Cayley graph. We call such currents and their multiples \emph{rational relative currents}. For example, let $\free = \la a, b\ra, \ffa = \{[\la a \ra]\}$ and let $\alpha = aba\overline{b}ab$. Then $ \eta_{\alpha}(b) = 3, \eta_{\alpha}(ba) = 2, \eta_{\alpha}(abab) = 1$ and $\eta_{\alpha}(\overline{b}ab)=1$. \end{example}

Given $w \in \free$, a \emph{length $k$ extension} of $w$ is a word $w^{\prime} = wx_1\ldots x_k$ where $x_i \in \RelativeBasis$, $x_i \neq \overline{x_{i+1}}$ and $x_1$ is not the inverse of the last letter of $w$. 
   
\begin{lemma} Any non-negative function $\eta$ on $\free \setminus \ffa$ invariant under inversion and the action of $\free$, and satisfying the condition $$\eta(w) = \sum_{\substack{\text{length one} \\ \text{extension of w}}} \eta(v)$$ for all $w \in \free \setminus \ffa$ determines a relative current.  
\end{lemma}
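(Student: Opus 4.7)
The plan is to extend $\eta$ from $\free \setminus \ffa$ to all of $\mathcal{C}(\mathbf{Y})$ in three stages: first to cylinder sets based at any vertex using $\free$-invariance, then to finite disjoint unions of such cylinder sets using additivity, and finally verify that a common-refinement argument makes this extension well defined.

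First I would note that if $w \in \free \setminus \ffa$ and $we$ is a reduced word obtained by appending a basis letter, then $we$ is also in $\free \setminus \ffa$: indeed, if $we$ lay in some $A_j$ (i.e., used only basis letters from $A_j$) then its prefix $w$ would do the same, contradicting $w \in \free \setminus \ffa$. So the hypothesized formula $\eta(w) = \sum_{e \neq \overline{e_l}} \eta(we)$ never forces us to evaluate $\eta$ on a word outside its domain. Iterating in both directions (using also the symmetric formula $\eta(w) = \sum_{e \neq \overline{e_0}} \eta(ew)$, which follows from inversion-invariance), I obtain for every $k\geq 0$ the iterated additivity
\[
\eta(w) \;=\; \sum_{v} \eta(v),
\]
where $v$ ranges over all reduced extensions of $w$ obtained by appending $k_1$ letters on the right and $k_2$ letters on the left, for any fixed $k_1, k_2$. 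Next, for any finite path $\gamma$ in $\op{Cay}(\free, \RelativeBasis)$ that crosses a word of $B_{\ffa}$, write $\gamma = v\cdot \gamma_w$ for the unique $v \in \free$ with $\gamma$ starting at $v$ and $w \in \free\setminus\ffa$; set
\[
\eta(C(\gamma)) \;:=\; \eta(w).
\]
By the discussion following Definition~\ref{def:DBoundaryA}, such cylinder sets $C(\gamma)$ form a basis for $\tau=\tau'$ on $\mathbf{Y}$, and each lies entirely in $\mathbf{Y}$.

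For a general $U \in \mathcal{C}(\mathbf{Y})$, compactness together with the basis description lets me write $U = \bigsqcup_{i=1}^n v_i\cdot C(w_i)$ with $w_i \in \free\setminus\ffa$ (the usual tree-cylinder argument: refine any finite cover by the tree-ordering to obtain disjointness). Define $\eta(U) := \sum_i \eta(w_i)$. For well-definedness, given a second decomposition $U = \bigsqcup_{j=1}^m u_j\cdot C(x_j)$, I would use the key fact that in the Cayley graph of a free group the intersection of two cylinder sets is either empty or itself a cylinder set. Taking $K$ larger than the length of any of the finitely many $w_i, x_j$, each $v_i\cdot C(w_i)$ and each $u_j\cdot C(x_j)$ partitions, by iterated additivity applied on both sides, into cylinder sets $v\cdot C(y)$ whose underlying paths all have length $K$. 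The two resulting refinements of $U$ into length-$K$ cylinder sets must coincide (as decompositions of the same set by maximal-length cylinder subsets), so by iterated additivity
\[
\sum_i \eta(w_i) \;=\; \sum_{(i,\,y)} \eta(y) \;=\; \sum_{(j,\,y)} \eta(y) \;=\; \sum_j \eta(x_j).
\]

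Finally I would verify the four required properties. Non-negativity is inherited from $\eta$ on $\free\setminus\ffa$. Additivity on $\mathcal{C}(\mathbf{Y})$ follows because concatenating decompositions of two disjoint compact opens produces a decomposition of their union. The $\free$-invariance is immediate from the definition $\eta(v\cdot C(w)) = \eta(w)$, and flip-invariance reduces to the assumed inversion-invariance of $\eta$ together with the identity $C(w) = w \cdot C(\overline{w})$. The main obstacle is the well-definedness step: one must both justify that any compact open set in $\mathbf{Y}$ admits a finite disjoint decomposition into basis cylinder sets (requiring care because $\mathbf{Y}$ itself is non-compact), and verify that the common-refinement argument respects the two-sided iterated additivity. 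Both reduce to the tree structure of $\op{Cay}(\free,\RelativeBasis)$ and to the fact, noted above, that extensions of words in $\free\setminus\ffa$ remain in $\free\setminus\ffa$, so no value of $\eta$ outside its declared domain is ever needed.
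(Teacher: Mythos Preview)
Your proposal is correct and follows essentially the same approach as the paper: decompose a compact open set $C\in\mathcal{C}(\mathbf{Y})$ as a finite disjoint union of cylinder sets $C(w_i)$ with $w_i\in\free\setminus\ffa$, set $\eta(C):=\sum_i\eta(w_i)$, and check this is well defined. The paper's proof is terse and simply asserts well-definedness, whereas you spell out the common-refinement argument and verify all four axioms; your observation that one-letter extensions of words in $\free\setminus\ffa$ remain in $\free\setminus\ffa$ is the key point making the additivity hypothesis usable, and it is left implicit in the paper.

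One minor imprecision: your phrase ``length-$K$ cylinder sets must coincide (as decompositions of the same set by maximal-length cylinder subsets)'' is not quite right as stated, since the cylinders $v_i\cdot C(w_i)$ and $u_j\cdot C(x_j)$ are based at different vertices, so refining each to paths of length $K$ does not automatically produce identical families. The cleaner version is to take pairwise intersections $v_i\cdot C(w_i)\cap u_j\cdot C(x_j)$, each of which is either empty or itself a cylinder $C(\gamma_{ij})$ containing both defining paths, and then apply iterated (two-sided) additivity to pass from each original decomposition to this common refinement. This is what your argument is really doing, and it goes through.
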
 \begin{proof} A set $C \in \mathcal{C}(\mathbf{Y})$ can be written as a disjoint union of cylinder sets $C(w_1), \ldots C(w_k)$ with $w_i \in \free \setminus \ffa$. Then define $\eta(C) := \sum_{i=1}^k \eta(w_i)$. The value $\eta(C)$ does not depend on the choice of $w_i$.
Thus we have an additive and non-negative function on $\mathcal{C}(\mathbf{Y})$ which is invariant under the action of $\free$. \end{proof}
\subsection{Projectivized relative currents}
Let $\prc$ denote the space of projectivized relative currents. It has quotient topology induced from $\rc$. A sequence of projective currents $[\eta_i]$ converges to $[\eta]$ in $\prc$ iff there exist scaling constants $a_i$ such that relative currents $a_i \eta_i$ converge to $\eta$ in $\rc$. 

\begin{example}
Let $\free = \la a, b \ra$ and let $\ffa =\{[\la a \ra]\}$. Consider the sequence $\eta_{a^nb} \in \rc$. This sequence converges to a relative current $\eta_{\infty}$ which is given by $\eta_{\infty}(a^m ba^n) =1$ for all $m,n \geq 0$ and $\eta_{\infty}(w) =0$ for all other $w \in \free \setminus \ffa$. Whereas in the space of measured currents as defined in \cite{Martin}, the sequence $\eta_{a^nb}/n$ converges to the current $\eta_a$. \end{example}

\begin{lemma}\label{P:Compact} $\prc$ is compact. \end{lemma}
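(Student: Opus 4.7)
The plan is to realize $\prc$ as the continuous image of a compact slice $S \subset \rc$ cut out by a linear normalization, and to establish compactness of $S$ via Tychonoff's theorem. First I would choose the normalizing functional $\|\eta\| := \sum_{b \in B_{\ffa}} \eta(b)$. Since $B_{\ffa}$ is finite, this is a sum of finitely many coordinate evaluations and hence continuous on $\rc$; set $S := \{\eta \in \rc : \|\eta\| = 1\}$.

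The core step, which I expect to be the main obstacle, is a uniform bound of the form $\eta(w) \leq \|\eta\|$ for every $w \in \free \setminus \ffa$ and every $\eta \in \rc$. The idea is to observe, directly from Definition~\ref{def:RelativeBasis}, that any reduced $w \in \free \setminus \ffa$ must contain some element of $B_{\ffa}$ (up to inversion) as a contiguous subword: either a cofactor generator $b_j$ appears in $w$, or $w$ has a transition $a_{ij}^{\pm}a_{kl}^{\pm}$ between distinct factors $A_i$ and $A_k$. Combining this with the subword monotonicity and $\free$-invariance listed in Section~\ref{subsec:Definition} (and invoking flip-invariance to absorb the inverse case) yields $\eta(w) \leq \eta(b) \leq \|\eta\|$. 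Conceptually, this is precisely where cocompactness of the $\free$-action on $\mathbf{Y}$ (Lemma~\ref{L:CocompactAction}) enters, since it is the reason every such $w$ admits a $B_{\ffa}$-subword.

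Once this bound is in hand, the rest is routine. Non-vanishing of $\|\eta\|$ on nonzero currents follows at once: any $w$ with $\eta(w)>0$ produces a $b \in B_{\ffa}$ with $\eta(b) \geq \eta(w) > 0$. Hence every projective class has a representative in $S$, and the restriction to $S$ of the projection $\rc \setminus \{0\} \to \prc$ is continuous and surjective. For compactness of $S$, I would use the evaluation embedding $\eta \mapsto (\eta(w))_{w \in \free \setminus \ffa}$ into the compact product $\prod_{w \in \free \setminus \ffa} [0,1]$; the properties defining a relative current (non-negativity, flip-invariance, $\free$-invariance, additivity), together with the normalization $\|\eta\|=1$, are each intersections of closed subsets of this product, so $S$ is closed in a Tychonoff-compact space and hence compact. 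Compactness of $\prc$ then follows as the continuous image of $S$.
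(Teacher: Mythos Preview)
Your proof is correct and follows essentially the same line as the paper's: both hinge on the observation that every $w \in \free \setminus \ffa$ contains some element of $B_{\ffa}$ as a subword, giving the uniform bound $\eta(w) \leq \max_{b \in B_{\ffa}} \eta(b)$, which is exactly what makes the coordinates bounded after normalization. The paper normalizes by the maximum over $B_{\ffa}$ rather than the sum and extracts a limit via a diagonal subsequence argument rather than Tychonoff, but these are interchangeable packagings of the same idea.
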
 \begin{proof}

Consider a sequence of projective relative currents $[\eta_n]$. We have to show that it has a convergent subsequence. Fix a relative basis $\RelativeBasis$ and the associated set $B_{\ffa} = \{u_1, \ldots, u_r\}$ (see Definition~\ref{def:RelativeBasis}). Let $\eta_n$ be a representative of $[\eta_n]$ normalized such that $\eta_n(u_i) \leq 1$ for all $u_i \in B_{\ffa}$ and $\eta_n(u_j) = 1$ for some $u_j \in B_{\ffa}$. We have $\eta_n(w) \leq \eta_n(u_i)$ where $w \in \free \setminus \ffa$ and crosses a path labeled $u_i \in B_{\ffa}$ in $\op{Cay}(\free,\RelativeBasis)$. The bounded sequence $\{(\eta_n(u_1), \ldots, \eta_n(u_r))\}_{n \in \mathbb{N}}$ has a subsequence that converges to a non-zero element of $\mathbb{R}^r$. For every $w \in \free \setminus \ffa$, $\{\eta_n(w)\}_{n \in \mathbb{N}}$ is a bounded sequence and hence has a convergent subsequence. Now using the diagonal argument conclude that $\{(\eta_n(w))_{w \in \free \setminus \ffa}\}_{n \in \mathbb{N}}$ has a subsequence that converges to a non-zero element. Thus $\{[\eta_n]\}_{n \in \mathbb{N}}$ has a convergent subsequence in $\prc$.   
\end{proof}
\subsection{Density of rational relative currents}\label{subsec:Density}
\begin{proposition} \label{P:Dense} The set of projectivized relative currents induced by conjugacy classes $\alpha \in [\free \setminus \ffa]$ are dense in $\prc$. \end{proposition}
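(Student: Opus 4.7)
The plan is to follow Martin's original density argument for $\pmc$, but carried out on the extension of $\eta$ to a signed measured current supplied by Section~\ref{sec:ExtensionRC}. Given nonzero $\eta \in \rc$, I would first produce the extension $\tilde{\eta}$, defined on cylinder sets of all of $\partial^2 \free$ (not just $\mathbf{Y}$), which agrees with $\eta$ on $\free \setminus \ffa$, still satisfies the two additivity identities, and has $\tilde{\eta}(w) \geq 0$ whenever the reduced word $w$ has length at least some threshold $L_0 = L_0(\ffa)$. This positivity on long enough words is exactly what is needed to run Martin's combinatorial construction.

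For each $n \geq L_0$, I would form a de Bruijn-type weighted digraph $D_n$ with respect to the relative basis $\RelativeBasis$: vertices are reduced words of length $n{-}1$, and every reduced word $w$ of length $n$ gives a directed edge from its length-$(n{-}1)$ prefix to its length-$(n{-}1)$ suffix, weighted by $\tilde{\eta}(w)$. The additivity relations $\tilde{\eta}(u) = \sum_{e \neq \overline{e_{\mathrm{last}}}} \tilde{\eta}(ue) = \sum_{e \neq \overline{e_{\mathrm{first}}}} \tilde{\eta}(eu)$ become Kirchhoff conditions at each vertex, and flip invariance gives $D_n$ the edge-reversal symmetry needed for an Eulerian decomposition. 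Approximating the finitely many nonzero weights by positive rationals that still satisfy the flow condition and clearing denominators, one obtains a balanced multigraph; by the standard Euler circuit theorem, it decomposes into closed walks, and these closed walks read off cyclically reduced conjugacy classes in $\free$. Gluing (or choosing the approximation so that the multigraph is connected on its support) yields a single conjugacy class $\alpha_n$ whose rational current $\eta_{\alpha_n}$ matches $\tilde{\eta}$ on length-$n$ cylinders up to a global scale and an error of order $O(1/n)$.

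To see that $\alpha_n \in [\free \setminus \ffa]$ for large $n$, pick any $w_0 \in \free \setminus \ffa$ with $\eta(w_0) > 0$; since $\tilde{\eta}(w_0) = \eta(w_0) > 0$, the Eulerian circuit $\alpha_n$ must cross an extension of $w_0$ of length $n$ and therefore crosses some word in $B_{\ffa}$, so $\alpha_n$ cannot sit inside any single conjugate of an $A_i$ and thus represents a class in $[\free \setminus \ffa]$. To pass to $\prc$, fix $u_j \in B_{\ffa}$ with $\eta(u_j) > 0$ and normalize by $1/\eta_{\alpha_n}(u_j)$; additivity propagates the length-$n$ approximation to all shorter cylinder sets, so for every $v \in \free \setminus \ffa$ the rescaled values converge to $\eta(v)$ as $n \to \infty$. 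By the coordinate description of convergence in $\rc$ from Section~\ref{subsec:Definition}, this gives $[\eta_{\alpha_n}] \to [\eta]$ in $\prc$.

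The main obstacle I anticipate is the combinatorial step in $D_n$: one must verify that the balance condition survives rational approximation (and is not destroyed by the signed corrections coming from the extension near $B_{\ffa}$), that the approximating multigraph can be chosen to have a single Eulerian circuit rather than several disjoint ones, and that the cyclic word thus obtained is cyclically reduced with no forbidden fold at the closure point. Running this cleanly amounts to checking that the extension $\tilde{\eta}$ from Section~\ref{sec:ExtensionRC} is not merely non-negative on long words but additionally has its support in $D_n$ connected enough for the Eulerian construction — this is where the relative setting genuinely adds work beyond Martin's argument, and verifying it is the heart of the proof.
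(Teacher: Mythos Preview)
Your overall strategy --- extend $\eta$ to a signed current on all of $\partial^2\free$, use the additivity identities as Kirchhoff laws on a de Bruijn graph, and read off conjugacy classes as cycles --- is exactly the skeleton of the paper's proof. But you have the key positivity hypothesis backwards, and this breaks the argument as written.

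The extension supplied by Section~\ref{sec:ExtensionRC} (Lemma~\ref{L:k-extension}) is a \emph{$k$-extension}: for each fixed $k$ one builds a signed current $\eta$ agreeing with $\eta_0$ on $\free\setminus\ffa$ and satisfying $\eta(w)\ge 0$ for all $|w|\le k$, not for $|w|\ge L_0$. Indeed the construction in the appendix achieves non-negativity by adding a correction term $\eta_{\ffa,C}(w)=C/(2s-1)^{|w|-1}$ that decays exponentially in $|w|$, so there is no control whatsoever on long $\ffa$-words. Hence you cannot fix one $\tilde\eta$ and then run the de Bruijn argument at arbitrarily large levels $n$: the edge weights on length-$n$ words lying in $\ffa$ may well be negative, and the Eulerian decomposition is not available there.

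The paper handles this by taking a fresh $k$-extension for each $k$ and working only at that one level. It also sidesteps your rational-approximation and connectivity worries: rather than seeking a single Eulerian circuit, it scales by a large $R$ so that some length-$k$ cylinder value exceeds Martin's constant $P$, applies the cycle-finding Lemma~\ref{L:Approximating currents from below} to peel off one $\eta_{\alpha_1}$ with $\eta_{\alpha_1}(w)\le R\eta(w)$ for all $|w|\le k$, and iterates on the (still non-negative on $|w|\le k$) remainder. The resulting finite sum $\sum\eta_{\alpha_i}$ approximates $R\eta$ to within $P$ on all $|w|\le k$; at least one $\alpha_i$ must lie in $[\free\setminus\ffa]$, those $\alpha_i$ in $\ffa$ contribute nothing on $\free\setminus\ffa$ and are discarded, and the remaining sum is finally replaced by a single rational current $\tfrac1m\eta_{\beta^m}$ with $\beta^m=w_1^m\cdots w_l^m$. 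So the ``gluing into one cycle'' happens at the very end by this explicit trick, not by connectivity of the flow graph.
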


Let $\RelativeBasis$ be a relative basis of $\free$ and let $|w|$ denote the word length of $w \in \free$ with respect to $\RelativeBasis$. In the absolute case, the following lemma is the main step to prove density of rational currents. But it doesn't directly apply to the relative setting as explained below. 

\begin{lemma}[{\cite[Lemma 15]{Martin}}]
Let $\eta$ be a measured current and let $k \geq 2$. Let $P = 2\rank(2\rank-1)^{2\rank(2\rank-1)^{k-2}}$ be a constant. If $\eta(w_0) \geq P$ for some $w_0 \in \free$ with $|w_0| = k$, then there exists a conjugacy class $\alpha \in [\free]$ and the corresponding measured current $\eta_{\alpha}$ with $\eta(w) \geq \eta_{\alpha}(w)$ for all $w \in \free$ and $|w|\leq k$. \end{lemma}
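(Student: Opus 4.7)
The plan is to extract the conjugacy class $\alpha$ by a flow–decomposition argument on a de Bruijn–type graph built from $\eta$, using the additivity relation as a Kirchhoff condition and the constant $P$ as pigeonhole room to guarantee that some simple cycle has integral multiplicity at least one. I would first set up the finite directed graph $\Gamma_k$ whose vertices are the reduced words of length $k-1$ in $\free$ and whose directed edges are the reduced words of length $k$, with $w=x_1\dots x_k$ giving an edge from $x_1\dots x_{k-1}$ to $x_2\dots x_k$. Weight each edge $w$ by $\eta(w)\in[0,\infty)$. The additivity relation
\[
\eta(v)=\sum_{\substack{\text{length-one}\\ \text{extension }vy}}\eta(vy)=\sum_{\substack{\text{length-one}\\ \text{extension }yv}}\eta(yv)
\]
applied to length-$(k-1)$ words $v$ says precisely that, at every vertex of $\Gamma_k$, the total weight of incoming edges equals the total weight of outgoing edges. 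Thus the edge-weights form a nonnegative flow on $\Gamma_k$.

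Next I would decompose this flow into simple directed cycles. The standard algorithm is to repeatedly peel off simple directed cycles through the distinguished edge $w_0$: starting from $w_0$, follow any outgoing edge of positive residual weight, continue until returning to a previously visited vertex, extract the resulting simple cycle, and subtract $\min$ of the weights along it. This decomposes the flow through $w_0$ into a nonnegative integer (or real) combination of simple directed cycles passing through $w_0$. The number of vertices of $\Gamma_k$ is $N:=2\rank(2\rank-1)^{k-2}$ and each vertex has out-degree at most $2\rank-1$, so the number of distinct simple directed cycles through the fixed edge $w_0$ is at most $(2\rank-1)^N$. Since the flow through $w_0$ is $\eta(w_0)\ge P = 2\rank\cdot (2\rank-1)^N$, by pigeonhole at least one simple cycle through $w_0$ carries mass $\ge 2\rank \ge 1$ in the decomposition; the factor $2\rank$ leaves room to double-count for the flip $w\leftrightarrow \overline w$.

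Any such simple cycle $C$ in $\Gamma_k$ reads off a cyclically reduced word $\alpha\in \free$ of length equal to the number of edges of $C$, and thus a conjugacy class $[\alpha]\in[\free]$. I would then set $\eta_\alpha$ to be the rational current associated to $\alpha$, together with its inverse $\overline\alpha$; by construction, every length-$k$ subword of the cyclic word $\alpha$ (and of $\overline\alpha$) appears as an edge of $C$ or its reverse, and the multiplicity with which each length-$k$ word $w$ appears in $\alpha,\overline\alpha$ equals the coefficient with which that edge (and its reverse) appear in $C$. Extracting $C$ from the flow shows $\eta(w)\ge \eta_\alpha(w)$ for all reduced $w$ with $|w|=k$. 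Domination on shorter words is then automatic, by summing the length-$k$ inequality over extensions and using that the same additivity relation holds for $\eta_\alpha$: $\eta(v)=\sum_{y}\eta(vy)\ge \sum_{y}\eta_\alpha(vy)=\eta_\alpha(v)$ for every reduced $v$ with $|v|<k$.

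The main obstacles, which the constant $P$ is tailored to absorb, are combinatorial bookkeeping rather than conceptual: guaranteeing that the extracted cycle corresponds to a \emph{cyclically reduced} (and hence admissible) word so it really defines a conjugacy class, and accounting simultaneously for the flip invariance $C(w)=C(\overline w)$ so that subtracting $\eta_\alpha$ (which counts both orientations) still leaves a nonnegative function. The flip issue is handled by partitioning the edge set of $\Gamma_k$ into orientation-pairs and working with the symmetrised flow; cyclic reducedness follows from the construction of $\Gamma_k$ using reduced words and the fact that the cycle closes up in the graph, which forces the last letter of $\alpha$ not to be the inverse of the first. The bound $P=2\rank\cdot(2\rank-1)^N$ is exactly what makes all these factors of $2$ and $2\rank$ harmless in the pigeonhole step.
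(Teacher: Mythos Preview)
Your proposal is correct and follows essentially the same approach that the paper describes (the paper does not give a full proof but only a one-paragraph sketch, citing \cite[Lemma~15]{Martin}): build the de~Bruijn--type graph on length-$(k-1)$ words with length-$k$ words as edges, observe that additivity of $\eta$ is Kirchhoff's law so the edge weights form a nonnegative flow, decompose the flow into simple cycles, and use the size of $P$ as pigeonhole room to find a cycle of coefficient at least one (in fact at least $2$, absorbing the flip), which reads off the desired cyclically reduced $\alpha$. Your added details---the bound on the number of simple cycles through $w_0$, the downward propagation of the inequality to shorter words via additivity, and the handling of the orientation pair $w,\overline{w}$---are exactly what is needed to turn the paper's sketch into a proof.
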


The proof of the above lemma relies on finding cycles in a certain labeled directed graph associated to $\eta$ defined as follows: vertices are given by words of length ${k-1}$ and edges are given by words of length $k$. A directed edge $w$ joins vertex $u$ to vertex $v$ if $u$ is the prefix of $w$ and $v$ is the suffix of $w$. An edge $w$ is labeled by $\eta(w)$. Since $\eta$ satisfies additivity laws for all words in $\free$, this graph satisfies Kirchhoff's law at each vertex which is crucial to find cycles (which correspond to $\alpha$) in the graph. The same graph defined for a relative current $\eta_0$ does not satisfy Kirchhoff's law at vertices which correspond to words in some free factor $A_i$ for $[A_i] \in \ffa$ because $\eta_0$ is not defined for words in $\ffa$.   

A \emph{signed measured current} on $\partial^2 \free$ is an $\free$-invariant and additive function on the set of compact open sets of $\partial^2\free$. We now restate the above lemma for a signed measured current which is non-negative on words in $\free$ of bounded length.

\begin{lemma}\label{L:Approximating currents from below}
Let $k\geq 2$ and let $\eta$ be a signed measured current such that $\eta(w) \geq 0$ for all $w \in \free$ with $|w|\leq k$. Let $P = 2\rank(2\rank-1)^{2\rank(2\rank-1)^{k-2}}$ be a constant. If $\eta(w_0) \geq P$ for some $w_0 \in \free$ with $|w_0| = k$, then there exists a conjugacy class $\alpha \in [\free]$ and the corresponding measured current $\eta_{\alpha}$ with $\eta(w) \geq \eta_{\alpha}(w)$ for all $w \in \free$ and $|w|\leq k$. 
\end{lemma}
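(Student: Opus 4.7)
The plan is to run Martin's original proof of \cite[Lemma 15]{Martin} essentially verbatim, checking at each step that the only hypotheses actually used are the ones available here: additivity, $\free$-invariance, and non-negativity of $\eta(w)$ for $|w|\leq k$. Since a signed measured current is by definition $\free$-invariant and additive, and the argument never inspects $\eta$ on words of length greater than $k$, the sign condition away from that range will be irrelevant.

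Concretely, I would first reconstruct the labelled directed graph $\Gamma=\Gamma(\eta,k)$ whose vertex set is the collection of reduced words of length $k-1$ and whose directed edges are the reduced words $w$ of length $k$, where $w$ points from its length-$(k-1)$ prefix to its length-$(k-1)$ suffix and carries the label $\eta(w)$. Two facts about $\Gamma$ are needed. First, all edge labels are non-negative, which is immediate from the standing hypothesis $\eta(w)\geq 0$ for $|w|\leq k$. Second, Kirchhoff's law holds at every vertex $u$: this is exactly the additivity identity
\[
\eta(u)=\sum_{e\neq \overline{e_{k-1}}}\eta(ue)=\sum_{e\neq\overline{e_0}}\eta(eu),
\]
which is a consequence of additivity of $\eta$ together with $\free$-invariance; the identity only involves words of lengths $k-1$ and $k$, so the signed nature of $\eta$ on longer words is never invoked.

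Next I would quote Martin's combinatorial extraction: under the assumption $\eta(w_0)\geq P$, the counting bound $P=2\rank(2\rank-1)^{2\rank(2\rank-1)^{k-2}}$ forces the existence of a closed directed walk in $\Gamma$ passing through the edge $w_0$, along which every edge label can be decreased by a positive integer while preserving non-negativity of all labels and Kirchhoff's law at every vertex. The closed walk corresponds (by reading off successive letters) to a cyclic word $\alpha$, i.e.\ a conjugacy class in $[\free]$; the associated rational measured current $\eta_\alpha$ satisfies $\eta(w)\geq \eta_\alpha(w)$ for every $w$ with $|w|\leq k$, since subtracting $\eta_\alpha$ corresponds precisely to the label decrease along the walk.

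The only conceptual step that needs justification in the relative setting is checking that Martin's walk-extraction genuinely uses only additivity and pointwise non-negativity on the finite set of length-$k$ words; once this is verified there is no additional obstacle. The main thing to be careful with is bookkeeping for $\free$-invariance, which allows one to shift each cylinder set to start at the base point so that the combinatorial graph $\Gamma$ really encodes everything needed. There is no issue from the free factor system $\ffa$ here because the lemma is stated in terms of a signed measured current on the whole of $\partial^2\free$, and the conjugacy class $\alpha$ produced is allowed to lie anywhere in $[\free]$.
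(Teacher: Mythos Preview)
Your proposal is correct and matches the paper's approach exactly: the paper simply restates Martin's lemma for signed measured currents that are non-negative on words of length at most $k$, noting (just before the statement) that Martin's proof relies only on the labeled directed graph with Kirchhoff's law at each vertex, which in turn uses only additivity and non-negativity on words of length $\le k$. If anything, you have supplied more detail than the paper, which gives no separate proof and treats the lemma as an immediate restatement.
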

For $\eta_0 \in \rc$, let $\eta$ be a signed measured current such that $\eta(w) = \eta_0(w)$ for $w \in \free \setminus \ffa$ and $\eta(w) \geq 0$ for all words $w \in \free$ with $|w|\leq k$. We call such an $\eta$ a \emph{ $k$-extension} of $\eta_0$. 

\begin{lemma}\label{L:k-extension}
Let $\eta_0$ be a relative current and let $k\geq1$. Then there exists a signed measured current $\eta$ which is a $k$-extension of $\eta_0$. 
\end{lemma}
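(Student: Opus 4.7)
Plan: A signed measured current is determined by the real numbers $\eta(w):=\eta(C(w))$ for $w\in\free$, subject to additivity, flip--invariance, and $\free$--invariance. Since $\eta(w)=\eta_0(w)$ is forced for $w\in\free\setminus\ffa$ (and these values already satisfy all the current axioms among themselves, inherited from $\eta_0$), the whole task reduces to assigning real values $\eta(w)$ to words $w$ that lie entirely in some free factor $A_i$, consistently with the global additivity identities.

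For each $A_i$-word $w$ with $|w|\le k$ I would define
$$
\eta(w):=\sum_{j=0}^{k-|w|}\;\sum_{\substack{e_1,\ldots,e_{j+1}:\ e_1,\ldots,e_j\in A_i^{\pm}\\ e_{j+1}\notin A_i^{\pm},\ we_1\cdots e_{j+1}\ \text{reduced}}}\eta_0(we_1\cdots e_{j+1}).
$$
Each summand is defined, finite, and non-negative because $we_1\cdots e_{j+1}\in\free\setminus\ffa$, and the outer sum has only finitely many terms, so $\eta(w)$ is a finite non-negative real. Geometrically, $\eta(w)$ measures under $\eta_0$ the bi-infinite geodesics crossing $\gamma_w$ whose first right-exit from $A_i$ occurs within $k-|w|+1$ letters.

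Right-additivity $\eta(w)=\sum_{e\ne\bar{e_l}}\eta(we)$ at $w\in A_i$ with $|w|<k$ then follows by splitting the defining sum according to the first extension letter $e_1$: terms with $e_1\notin A_i^{\pm}$ (the $j=0$ contributions) give $\eta_0(we_1)$, while terms with $e_1\in A_i^{\pm}$ (the $j\ge 1$ contributions) reassemble, after an index shift, into $\eta(we_1)$ for a word of length $|w|+1\le k$. Left-additivity and flip-invariance at $|w|\le k$ are then obtained either by symmetrizing the defining sum (using inclusion--exclusion over geodesics exiting $A_i$ on either side within the appropriate window) or by directly matching left-extensions against right-extensions using the additivity and flip-invariance that $\eta_0$ already satisfies on $\free\setminus\ffa$.

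For $A_i$-words with $|w|>k$ no non-negativity is required, so one may extend $\eta$ inductively on $|w|$, at each level distributing (possibly signed) values across the $A_i^{\pm}$-extensions in order to maintain additivity at the parent word; because the number of extensions exceeds the number of constraints, a consistent assignment always exists, and choosing the distribution symmetrically preserves flip-invariance. The $\free$-invariance of $\eta$ holds because its definition depends only on the reduced word $w$ and $\eta_0$ is itself $\free$-invariant. The main obstacle is the combinatorial bookkeeping required to verify left-additivity at $|w|\le k$ simultaneously with right-additivity, and to verify that the inductive distribution for $|w|>k$ is internally consistent; this is routine but delicate and is where the symmetrization or the alternating inclusion--exclusion form of the sum is essential.
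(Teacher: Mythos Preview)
Your explicit formula is not flip-invariant, and this is a genuine gap, not just bookkeeping. Take $\free=\langle a,b,c\rangle$, $\ffa=\{[\langle a,b\rangle]\}$, $k=2$, and $\eta_0=\eta_{baac}$ the rational relative current of the conjugacy class $baac$. Your definition gives
\[
\eta(a)=\underbrace{\eta_0(ac)+\eta_0(a\bar c)}_{j=0}+\underbrace{\eta_0(aac)+\cdots}_{j=1}=1+1=2,
\qquad
\eta(\bar a)=\underbrace{0}_{j=0}+\underbrace{\eta_0(\bar a\bar b\bar c)}_{j=1}=1,
\]
so $\eta(a)\neq\eta(\bar a)$. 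Equivalently, right-additivity holds at $a$ (the sum of $\eta(ae)$ is $2$) but left-additivity fails (the sum of $\eta(ea)$ is only $1$). The naive symmetrization $\tilde\eta(w)=\tfrac12(\eta(w)+\eta(\bar w))$ restores flip-invariance but destroys additivity: with the same data, $\tilde\eta(\bar a)=\tfrac32$ while $\sum_e\tilde\eta(\bar a e)=\tfrac12\bigl(\eta_R(\bar a)+\sum_f\eta_R(fa)\bigr)=\tfrac12(1+1)=1$. Your inclusion--exclusion variant is not specified precisely enough to check, and there is no reason to expect it to satisfy both additivity identities simultaneously.

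There is a second gap in your step for $|w|>k$. You assert that ``the number of extensions exceeds the number of constraints, so a consistent assignment always exists'', but this is exactly the nontrivial content of the lemma. At each length one must solve a linear system whose unknowns are the values $\eta(w)$ for $w\in S_k^0$ (one representative per flip-pair) and whose equations come from \emph{both} the forward and backward one-step additivity at every word of the previous length; these equations are not independent, and one must identify the linear relations among them and then verify that the constant terms satisfy the same relations. The paper carries this out: it shows that all relations among the rows of the coefficient matrix are generated by identities of the form $\sum_{x\in A}r_{xu}=\sum_{x\in A}r_{x\bar u}$, and then checks the corresponding identity for the constants using the additivity of $\eta_0$ on $\free\setminus\ffa$. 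Only after obtaining \emph{some} signed extension does one fix non-negativity on words of length $\le k$, by adding a current of the form $\eta_{\ffa,C}(w)=C/(2s-1)^{|w|-1}$ for $w\in\ffa$ and $0$ otherwise, with $C$ large. So the paper's strategy is ``extend first, then correct'', whereas you try to write down a non-negative extension directly; the latter would be elegant if it worked, but your candidate does not.
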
 

To prove the above lemma, we start by defining $\eta$ on length one words in $\ffa$ arbitrarily and then extending the current to length two words by imposing the additivity property. It needs to be checked that the constraints obtained from the additive property are consistent. A detailed proof is given in Section~\ref{sec:ExtensionRC}. Assuming the above lemma is true we now prove Proposition~\ref{P:Dense}.

\begin{proof}[Proof of Proposition~\ref{P:Dense}]We follow the same method of proof as in \cite{Martin}. 
Let $\eta_0$ be a relative current and let $k \geq 2$. Choose $R>0$ such that $R \eta_0(w_0) \geq P$ for some $w_0 \in \free \setminus \ffa$ with $|w_0| = k$. Consider a signed measured current $\eta$ which is a $k$-extension of $\eta_0$. Then by Lemma~\ref{L:Approximating currents from below} applied to $R\eta$, there exists an $\alpha_1 \in \free$ such that $R\eta(w) \geq \eta_{\alpha_1}(w)$ for all $w \in \free$ with $|w|\leq k$.  If $R\eta(w) \leq \eta_{\alpha_1}(w)+P$ for all $w \in \free$ with $|w|\leq k$, then we stop, otherwise we again apply Lemma~\ref{L:Approximating currents from below} to $R \eta - \eta_{\alpha_1}$ to obtain $\alpha_2 \in \free$ such that $R\eta(w) -\eta_{\alpha_1}(w) \geq \eta_{\alpha_2}(w)$ for all $w \in \free$ with $|w|\leq k$. By induction, we have $\sum \eta_{\alpha_i}(w) \leq R\eta(w) \leq \sum \eta_{\alpha_i}(w) + P$ for all words of length less than equal to $k$. 
 
It is necessary that at least one of the $\alpha_i \in [\free \setminus \ffa]$. Indeed, if they were all in $\ffa$, then $\sum \eta_{\alpha_i}(w_0) =0$ which would mean $R\eta(w_0) \leq P$, which is a contradiction. 
 
Now we have that $$\left |\eta(w) - \frac{\sum \eta_{\alpha_i}(w)}{R} \right | \leq \frac{P}{R}$$ for all $w \in \free$ with $|w|\leq k$. For $w \in \free \setminus \ffa$, in fact $$\left |\eta_0(w) - \frac{\sum_{\alpha_i \notin \ffa} \overline{\eta}_{\alpha_i}(w)}{R} \right | \leq \frac{P}{R},$$ where $\overline{\eta}_{\alpha_i}$ is the restriction of $\eta_{\alpha_i}$ to $\mathbf{Y}$. 
 
Since $R$ can be chosen arbitrarily large we can approximate relative currents by sums of rational relative currents for all $w \in \free \setminus \ffa$ with $|w|\leq k$. Now we can approximate $\sum_{\alpha_i \notin \ffa} \eta_{\alpha_i}$ by $\frac{1}{m} \eta_{\beta^m}$ where $\beta^m = w_1^m w_2^m \cdots w_l^m$ and $w_i$ is in the conjugacy class of $\alpha_i$.  
 \end{proof}
\subsection{Relative Whitehead Graph}

\begin{definition} A conjugacy class $\alpha \in [\free \setminus \ffa]$ is $\mathbf{\ffa}$\emph{-separable} if it is contained in a non-trivial free factor system containing $\ffa$. Topologically, $\alpha$ is $\ffa$-separable if there is an $\free$-tree $T$ with set of vertex stabilizers given by $\ffa$ such that an axis of $\alpha$ does not cross every orbit of edges. \end{definition} 
To detect when a conjugacy class is $\ffa$-separable, we use Whitehead's algorithm and a theorem of Stallings \cite{S:WhiteheadAlgorithm}. As defined in \cite{S:WhiteheadAlgorithm}, a collection $\mathcal{C}$ of conjugacy classes in $[\free]$ is \emph{separable} if there exist free factors $F, F^{\p}$ such that $\free = F\ast F^{\p}$ and each conjugacy class in $\mathcal{C}$ is contained in either $F$ or $F^{\p}$. Let $\alpha_i \in [A_i]$, $0 < i \leq k$, be a conjugacy class such that $\alpha_i$ is not contained in any proper free factor of $[A_i]$. We say $\alpha_i$ is \emph{filling} in $[A_i]$. 

\begin{lemma}\label{L:Separability}
A conjugacy class $\alpha \in [\free\setminus \ffa]$ is $\ffa$-separable if and only if the collection of conjugacy classes $\mathcal{C} = \{\alpha, \alpha_1, \ldots, \alpha_k\}$ is separable. \end{lemma}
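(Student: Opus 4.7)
The plan is to treat the two directions separately, with the forward direction essentially formal and the reverse direction using a Bass--Serre tree argument that invokes the filling hypothesis on each $\alpha_i$.

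For the forward direction, assume $\alpha$ is $\ffa$-separable, witnessed by a non-trivial free factor system $\mathcal{B} = \{[B_1], \ldots, [B_m]\}$ with $\ffa \sqsubset \mathcal{B}$ and some conjugate of $\alpha$ contained in $B_j$. Pick a realization $\free = B_1 \ast \cdots \ast B_m \ast F_M$ and set $F := B_j$ and $F' := B_1 \ast \cdots \ast \widehat{B_j} \ast \cdots \ast B_m \ast F_M$, so that $\free = F \ast F'$. Both factors are proper: $F \neq \free$ because $[B_j]$ is proper, and consequently $F' \neq 1$. Since $[A_i] \sqsubset [B_l]$ for some $l$, each $\alpha_i$ has a representative inside $F$ (when $l = j$) or inside $F'$ (otherwise), and by construction $\alpha$ has a representative in $F$. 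Hence $\mathcal{C}$ is separable in the sense of Stallings.

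For the reverse direction, suppose $\mathcal{C}$ is separable via $\free = F \ast F'$, and by symmetry assume $\alpha$ has a representative in $F$. The key step is to upgrade the separability of each $\alpha_i$ to that of $[A_i]$. Let $T$ be the Bass--Serre tree of $\free = F \ast F'$; it has trivial edge stabilizers and vertex stabilizers conjugate to $F$ or $F'$. Each non-trivial $\alpha_i$ is elliptic on $T$ since some conjugate lies in $F$ or $F'$. Suppose for contradiction that $A_i$ does not fix a vertex of $T$. Then $A_i$ acts on its minimal invariant subtree $T_i$ with trivial edge stabilizers, yielding a non-trivial free product decomposition $A_i = V_1 \ast \cdots \ast V_s \ast F_r$ with $s + r \geq 2$ (the case $s+r = 1$ is excluded because $\alpha_i$ is elliptic but non-trivial, ruling out both $A_i$ fixing a vertex and $A_i$ acting as $\mathbb{Z}$ on a line). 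Projecting any $\alpha_i$-fixed vertex of $T$ orthogonally onto $T_i$ gives an $\alpha_i$-fixed point of $T_i$, and since edge stabilizers are trivial this must be a vertex; hence $\alpha_i$ lies in one of the proper free factors $V_j$ of $A_i$, contradicting the filling hypothesis. Therefore $[A_i] \sqsubset [F]$ or $[A_i] \sqsubset [F']$ for every $i$.

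To assemble these containments into the desired free factor system, I appeal to the standard fact that a free factor of $\free$ contained in another free factor of $\free$ is itself a free factor of the latter, and that the $[A_i]$'s with $[A_i] \sqsubset [F']$ jointly form a free factor system of $F'$. This yields $F' = A_{i_1} \ast \cdots \ast A_{i_q} \ast F''$, whence $\free = F \ast A_{i_1} \ast \cdots \ast A_{i_q} \ast F''$ and $\mathcal{B} := \{[F], [A_{i_1}], \ldots, [A_{i_q}]\}$ is a non-trivial free factor system with $\ffa \sqsubset \mathcal{B}$ and with $\alpha$ realized inside $[F] \in \mathcal{B}$; this shows $\alpha$ is $\ffa$-separable. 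The principal obstacle is the elliptic step of the converse: forcing $A_i$ itself to fix a vertex of $T$ using only the filling property of a single conjugacy class $\alpha_i \in [A_i]$ together with trivial-edge-stabilizer Bass--Serre theory. The bookkeeping that refines the $F \ast F'$ splitting into the free factor system $\mathcal{B}$ containing $\ffa$ is then routine.
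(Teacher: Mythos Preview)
Your proof is correct, but the converse direction takes a different route from the paper. The paper dispatches the key step (each $A_i$ is contained in $F$ or $F'$ up to conjugacy) in one line: since the intersection of two free factors of $\free$ is again a free factor, $A_i \cap g^{-1}Fg$ is a free factor of $A_i$ containing $\alpha_i$, hence equals $A_i$ by the filling hypothesis. You instead pass to the Bass--Serre tree of $F\ast F'$, show $A_i$ must be elliptic (else its minimal subtree yields a nontrivial free splitting of $A_i$ in which $\alpha_i$ is elliptic, contradicting filling), and conclude $A_i$ lies in a vertex stabilizer. Both arguments are standard and essentially equivalent in strength; the paper's is shorter because it invokes the free-factor-intersection fact as a black box, while yours unpacks that fact geometrically.

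One simplification: your final bookkeeping step---refining $F'$ as $A_{i_1}\ast\cdots\ast A_{i_q}\ast F''$ to produce $\mathcal{B}=\{[F],[A_{i_1}],\ldots,[A_{i_q}]\}$---is unnecessary. Once you know each $[A_i]$ is carried by $[F]$ or $[F']$, the free factor system $\{[F],[F']\}$ itself already contains $\ffa$ and carries $\alpha$, which is exactly what $\ffa$-separability requires. The paper stops there, and you can too; this also lets you avoid justifying that the $A_{i_j}$ jointly form a free factor system of $F'$.
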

\begin{proof}
If $\mathcal{C}$ is separable, then there exist a decomposition $\free = F \ast F^{\p}$ such that each conjugacy class in $\mathcal{C}$ is contained either in $F$ or $F^{\p}$. Suppose $\alpha_i \in F$. Then we claim that $A_i$ is contained in $F$ up to conjugation. Suppose not. We have that $F \cap A_i \neq \emptyset$ up to conjugation. Also the intersection of two free factors is a free factor. So $\alpha_i$ is contained in a non-trivial free factor of $A_i$, which is a contradiction. Thus $\{[F], [F^{\p}]\}$ is a non-trivial free factor system containing $\ffa$ that contains the conjugacy class $\alpha$.

On the other hand if $\alpha$ is contained in a proper free factor system $\mathcal{D}$ containing $\ffa$, then $\mathcal{C}$ is separable. 
\end{proof}

\begin{definition}[Whitehead Graph \cite{W:WhiteheadAlgorithm}] Given a basis $\mathfrak{B}$ of $\free$, the Whitehead graph of a collection $\mathcal{C}$ of conjugacy classes, denoted $Wh(\mathcal{C})$, is defined as follows: the vertices are given by basis elements and their inverses. There is an edge connecting vertices $x$ and $y$ if $\overline{x}y$ is a subword of a conjugacy class in $\mathcal{C}$. 
\end{definition}

\begin{theorem}[{\cite[Theorem 4.2]{S:WhiteheadAlgorithm}}]\label{T:Stallings}
Let $\mathcal{C}$ be a collection of conjugacy classes in $[\free]$. If $Wh(\mathcal{C})$ is connected and $\mathcal{C}$ is separable, then there is a cut vertex in $Wh(\mathcal{C})$.  
\end{theorem}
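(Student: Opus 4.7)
This is a classical theorem of Whitehead; I would prove it via his peak-reduction algorithm, by contradiction. Assume $\mathcal{C}$ is separable with $\free = F \ast F'$ and that $Wh(\mathcal{C})$ (relative to a fixed basis $\mathfrak{B}$) is connected; the plan is to produce a cut vertex.

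First I would reduce to a length-minimal basis. Among all bases of $\free$, the total cyclic word length $\|\mathcal{C}\|_{\mathfrak{B}}$ attains a minimum, and replacing $\mathfrak{B}$ by a minimizer related to it by length-preserving Whitehead automorphisms does not affect the conclusion (such moves induce graph isomorphisms of the Whitehead graph and so preserve both connectedness and the existence of a cut vertex). So without loss of generality $\mathfrak{B}$ minimizes $\|\mathcal{C}\|$. Separately, choose a basis $\mathfrak{B}'$ adapted to the splitting $\free = F \ast F'$, namely a disjoint union of bases for $F$ and $F'$. Then $Wh_{\mathfrak{B}'}(\mathcal{C})$ has no edge joining the $F$-half of $(\mathfrak{B}')^{\pm}$ to the $F'$-half, since no $c \in \mathcal{C}$ has a syllable straddling the two factors; in particular $Wh_{\mathfrak{B}'}(\mathcal{C})$ is disconnected.

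Next I would run Whitehead's algorithm on the pair $(\mathfrak{B}, \mathfrak{B}')$. Write the change of basis as a product of Whitehead automorphisms, and invoke the peak-reduction theorem: the product can be rearranged so that the sequence of lengths $\|\mathcal{C}\|$ along the way is unimodal. Because $\mathfrak{B}$ is length-minimal, the sequence starts flat or ascending; because $\mathfrak{B}'$ has a Whitehead graph strictly worse in connectivity than $\mathfrak{B}$, at some step a length-strict-decrease Whitehead automorphism $\sigma = (W, a)$ must occur at a basis that still has its Whitehead graph connected. Transporting this strict-decrease back to $\mathfrak{B}$ along the length-preserving moves (which connect any two length-minimizers by Whitehead's theorem), one obtains a Whitehead automorphism from $\mathfrak{B}$ which does not strictly decrease the length (by minimality) and yet whose standard length-change formula
$$\|\sigma \mathcal{C}\|_{\mathfrak{B}} - \|\mathcal{C}\|_{\mathfrak{B}} \;=\; (\text{boundary count of } W \text{ in } \mathcal{C}) \;-\; 2 \cdot \bigl|E_{Wh_{\mathfrak{B}}(\mathcal{C})}(W, W^c)\bigr|$$
forces equality to be achieved exactly at a peak. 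Unpacking this equality: every edge of $Wh_{\mathfrak{B}}(\mathcal{C})$ joining $W$ to $W^c$ meets the multiplier $a$, so $a$ is a cut vertex of $Wh_{\mathfrak{B}}(\mathcal{C})$.

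This produces the required cut vertex and completes the argument. The main obstacle, and the heart of Whitehead's theorem, is the bookkeeping in the penultimate step: tracing the strict length-decrease through the rearrangement of the peak, tracking how the multiplier $a$ is relabeled under the length-preserving moves, and showing it survives as an honest cut vertex of $Wh_{\mathfrak{B}}(\mathcal{C})$ rather than merely landing in a separating set. This is where the precise Whitehead length-change formula and the peak case of the reduction algorithm have to be handled carefully; everything else in the proof is setup and reduction to this standard combinatorial lemma.
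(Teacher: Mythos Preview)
The paper does not prove this theorem at all; it is quoted verbatim from Stallings' paper and used as a black box. There is therefore no proof in the paper to compare your proposal against.

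For what it is worth, Stallings' own argument is topological rather than combinatorial: he realizes $\free$ as $\pi_1$ of a handlebody, reads the Whitehead graph as the intersection pattern of the curves representing $\mathcal{C}$ with the boundary of a ball obtained by cutting along a disk system, and uses the free splitting to produce an essential disk disjoint from $\mathcal{C}$; an innermost-arc argument on that disk then exhibits the cut vertex directly. Your peak-reduction approach is in the spirit of Whitehead's original work and can be made to succeed, but as written it has a gap. Once you have taken $\mathfrak{B}$ to be length-minimal, the peak-reduced chain from $\mathfrak{B}$ to $\mathfrak{B}'$ is monotone non-decreasing, so there is no ``strict-decrease Whitehead automorphism'' available anywhere that you could transport back to $\mathfrak{B}$; the sentence beginning ``Transporting this strict-decrease back to $\mathfrak{B}$'' does not parse. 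The standard combinatorial route instead argues the contrapositive: if $Wh_{\mathfrak{B}}(\mathcal{C})$ is connected with no cut vertex, the length-change formula shows every non-permutation Whitehead move \emph{strictly increases} length, and one checks separately that length-preserving Whitehead moves preserve the property ``connected with no cut vertex''; peak reduction then forces every basis reachable from $\mathfrak{B}$ to have connected Whitehead graph, contradicting the existence of the adapted basis $\mathfrak{B}'$.
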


\begin{definition}[Relative Whitehead Graph]
For each $[A_i] \in \ffa$ fix filling conjugacy classes $\alpha_i \in [A_i]$. The relative Whitehead graph of a conjugacy class $\alpha \in [\free \setminus \ffa]$, denoted $Wh(\alpha, \ffa)$, is defined as the Whitehead graph of the collection $\{\alpha, \alpha_1, \ldots, \alpha_k\}$.  
\end{definition}

Note that even though we fix some filling conjugacy classes to define the relative Whitehead graph, detecting $\ffa$-separability of $\alpha$ is independent of them by Lemma~\ref{L:Separability}. 
\subsection{A closed subspace of $\prc$}\label{subsec:A closed subset}

In the absolute case, when a fully irreducible outer automorphism $\Psi$ is a pseudo-Anosov on a surface with one boundary component, the measured current corresponding to the boundary conjugacy class in the space of projectivized measured currents $\mc$ is fixed under the action of $\Psi$. Thus in \cite{Martin}, a closed subspace is considered which is the closure of all primitive conjugacy classes in $\mc$.
For the same reason, we pass to a smaller closed $\relOut$-invariant subspace of $\prc$. Let $$\mrc = \overline{\{ [\eta_\alpha] \in \prc| \alpha \text{ is } \ffa\text{-separable}\}}$$

\begin{lemma}
$[\eta_\alpha] \in \prc$ is in $\mrc$ if and only if $\alpha$ is $\ffa$-separable. \end{lemma}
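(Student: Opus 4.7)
The backward direction is immediate: if $\alpha$ is $\ffa$-separable, then $[\eta_\alpha]$ is one of the generators of $\mrc$ and hence lies in $\mrc$. For the forward direction I argue the contrapositive, showing that if $\alpha$ is not $\ffa$-separable then $[\eta_\alpha] \notin \mrc$. The plan has two parts: first, after replacing $\alpha$ by an element of its $\relOut$-orbit, show that the relative Whitehead graph $Wh(\alpha,\ffa)$ in the basis $\RelativeBasis$ is connected with no cut vertex; second, use convergence of currents to transfer this combinatorial property to any nearby rational current.

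For the first part, both $\mrc$ and $\ffa$-separability are $\relOut$-invariant, so I may replace $\alpha$ by $\Psi(\alpha)$ for some $\Psi \in \relOut$. A length-minimization using Whitehead automorphisms inside $\relOut$ lets me assume $\mathcal{C}_\alpha := \{\alpha,\alpha_1,\ldots,\alpha_k\}$ has minimal total length in the basis $\RelativeBasis$. At this minimum the collection $\mathcal{C}_\alpha$ is non-separable by Lemma~\ref{L:Separability}, and the contrapositive of Stallings's Theorem~\ref{T:Stallings} then forces $Wh(\mathcal{C}_\alpha)$ to be connected without cut vertex.

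For the second part, suppose for contradiction that $[\eta_{\alpha_n}] \to [\eta_\alpha]$ in $\prc$ with each $\alpha_n$ being $\ffa$-separable, rescaled so $c_n \eta_{\alpha_n} \to \eta_\alpha$ in $\rc$. Since $\alpha \in \free \setminus \ffa$, so is $\alpha^2$, and $\eta_\alpha(\alpha^2) \geq 1$; convergence gives $\eta_{\alpha_n}(\alpha^2) > 0$ for all $n$ large, i.e.\ $\alpha^2$ is a cyclic subword of $\alpha_n$. In particular every length-$2$ cyclic subword of $\alpha$ (including the wrap-around one) is a cyclic subword of $\alpha_n$. Since the $\alpha_i$-contribution to the relative Whitehead graph is identical in $Wh(\mathcal{C}_\alpha)$ and $Wh(\mathcal{C}_{\alpha_n})$, this yields $Wh(\mathcal{C}_\alpha) \subseteq Wh(\mathcal{C}_{\alpha_n})$ on the common vertex set $\RelativeBasis^{\pm}$. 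Adding edges to a connected graph with no cut vertex on the same vertex set preserves both properties (removing any vertex $v$ from the larger graph leaves a supergraph of the $v$-deletion of the smaller graph, which is connected), so $Wh(\mathcal{C}_{\alpha_n})$ is connected without cut vertex. By Theorem~\ref{T:Stallings} (contrapositive) and Lemma~\ref{L:Separability}, $\alpha_n$ is not $\ffa$-separable, a contradiction.

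The main obstacle is the first part, namely establishing the relative Whitehead minimization: one must ensure that whenever $Wh(\mathcal{C}_\alpha)$ in a relative basis has a cut vertex or is disconnected there exists a length-reducing Whitehead automorphism lying in $\relOut$, so that the length-minimum is reached at a basis still compatible with $\ffa$. If the relative version fails, an alternative is to run the ordinary Whitehead algorithm in $\op{Aut}(\free)$ to reach a possibly non-relative basis $\mathfrak{B}'$ where $Wh(\mathcal{C}_\alpha)$ is connected-without-cut-vertex, and to check that the subword-convergence argument goes through in $\mathfrak{B}'$ — the main subtlety there being that the relevant length-$2$ cylinder sets need to lie inside $\mathbf{Y}$.
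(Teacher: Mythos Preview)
Your overall strategy is the same as the paper's: both arguments show that for every $\ffa$-separable $v$ one has $\eta_v(w_\alpha^2)=0$, because $\eta_v(w_\alpha^2)>0$ would force $Wh(\alpha,\ffa)\subseteq Wh(v,\ffa)$ and then Theorem~\ref{T:Stallings} with Lemma~\ref{L:Separability} would make $v$ non-$\ffa$-separable; by continuity $\eta(w_\alpha^2)=0$ on all of $\mrc$, while $\eta_\alpha(w_\alpha^2)>0$.

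The only real difference is your first part. The paper does \emph{not} perform a minimization step or move $\alpha$ by $\relOut$: it simply asserts, citing Theorem~\ref{T:Stallings}, that for non-$\ffa$-separable $\alpha$ the relative Whitehead graph in any relative basis is connected without a cut vertex, and then works entirely in the fixed basis $\RelativeBasis$. So your second part matches the paper exactly, and your first part supplies an argument for something the paper takes for granted.

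Two small corrections on that first part. First, what you call ``the contrapositive of Stallings's Theorem~\ref{T:Stallings}'' goes the wrong way: Theorem~\ref{T:Stallings} (connected $+$ separable $\Rightarrow$ cut vertex) gives connected-without-cut-vertex $\Rightarrow$ non-separable, not the converse. What you actually need at a length-minimum is Whitehead's cut-vertex lemma (a cut vertex produces a length-reducing Whitehead automorphism), which is a different statement. Second, you are right that this is exactly where the subtlety lies: one must know that at a $\relOut$-minimum the length-reducing Whitehead move produced by a cut vertex can be taken inside $\relOut$ (equivalently, that the global $\op{Aut}(\free)$-minimum of $|\mathcal{C}_\alpha|$ is realized in some relative basis). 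The paper's one-line invocation of Stallings glosses over this same point. Your alternative route through an arbitrary basis $\mathfrak{B}'$ runs into the difficulty you name: the cylinder $C_{\mathfrak{B}'}(w_\alpha^2)$ need not be a compact subset of $\mathbf{Y}$ unless $\mathfrak{B}'$ is again relative, so the cleaner fix is to establish the relative Whitehead reduction and stay within relative bases.
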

\begin{proof}

Let's assume that $\alpha$ is not $\ffa$-separable. Then by Theorem~\ref{T:Stallings}, the relative Whitehead graph of $\alpha$ with respect to any relative basis is connected without a cut vertex. Let $w_{\alpha} \in \free \setminus \ffa$ be a cyclically reduced representative of $\alpha$. Consider a relative current $\eta_v$ where $v \in [\free \setminus \ffa]$ such that $\eta_v(w_{\alpha}^2)>0$. This means that any relative Whitehead graph of $v$ contains the Whitehead graph of $\alpha$ as a subgraph and hence is connected without cut vertices. By Theorem~\ref{T:Stallings} and Lemma~\ref{L:Separability}, this implies that $v$ is not $\ffa$-separable. Thus $\eta_v(w_{\alpha}^2) = 0$ for all $\ffa$-separable conjugacy classes $v$ in $[\free \setminus \ffa]$, which in turn implies that $\eta(w_{\alpha}) = 0$ for any $[\eta] \in \mrc$. Since $\eta_\alpha(w_{\alpha}^2)>0$, we have that $\eta_\alpha \notin \mrc$. \end{proof}
\section{Substitution Dynamics}\label{sec:Substitution}
In \cite{Queffelec}, a theory of substitution dynamics is developed for primitive substitutions to study their limiting behavior. This theory can be used to study a fully irreducible outer automorphism by viewing it as a substitution. In Section~\ref{subsec:AppendixSubstitution} of the Appendix, we develop a theory of substitution dynamics for a different class of substitutions in order to study outer automorphisms relative to a free factor system. Here we state the main result (Proposition~\ref{P:TrainTrackSubstitution}) from Section~\ref{subsec:AppendixSubstitution}.

For $\gamma$ and $\alpha$ two paths in a graph $G$, let $(\gamma, \alpha)$ be the number of occurrences of $\gamma$ in $\alpha$. 

\begin{proposition}\label{P:Train track frequency}
Let $\phi: G \to G$ be a completely split train track map. Let $a$ be an edge in an EG stratum $H_r$ such that $\phi(a)$ starts with $a$, and let $\rho_a := \lim_{n \to \infty}\phi^n(a)$. Let $\gamma$ be a path in $G_r$ that crosses $H_r$. Then $$\lim_{n \to \infty} \frac{(\gamma, \phi^n(a))}{\lambda^n} =:d_{\gamma,a}$$ exists and is non-negative. Here $\lambda$ is the Perron-Frobenius eigenvalue of the aperiodic EG stratum $H_r$. If $b \in H_r$ is another edge, then for every $\gamma$ as above, $$d_{\gamma,b} = \kappa d_{\gamma,a}$$ where $\kappa$ is a constant with $\kappa = \kappa(a, b, \phi|_{H_r})$.
\end{proposition}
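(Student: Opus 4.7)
The plan is to view $\phi$ as a morphism on the alphabet of complete-splitting atoms --- individual edges, INPs, exceptional paths, and maximal taken connecting paths in zero strata --- and reduce to the classical Perron--Frobenius theorem applied to the primitive block $M_r$ of this substitution, with eigenvalue $\lambda > 1$. The crucial simplification, enabled by the hypothesis that $\gamma$ crosses $H_r$, is that the only atom types $\tau$ for which $(\gamma, \phi^N(\tau)) > 0$ can hold are the edges of $H_r$ itself and, if it exists, the height-$r$ INP $\rho_r$; all exceptional paths and all other lower-stratum atoms lie entirely in $G_{r-1}$ and hence cannot contain any $\gamma$-occurrence. Moreover $\rho_r$ is $\phi$-invariant by Fact~\ref{F:Facts about CTs}(1), so $\phi^N(\rho_r) = \rho_r$ for every $N$.

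First I would fix a large integer $N$ and decompose $\phi^n(a) = \phi^N(\tau_1)\phi^N(\tau_2)\cdots\phi^N(\tau_{L_{n-N}})$ using the complete splitting $\phi^{n-N}(a) = \tau_1 \tau_2 \cdots \tau_{L_{n-N}}$. Every $\gamma$-occurrence in $\phi^n(a)$ is either interior to some $\phi^N(\tau_i)$ or straddles a boundary between two consecutive pieces. A straddling occurrence requires at least one of the two adjacent $\tau_i$'s to be an $H_r$-edge or $\rho_r$, and the number of such boundaries is at most $2\bigl(N_{H_r}(\phi^{n-N}(a)) + N_{\rho_r}(\phi^{n-N}(a))\bigr) = O(\lambda^{n-N})$ by Perron--Frobenius applied to the extended transition matrix. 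Each such boundary contributes at most $|\gamma|-1$ straddling occurrences, so after division by $\lambda^n$ their total contribution is $O(\lambda^{-N})$, which vanishes as $N \to \infty$. The interior count is the finite sum
$$
(\gamma, \phi^n(a))^{\mathrm{int}} = \sum_{e \in H_r} N_e(\phi^{n-N}(a)) \cdot (\gamma, \phi^N(e)) + N_{\rho_r}(\phi^{n-N}(a)) \cdot (\gamma, \rho_r).
$$

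By Perron--Frobenius applied to $M_r$ (extended by the single invariant letter $\rho_r$), we have $N_e(\phi^{n-N}(a))/\lambda^{n-N} \to u_a v_e$ for every $H_r$-edge $e$, and similarly $N_{\rho_r}(\phi^{n-N}(a))/\lambda^{n-N} \to u_a v_{\rho_r}$, where $u, v$ are the positive left and right PF eigenvectors of the extended matrix. Combining these with the boundary estimate yields
$$
\lim_{n \to \infty} \frac{(\gamma, \phi^n(a))}{\lambda^n} = u_a \cdot \frac{1}{\lambda^N}\left[\sum_{e \in H_r} v_e\,(\gamma, \phi^N(e)) + v_{\rho_r}\,(\gamma, \rho_r)\right] + O(\lambda^{-N}).
$$
Comparing this identity for two values $N < N'$ shows that the two bracketed expressions differ by $O(\lambda^{-N})$, so the right-hand side is Cauchy in $N$. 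Letting $N \to \infty$ gives a well-defined nonnegative limit $d_{\gamma, a}$.

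For the final assertion, the limit just obtained is of the form $d_{\gamma, a} = u_a \cdot X_\gamma$ with $X_\gamma$ independent of the initial $H_r$-edge; hence for any other $H_r$-edge $b$ we have $d_{\gamma, b} = (u_b/u_a) \cdot d_{\gamma, a}$, and $\kappa := u_b/u_a$ depends only on $a$, $b$, and $\phi|_{H_r}$. The main technical obstacle is the rigorous Cauchy step --- uniformly controlling the boundary error terms across all $n$ and $N$ so the iterated limits may be exchanged, and carefully handling atoms of unbounded length such as exceptional paths (which, fortunately, make no contribution because $\gamma$ crosses $H_r$ but they do not). This bookkeeping is precisely what Proposition~\ref{P:TrainTrackSubstitution} in the appendix formalizes: substitution dynamics for the block-triangular ``almost-primitive'' substitutions arising from completely split train track maps with a distinguished EG stratum.
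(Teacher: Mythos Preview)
Your argument is correct and takes a genuinely different, more direct route than the paper's. The paper (in the appendix, Proposition~\ref{P:TrainTrackSubstitution}) builds a general substitution-dynamics framework: for each path $\gamma$ it constructs a \emph{finite} alphabet $\mathbb{A}_\gamma$ of splitting units (collapsing exceptional paths of width beyond a $\gamma$-dependent threshold into a single symbol), encodes every occurrence of $\gamma$ in $\rho_a$ as a word on $\mathbb{A}_\gamma$ via the relation $r(\gamma)$, and then invokes the eigenvalue analysis of Proposition~\ref{P:Eigenvalues} and Lemma~\ref{L:B_i} to show that the block $\mathcal{B}_l$ of $M_l$ has $\lambda$ as its unique top eigenvalue, whence the frequencies exist. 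Your two-scale decomposition sidesteps all of this machinery by exploiting one structural fact: every splitting unit of $\phi^{n-N}(a)$ other than an $H_r$-edge has tightened $\phi^N$-image lying entirely in $G_{r-1}$, so any occurrence of $\gamma$ is either interior to $\phi^N$ of some $H_r$-edge or straddles one of the $O(\lambda^{n-N})$ boundaries adjacent to such an edge. Classical Perron--Frobenius on the primitive block $M_r$ alone then suffices. What the paper's framework buys is a uniform recipe that also outputs Kirchhoff's law (Lemma~\ref{Kirchhoff'sLaw}) and handles, in the same breath, frequencies of words in lower strata; your approach is leaner for the specific claim at hand.

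Two small corrections. First, the height-$r$ INP $\rho_r$ never appears as a splitting unit of $\phi^n(a)$: since $a\in H_r$, the path $\phi^n(a)$ is $r$-legal, while $\rho_r$ contains an $r$-illegal turn. Hence $N_{\rho_r}\equiv 0$ and the ``extended'' matrix is unnecessary (and in any case is only block-diagonal, not primitive, so speaking of its positive PF eigenvector is off). Second, your phrasing ``a straddling occurrence requires at least one of the two adjacent $\tau_i$'s to be an $H_r$-edge'' is not quite what you use: a non-interior occurrence of $\gamma$ may cross several boundaries between consecutive \emph{lower}-stratum atoms, but since it contains an $H_r$-edge it must meet some $\phi^N(\tau_j)$ with $\tau_j\in H_r$ and, not being interior to it, cross one of the two boundaries of that block. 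With this corrected assignment your bound of $2(|\gamma|-1)\,N_{H_r}(\phi^{n-N}(a))$ on straddling occurrences goes through, and the $\limsup/\liminf$ squeeze gives the limit without any delicate exchange of iterated limits.
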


In general, it is possible that $\gamma$ grows faster than $\lambda$ due to the presence of subpaths in $G_{r-1}$ that grow faster. The point of the above proposition is to ignore the contribution to the growth of $\gamma$ from the lower stratum but still be able to compute frequencies of paths that cross $H_r$ and are not necessarily completely contained in $H_r$.    
\section{Stable and unstable relative current}
In this section, we define the stable and unstable relative currents associated to a fully irreducible outer automorphism relative to $\ffa$. Before we state the general result let's look at some examples. The three examples that follow illustrate the cases when the growth in the stratum corresponding to $\ffa$ is less than, greater than and equal to the growth in the top EG stratum. 

\begin{example}
Let $F_3 = \la a, b, c \ra$. Let $G$ be the rose on three petals labeled $a,b$ and $c$. Consider an outer automorphism $\oo$ given by a train track representative $\phi: G \to G$, where 
$$\phi(a) = a, \phi(b) = bac, \phi(c) = cbac.$$   
Let $\ffa = \{[\la a \ra]\}$. The transition matrix for $\phi$ is given by 
\begin{center}
\begin{tabular}{r c}
&$b \quad c \quad a$ \\
$M = $ & $\begin{bmatrix} 1 & 1 &0  \\ 1 &2 &0 \\1 &1 &1 \end{bmatrix}.$
\end{tabular} \end{center}
Note that $\oo$ is not fully irreducible relative to $\ffa$ because the free factor system $\{[\la b, ac\ra],[\la a \ra]\}$ is $\oo$-invariant. But it is still instructive to understand the limiting behavior in this simple case. 

Let $\rho_b = \lim_{n \to \infty}\phi^n(b)$ be a ray that is fixed by $\phi$. We can view $\phi$ as a substitution $\zeta$ on the alphabet $\mathbb{A} = \{a, b, c\}$. Let $\mathbb{A}_l$ be the set of words of length $l$ on $\mathbb{A}$ that appear in $\rho_b$. For example, $\mathbb{A}_2 = \{ba, ca, cb, ac\}$. Note that the sets $\mathbb{A}_l$ are independent of the specific choice $b$. We define a substitution $\zeta_l$ on $\mathbb{A}_l$ as follows: let $w \in \mathbb{A}_l$ start with $x \in \mathbb{A}$. Then $\zeta_l(w)$ consists of the ordered list of the first $|\zeta(x)|$ subwords of length $l$ of the word $\zeta(w)$. For example, $\zeta_2(ba) = ba \cdot ac \cdot ca$. Let $M_l$ be the transition matrix of $\zeta_l$ and let $\mathcal{B}_l$ be the transition matrix for $\zeta_l$ restricted to words in $\free \setminus \ffa$. We want to calculate the frequency of occurrences of words in $\free \setminus \ffa$ that appear in $\rho_b$.

Let $w \in \mathbb{A}_l$ and let $\beta$ be a word of length $l$ that starts with $b$. Then $$\lim_{n \to \infty} \frac{(w, \phi^n(b))}{\lambda^n} = \lim_{n \to \infty} \frac{M_l^n(w, \beta)}{\lambda^n} = \lim_{n \to \infty} \frac{\mathcal{B}_l^n(w, \beta)}{\lambda^n}=: d_{w,b}$$  
Here $\lambda$ is the PF-eigenvalue of the top EG stratum. See Section~\ref{subsec:AppendixSubstitution} for detailed explanation. For example, in length one and two we have 
\begin{center}
\begin{tabular}{r c}
& $b \quad c$ \\ $\mathcal{B}_1 =$ & $\begin{bmatrix} 1 & 1  \\ 1 &2 \end{bmatrix},$ 
\end{tabular} 
\begin{tabular}{r c}
& $ba \,\,\,  ca \,\,\,  cb \,\,\, ac$ \\ $\mathcal{B}_2 =$ &  $\begin{bmatrix} 1 & 1 &1 & 0 \\ 1 &1 &0 &0 \\0 &1 &2 &0 \\ 1 &1 &1 &1 \end{bmatrix}.$
\end{tabular} 
\end{center}

We take $\beta = b$ and $\beta = ba$ for length one and length two words respectively. Then 
\begin{flalign*} 
(b, \rho_b) = \frac{(5 - \sqrt{5})}{10}, & \quad (c, \rho_b) = \frac{1}{\sqrt{5}}, \\ 
(ac, \rho_b)  = \frac{1}{\sqrt{5}}, & \quad (ba, \rho_b) = \frac{(5 - \sqrt{5})}{10}, \\  (ca, \rho_b)  = \frac{(-5 +3 \sqrt{5})}{10}, & \quad(cb, \rho_b) = \frac{(5 - \sqrt{5})}{10}.
\end{flalign*}
We get $(b, \rho_b) = (ba, \rho_b)$ and $(c, \rho_b) = (ca, \rho_b)+(cb, \rho_b)$ which indicates that additivity holds. One way to calculate the above numbers is to compute the Jordan decomposition of the matrix $\mathcal{B}_l$. 

\end{example}

\begin{example}
Let $F_4 = \la a, b, c, d \ra$. Let $G$ be the rose on four petals labeled $a,b,c,d$. Consider an outer automorphism $\oo$ given by a train track representative $\phi: G \to G$ by $$\phi(a) = abbab, \phi(b) = bababbab, \phi(c) = cad, \phi(d) = dcad.$$   
Let $\ffa = \{[\la a,b \ra]\}$. 
The transition matrix for $\phi$ is given by 
\begin{center}
\begin{tabular}{r c}
& $c\quad d \quad a \quad b$ \\ $M = $ & $\begin{bmatrix}   
	    1 & 1 & 0 & 0  \\
	    1 & 2 & 0 & 0 \\ 
	    1 & 1 & 2 & 3 \\ 
	    0 & 0 & 3 & 5
	    \end{bmatrix}$
\end{tabular}
\end{center}

Let $\rho_c = \lim_{n \to \infty}\phi^n(c)$. We can view $\phi$ as a substitution on the alphabet $\mathbb{A} = \{a, b, c,d\}$. Let $\mathbb{A}_l$ be the set of words of length $l$ on $\mathbb{A}$ that appear in $\rho_c$. We want to calculate the frequency of occurrences of words, which cross $c$ and $d$, in $\rho_c$. Let $w \in \mathbb{A}_l$ and let $\gamma$ be a word of length $l$ that starts with $c$. Using the same notation as in the previous example we have $$\lim_{n \to \infty} \frac{(w, \phi^n(c))}{\lambda^n} = \lim_{n \to \infty} \frac{M_l^n(w, \gamma)}{\lambda^n} = \lim_{n \to \infty} \frac{\mathcal{B}_l^n(w, \gamma)}{\lambda^n} =: d_{w,c}$$
For example, in length two we have $\mathbb{A}_2 = \{ab, ba, bb, ad, bd, ca, da, dc\}$ and $\mathcal{B}_2 =\{ad, bd, ca, da, dc\}$. We get the matrices  
\begin{center}
\begin{tabular}{r c}
& $b \quad c$ \\ $\mathcal{B}_1 =$ & $\begin{bmatrix} 1 & 1  \\ 1 &2 \end{bmatrix},$ 
\end{tabular} 
\begin{tabular}{r c}
& $ca \,\,\,da  \,\,\,dc \,\,\, ad \,\,\,bd $ \\ $\mathcal{B}_2 =$ & $\begin{bmatrix}
   1 &1 &1 &0 &0\\
   1 &1 &0 &0 &0 \\
   0 &1 &2 &0 &0 \\
   1 &1 &1 &0 &0 \\
   0 &0 &0 &1 &1 
  \end{bmatrix} $\end{tabular} 
\end{center}
and compute the frequencies as in the previous example. 
\end{example}

\begin{example}
Let $F_4 = \la a, b, c, d \ra$. Let $G$ be the rose on four petals labeled $a,b,c,d$. Consider an outer automorphism $\oo$ given by a train track representative $\phi: G \to G$ by $$\phi(a) = ab, \phi(b)= bab, \phi(c) = cad, \phi(d) = dcad.$$  
Let $\ffa = \{[\la a,b \ra]\}$. 
The transition matrix for $\phi$ is given by \begin{center}
\begin{tabular}{r c}
& $c\quad d \quad a \quad b$ \\ $M = $ & $\begin{bmatrix}   
	    1 & 1 & 0 & 0  \\
	    1 & 2 & 0 & 0 \\ 
	    1 & 1 & 1 & 1 \\ 
	    0 & 0 & 1 & 2
	    \end{bmatrix}$
\end{tabular}
\end{center}

Let $\rho_c = \lim_{n \to \infty}\phi^n(c)$. We can view $\phi$ as a substitution on the alphabet $\mathbb{A} = \{a, b, c,d\}$. As before we have $$\lim_{n \to \infty} \frac{(w, \phi^n(c))}{\lambda^n} = \lim_{n \to \infty} \frac{M_l^n(w, \gamma)}{\lambda^n} = \lim_{n \to \infty} \frac{\mathcal{B}_l^n(w, \gamma)}{\lambda^n} =: d_{w,c}$$
where $\lambda$ is the PF-eigenvalue of the top stratum. For length two, we have $\mathbb{A}_2 = \{ab, ba, bb, ad, bd, ca, da, dc\}$ and $\mathcal{B}_2 =\{ad, bd, ca, da, dc\}$. We get the matrices  
\begin{center}
\begin{tabular}{r c}
& $b \quad c$ \\ $\mathcal{B}_1 =$ & $\begin{bmatrix} 1 & 1  \\ 1 &2 \end{bmatrix},$ 
\end{tabular}
\begin{tabular}{r c}
& $ca \,\,\,da  \,\,\,dc \,\,\, ad \,\,\,bd $ \\ $\mathcal{B}_2 =$ & $\begin{bmatrix}
   1 &1 &1 &0 &0\\
   1 &1 &0 &0 &0 \\
   0 &1 &2 &0 &0 \\
   1 &1 &1 &0 &0 \\
   0 &0 &0 &1 &1 
  \end{bmatrix} $\end{tabular}
\end{center}
and compute the frequencies as above. 

In all the above examples, the topological representatives of the outer automorphisms were defined on roses whose universal covers are Cayley graphs. Thus, to associate a relative current to such outer automorphisms, we can use coordinates coming from the respective Cayley graphs. In general, a topological representative of an outer automorphism is defined on some marked graph in outer space. Thus, before we show how to associate limiting currents to relatively fully irreducible outer automorphims, we define coordinates with respect to a marked graph. 

\begin{definition}[Coordinates with respect to a marked graph] Let $G$ be a marked metric graph in Culler-Vogtmann's outer space, such that $G$ has a subgraph $\Gamma$ with $\mathcal{F}(\Gamma) = \ffa$. Let $g: \mathcal{R} \to G$ be the marking of $G$. Here $\mathcal{R}$ is the quotient of $\op{Cay}(\free, \RelativeBasis)$ under the action of $\free$. Let $\widetilde{G}$ be the universal cover of $G$. The map $g$ lifts to an $\free$-equivariant map $\widetilde{g}\colon \op{Cay}(\free, \RelativeBasis)  \to \widetilde{G}$. The map $\widetilde{g}$ identifies $\partial^2 \widetilde{G}$ with $\partial^2 \free$ and $\partial^2 \Gamma$ with $\partial^2 \ffa$. Given an edge-path $v$ in $\widetilde{G}$, let $$C(v):= \{(x,y) \in \partial^2 \free \, | \, v \subset (\widetilde{g}(x),\widetilde{g}(x))\}$$ be a compact open set of $\partial^2 \free$ determined by the subpath $v$ of $\widetilde{G}$. For a relative current $\eta$ and a path $v$ of $\widetilde{G}$ that is not entirely contained in the lift of $\Gamma$, $\eta(v)$ is defined to be equal to $\eta(C(v))$. Since $\eta$ is $\free$-equivariant, we may consider $v$ to be a reduced edge-path in $G$ itself. The collection of compact open sets $C(v)$ for all paths $v$ in $G$ that are not entirely contained in $\Gamma$ contains the cylinder sets determined by words in $\free$ that determine a basis for topology of $\partial^2 \free$. Since a relative current is uniquely determined by its values on elements in $\free\setminus \ffa$, it is also uniquely determined by its values on compact open sets determined by reduced paths $v$ in $G$ that are not entirely contained in $\Gamma$.      
\end{definition}

The next lemma defines a limiting current for a relative fully irreducible outer automorphism. 
\begin{lemma}\label{L:Limit Current}
Let $\phi: G \to G$ be a completely split train track representative of $\oo$, a fully irreducible outer automorphism relative to $\ffa$. Let $a$ be an edge in the top EG stratum $H_r$ such that $\rho_a$ is fixed under $\phi$. Let $v$ be any reduced edge path in $G$ that crosses $H_r$. Let $d_{v,a}$ be the frequency of occurrence of $v$ in $\rho_a$. Then the set of values  $$d_{v,a}+d_{\overline{v},a} =:\eta^a_{\phi}(v)$$ define a unique current $\eta^a_{\phi}$ relative to $\ffa$. That is, 
\begin{enumerate}[(a)]
\item $ \eta^a_{\phi}(v)\geq 0$,
\item $\eta^a_{\phi}(v) = \eta^a_{\phi}(\overline{v})$,
\item $\displaystyle{\eta^a_{\phi}(v) = \sum_{e \in E} \eta^a_{\phi}(ve)}$ where $E$ is the set of edges of $G$ incident at end point of $v$ and $e$ is not equal to the inverse of the terminal edge of $v$.   
\end{enumerate}
For an edge $b \neq a$ in $H_r$, we have that $\eta^b_{\phi} = \kappa \eta^a_{\phi}$ for some constant $\kappa(a, b, \phi|_{H_r})$. Thus for every fully irreducible outer automorphism relative to $\ffa$, we get a unique projective relative current, denoted $[\StableCurrent]=[\eta^a_{\phi}]$.  
\end{lemma}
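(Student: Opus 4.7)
The plan is to deduce (a), (b), (c) directly from Proposition~\ref{P:Train track frequency}, then extend $\eta^a_\phi$ to a relative current via the lemma at the end of Subsection 3.3, and finally obtain the scaling relation from the second half of Proposition~\ref{P:Train track frequency}. Since $H_r$ is the top stratum, $G = G_r$, so any reduced path $v$ in $G$ that crosses $H_r$ satisfies the hypotheses of that proposition, as does $\overline v$. This immediately gives existence and non-negativity of both $d_{v,a}$ and $d_{\overline v, a}$, yielding (a); property (b) is tautological from the symmetric definition of $\eta^a_\phi$.

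For (c), I would count occurrences of $v$ inside the prefix $\phi^n(a)$ of $\rho_a$. Every such occurrence is followed by a unique admissible edge $e \in E$, giving an occurrence of $ve$, except possibly the one occurrence flush with the terminal end of $\phi^n(a)$; hence
\[
\Bigl| (v,\phi^n(a)) - \sum_{e \in E} (ve,\phi^n(a)) \Bigr| \leq 1.
\]
Dividing by $\lambda^n$ and passing to the limit gives $d_{v,a} = \sum_{e \in E} d_{ve,a}$. Running the identical count for $\overline v$ and observing that $\overline{ve} = \overline e\,\overline v$, so that as $e$ varies over $E$ the letter $\overline e$ varies over precisely the admissible precursors of $\overline v$, one obtains $d_{\overline v, a} = \sum_{e \in E} d_{\overline{ve},a}$. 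Adding the two identities establishes the additivity claim (c).

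With (a)--(c) in hand on reduced paths in $G$, the marked-graph coordinates introduced just before this lemma promote $\eta^a_\phi$ to an $\free$-invariant, flip-invariant, non-negative, additive function on the compact-open sets $C(v) \subset \mathbf Y$; the lemma at the end of Subsection 3.3, which says that any such function determines a relative current, produces a unique $\eta^a_\phi \in \rc$. For the final clause, the scaling half of Proposition~\ref{P:Train track frequency} supplies a scalar $\kappa = \kappa(a,b,\phi|_{H_r})$ with $d_{v,b} = \kappa\, d_{v,a}$ for every $v$ crossing $H_r$; the same $\kappa$ applies to $\overline v$, so $\eta^b_\phi = \kappa\, \eta^a_\phi$ and $[\StableCurrent] := [\eta^a_\phi]$ is a well-defined element of $\prc$. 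The only real obstacle is the bookkeeping check that the boundary occurrences contribute $O(1)$ uniformly in $n$: any occurrence of $v$ that cannot be extended to $ve$ inside $\phi^n(a)$ must begin among the last $|v|-1$ edges of $\phi^n(a)$, where at most one position supports a copy of $v$, so the error vanishes upon dividing by $\lambda^n$.
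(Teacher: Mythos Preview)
Your proposal is correct and follows essentially the same path as the paper's proof: existence and non-negativity from Proposition~\ref{P:Train track frequency}, symmetry from the definition, additivity from the $O(1)$ boundary-occurrence count, uniqueness of the relative current from the marked-graph coordinates, and the scaling relation from the second clause of Proposition~\ref{P:Train track frequency}.

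The one difference worth noting is in how (c) is packaged. The paper's proof phrases the additivity step as ``applying Proposition~\ref{P:MainResultSubstitution} to the substitution provided by Proposition~\ref{P:Train track frequency}'', i.e.\ it formally routes through the substitution alphabet $\mathbb{A}_\gamma$ of splitting units and invokes the Kirchhoff lemma there. But the Kirchhoff identity for edge-extensions of actual paths in $G$ (as opposed to letter-extensions in the substitution alphabet) is established by exactly the direct count you give --- this is stated at the end of the proof of Proposition~\ref{P:TrainTrackSubstitution}: ``Kirchhoff's law still holds for frequencies of paths in $\rho_a$ because $(\gamma, \phi^n(a))$ and $\sum_{e \in G_r}(\gamma e, \phi^n(a))$ differ by a bounded amount.'' So your argument is not really a different route; it just surfaces the one-line counting argument that the paper leaves buried in the appendix, and avoids the detour through the substitution alphabet for this particular step.
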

\begin{proof}
By Proposition~\ref{P:Train track frequency}, we know that the values $d_{v,a}$ exist and are non-negative for all reduced paths $v$ in $G$ that cross $H_r$. The equation $(b)$ holds by definition of $\eta^a_{\phi}(v)$. Proposition~\ref{P:Train track frequency} provides a substitution determined by $\phi$. Applying Proposition~\ref{P:MainResultSubstitution} to this substitution we see that $\eta^a_{\phi}(v)$ satisfies Kirchoff's laws, that is, $(c)$ holds. Since a relative current is uniquely determined by its values on compact open sets in $\partial^2 \free$ determined by reduced paths in $G$ that cross $H_r$, we get a unique relative current $\eta^a_{\phi}$. Again by Proposition~\ref{P:Train track frequency}, we have $\eta^a_{\phi}(v) = \kappa \eta^b_{\phi}(v)$ for all reduced paths $v$ in $G$ that cross $H_r$ and for some constant $\kappa$. Thus the projective class $[\eta^a_{\phi}]=:[\StableCurrent]$ of the relative current $\eta^a_{\phi}$ is also unique.       
\end{proof}

The projective relative current $[\StableCurrent]$ is called the \emph{stable current} for $\oo$. The stable current for $\Phi^{-1}$, denoted $[\UnstableCurrent]$, is called the \emph{unstable current} for $\oo$.  

\section{Goodness}
In \cite{BFH:Laminations}, Bestvina, Feighn and Handel studied the legal structure of conjugacy classes under forward and backward iterates of a train track representative of a fully irreducible outer automorphism. In \cite{Brinkmann}, Brinkmann generalized some of those results to relative train track maps which we use in this section. 

Throughout this section $\oo \in \relOut$ will be a fully irreducible outer automorphism relative to $\ffa$ and $\phi: G \to G$ a completely split train track representative of $\oo$ with filtration $\Rfltr$ such that $\mathcal{F}(G_{r-1}) = \ffa$, and $H_r$ is the top EG stratum with PF eigenvalue $\PFevalue>1$. In this section, we use Facts~\ref{F:Facts about CTs} about completely split train track maps. 

In \cite{Brinkmann}, Brinkmann considers the following metric on $G$: edges in $H_r$ get length according to the PF eigenvector for the transition matrix for $H_r$, such that the smallest length is one and hence edges in $H_r$ get stretched ($r$-length) by $\PFevalue$ under the application of $\phi$. Edges in $G_{r-1}$ get length one. See Section~\ref{subsec:BCC}.

Throughout, we use the same notation for a conjugacy class in $[\free]$ and its representative in $G$ which is taken to be cyclically reduced. For a reduced path $\rho$ in $G$, the tightened image of $\rho$ is denoted by $[\phi(\rho)]$. Define $i_r(\rho)$ to be the number of $r$-illegal turns in $\rho$, $l_r(\rho)$ the $r$-length of $\rho$ and $L_r(\rho)$ the length of the longest $r$-legal segment in $\rho$. Recall from Section~\ref{subsec:BCC} that $\displaystyle{L_r^c = \frac{2BCC(\phi)}{\PFevalue-1}}$ is the critical $r$-length where $BCC(\phi)$ is the bounded cancellation constant. 

Denote by $\rho^{-k}$ a path in $G$ with the property that the tightened image of $\phi^{k}(\rho^{-k})$ is $\rho$. For a subpath $\rho$ of a path $\sigma$, let $[\phi^k(\rho)]_{\sigma}$ denote the maximal subpath of $[\phi^k(\rho)]$ contained in $[\phi^k(\sigma)]$.  

The following proposition is a generalization of \cite[Lemma 2.9]{BFH:Laminations}.
\begin{proposition}[{\cite[Lemma 6.2]{Brinkmann}}]\label{P:Brinkmann}
Let $\phi: G \to G$ be a relative train track map and let $H_r$ be an EG stratum. For every $L>0$, there exists $M(L) > 0$ such that if $\rho$ is a path in $G_r$ that crosses $H_r$, then one of the following holds:
\begin{enumerate}[(a)]
\item $[\phi^M(\rho)]$ contains an $r$-legal segment of $r$-length $>L$. 
\item $[\phi^M(\rho)]$ has fewer $r$-illegal turns. 
\item $\rho$ can be expressed as a concatenation $\tau_1 \rho^{\p} \tau_2$, where $l_r(\tau_1) \leq 2L$, $l_r(\tau_2) \leq 2L$, $i_r(\tau_1) \leq 1, i_r(\tau_2) \leq 1$, and $\rho^{\p}$ splits as a concatenation of pre-Nielsen paths (with one $r$-illegal turn each) and segments in $G_{r-1}$.  
\end{enumerate}\end{proposition}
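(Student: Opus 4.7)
The plan is to iterate $\phi$ on $\rho$ and perform a case analysis based on how the number of $r$-illegal turns and the $r$-lengths of the maximal $r$-legal subpaths evolve. Given $L > 0$, recall the critical $r$-length $L_r^c = 2 BCC(\phi)/(\PFevalue - 1)$ and, without loss of generality, assume $L \geq L_r^c$. We will choose $M = M(L)$ large enough to satisfy (i) the number of iterations needed for an $r$-legal segment of $r$-length above $L_r^c$ to stretch past $r$-length $L$, and (ii) a pigeonhole threshold for short combinatorial shapes described below.

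The first observation is that the relative train track axiom (every $r$-legal $\beta \subset H_r$ has $[\phi(\beta)]$ again $r$-legal) implies that $k \mapsto i_r([\phi^k(\rho)])$ is a non-increasing integer-valued function of $k$. If it drops strictly before iterate $M$, we are in case (b). Otherwise it is constant on $[0,M]$, meaning no $r$-illegal turn is ever resolved by cancellation with adjacent material. Decompose $\rho = \sigma_0 \sigma_1 \cdots \sigma_m$ at its $r$-illegal turns, so each $\sigma_i$ is a maximal $r$-legal subpath. Bounded cancellation forces the $r$-legal descendant of $\sigma_i$ in $[\phi(\rho)]$ to have $r$-length at least $\PFevalue \cdot l_r(\sigma_i) - 2 BCC(\phi)$; so if some $\sigma_i$ has $r$-length exceeding $L_r^c$, that segment grows without bound under iteration and, after $M$ steps, produces an $r$-legal subpath of $[\phi^M(\rho)]$ of $r$-length $> L$, placing us in case (a).

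Assume henceforth that neither (a) nor (b) holds. Then every interior $\sigma_i$ has $r$-length at most $L_r^c$, and each interior subpath $\rho^{\p}$ of $\rho$ consisting of one $r$-illegal turn flanked by two short $r$-legal pieces has bounded $r$-length, with its single $r$-illegal turn persisting under all iterates $\phi^k$, $k \leq M$. The $H_r$-skeleton of $[\phi^k(\rho^{\p})]$, obtained by quotienting out the connecting subpaths in $G_{r-1}$, ranges over a finite set whose size is controlled by $L_r^c$ and the number of edges in $H_r$. Choosing $M$ larger than this cardinality, pigeonhole produces iterates $k_1 < k_2 \leq M$ with matching $H_r$-skeletons; combined with preservation of the lone $r$-illegal turn and with controlled cancellation at each iterate, this forces $\rho^{\p}$ to be a pre-Nielsen path, possibly after peeling off a maximal $G_{r-1}$ segment as a connecting path. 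Concatenating these decompositions over the interior of $\rho$ yields the desired splitting of case (c) as a concatenation of pre-Nielsen paths (each with exactly one $r$-illegal turn) and segments in $G_{r-1}$. The boundary pieces $\tau_1, \tau_2$ absorb everything before the first and after the last pre-Nielsen factor; each contains at most one $r$-illegal turn, and the bound $l_r(\tau_i) \leq 2L$ follows from $L \geq L_r^c$ together with the length control on interior $r$-legal segments.

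The main obstacle is the pigeonhole step in the third paragraph: matching $H_r$-skeletons across iterates must be promoted to an equality of genuine paths witnessing the pre-Nielsen property, even though the $G_{r-1}$ content of the iterates can grow in complicated ways. This is precisely where one must use the remaining relative train track axioms connecting $H_r$ and $G_{r-1}$ (in particular, that the tightened image of a connecting path with endpoints in $H_r \cap G_{r-1}$ is itself a nontrivial connecting path), and make a careful choice of how to split off $G_{r-1}$ subpaths into connecting pieces versus keeping them bundled inside the pre-Nielsen factors.
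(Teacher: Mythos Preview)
The paper does not prove this proposition: it is quoted as \cite[Lemma~6.2]{Brinkmann} (a generalization of \cite[Lemma~2.9]{BFH:Laminations}) and used as a black box. There is therefore no proof in the present paper against which to compare your attempt.

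That said, your outline tracks the BFH/Brinkmann strategy faithfully up to the point you yourself flag. The reductions to cases~(a) and~(b) via monotonicity of $i_r$ and growth of super-critical $r$-legal segments are correct, as is the observation that in the remaining case the $H_r$-shadow of each one-illegal-turn block is confined to a finite set. The genuine gap is exactly the one you name in your last paragraph: matching $H_r$-skeletons at two iterates $k_1<k_2$ does \emph{not} by itself force $[\phi^{k_1}(\rho')]=[\phi^{k_2}(\rho')]$ as paths, since the connecting $G_{r-1}$ subpaths may differ and can grow without bound. Promoting skeletal periodicity to honest periodicity (and hence to the pre-Nielsen conclusion, together with the fact that the resulting decomposition is a \emph{splitting} in the technical sense) is the substantive content of Brinkmann's argument beyond the absolute case; it requires simultaneous control of the $G_{r-1}$ pieces using the second relative train track axiom. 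As written, your sketch is a correct proof for~(a) and~(b) and a reasonable heuristic for~(c), but not a proof of~(c).
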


\begin{lemma}[Backward iterations]\label{L:Backward growth}
Let $\phi: G \to G$ be a completely split train track representative of a fully irreducible outer automorphism relative to $\ffa$. Given some number $L_0>0$, there exists $M>0$, depending only on $L_0$ and $H_r$, such that for any subpath $\rho$ of an $\ffa$-separable conjugacy class $\alpha$ realized in $G_r$ with $1 \leq L_r(\rho) \leq L_0$ and $i_r(\rho) \geq 5$, we have $$\left(\frac{10}{9}\right)^n i_r(\rho) \leq i_r(\rho^{-nM})$$ for all $n>0$. \end{lemma}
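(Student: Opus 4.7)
I would prove the one-step bound $i_r(\rho^{-M}) \geq (10/9)\, i_r(\rho)$ and then iterate. Choose $L > L_0$ and take $M := M(L)$ from Proposition~\ref{P:Brinkmann}. Partition the $r$-illegal turns of $\rho$ into consecutive groups of five, producing $q := \lfloor i_r(\rho)/5 \rfloor \geq 1$ disjoint subpaths $B_1, \ldots, B_q$ of $\rho$, each crossing $H_r$, each with exactly $5$ $r$-illegal turns, and each inheriting $L_r(B_j) \leq L_0$ from $\rho$. Each $B_j$ has a corresponding preimage $B_j^{-M} \subseteq \rho^{-M}$ with $[\phi^M(B_j^{-M})] = B_j$, and these preimages are pairwise disjoint by construction.

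Now apply Proposition~\ref{P:Brinkmann} to each $B_j^{-M}$. Alternative (a) would produce an $r$-legal segment of $r$-length $>L$ in $B_j$, contradicting $L_r(B_j) \leq L_0 < L$; hence (a) is ruled out. Alternative (b) yields $i_r(B_j^{-M}) \geq i_r(B_j) + 1 = 6$, contributing (at least) one additional $r$-illegal turn per block to $\rho^{-M}$. The main obstacle is alternative (c): $B_j^{-M} = \tau_1 \rho' \tau_2$ with $l_r(\tau_i) \leq 2L$, $i_r(\tau_i) \leq 1$, and $\rho'$ a concatenation of pre-Nielsen paths (each with one $r$-illegal turn) and segments in $G_{r-1}$. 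Here the hypothesis that $\alpha$ is $\ffa$-separable is essential. By Fact~\ref{F:Facts about CTs}(2)--(3), the CT $\phi$ has at most one INP $\rho_r$ of height $r$, so every pre-Nielsen path in $\rho'$ is an iterated preimage of $\rho_r$; applying $\phi^M$ to $B_j^{-M}$ then forces $B_j$ (up to bounded end-pieces) to be a concatenation of oriented copies of $\rho_r$ interleaved with $G_{r-1}$-pieces. Propagating this local rigidity along $\alpha$, together with Fact~\ref{F:Facts about CTs}(3)(b) and the relative full irreducibility of $\oo$ (which rules out any $\oo$-invariant free factor system properly containing $\ffa$), leads to the conclusion that $\alpha$ either lies in $\ffa$ (ruled out by $\alpha \in [\free \setminus \ffa]$) or is an iterate of the closed Nielsen path $\rho_r$ (ruled out because such an $\alpha$ is not $\ffa$-separable under the fully-irreducible-relative-to-$\ffa$ hypothesis). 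Either way, (c) is excluded.

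Summing the contributions from (b) over the $q$ blocks gives
\[
i_r(\rho^{-M}) \;\geq\; i_r(\rho) + q \;\geq\; i_r(\rho) + \lfloor i_r(\rho)/5 \rfloor \;\geq\; \tfrac{10}{9}\, i_r(\rho),
\]
where the last inequality is valid for integer $i_r(\rho) \geq 5$ since $\lfloor i/5 \rfloor \geq i/9$ throughout that range. To iterate, I verify that the hypotheses pass to $\rho^{-M}$: the bound $i_r(\rho^{-M}) \geq 5$ is automatic from the just-established growth; $\rho^{-M}$ is a subpath of the $\ffa$-separable conjugacy class $\oo^{-1}([\alpha])$, since $\relOut$ preserves $\ffa$; and the bounded cancellation estimate of Section~\ref{subsec:BCC} gives $L_r(\rho^{-M}) \leq (L_0 + 2\, BCC(\phi^M))/\PFevalue^M$, which is at most $L_0$ once $M$ is taken large enough (equivalently, $L_0$ is taken at least the critical $r$-length $L_r^c$; if $L_0 < L_r^c$ one first enlarges $L_0$, which only strengthens the hypothesis). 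Applying the one-step estimate $n$ times gives $(10/9)^n\, i_r(\rho) \leq i_r(\rho^{-nM})$, as claimed. The hard part is ruling out alternative (c); everything else is a careful book-keeping of illegal turns across the block decomposition combined with bounded cancellation.
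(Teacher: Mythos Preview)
Your overall architecture---block decomposition into groups of five $r$-illegal turns, applying Proposition~\ref{P:Brinkmann} to each preimage block, and iterating---is exactly the paper's approach. The arithmetic $\lfloor i/5\rfloor \ge i/9$ for $i\ge 5$ is correct and matches the paper's $6m+i_r(\tau)\ge(10/9)i_r(\rho)$.

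The gap is in how you exclude alternative~(c). You attempt to ``propagate this local rigidity along $\alpha$'' and conclude that $\alpha$ itself must lie in $\ffa$ or be an iterate of $\rho_r$. This inference does not go through: alternative~(c) only constrains the single block $B_j^{-M}$; other blocks may well be in case~(b), so $\alpha$ can perfectly well have mixed structure. Nothing forces the rigidity of one block to spread globally. What you actually need is a \emph{local} contradiction inside the block, assuming $i_r(B_j^{-M})=5$ (the only bad case). The paper does this as follows. With $i_r(\tau_1),i_r(\tau_2)\le 1$, the middle piece $\rho'$ carries at least three pre-Nielsen paths, each an iterated preimage of the unique height-$r$ INP $\sigma$, separated by paths in $G_{r-1}$. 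Now split on whether $\sigma$ is closed. If $\sigma$ is not closed, Fact~\ref{F:Facts about CTs}(3)(a)(i) says at least one endpoint of $\sigma$ is not in $G_{r-1}$, so one cannot concatenate three copies with $G_{r-1}$-segments in between---contradiction already at the level of $\rho'$. If $\sigma$ is closed, Fact~\ref{F:Facts about CTs}(3)(b)(i) says its basepoint is not in $G_{r-1}$, so consecutive pre-Nielsen paths must abut directly, producing $\sigma^2$ inside $\alpha$. Since $\sigma$ is a height-$r$ fixed conjugacy class it is not $\ffa$-separable, hence its relative Whitehead graph is connected without cut vertex; the occurrence of $\sigma^2$ in $\alpha$ then forces the Whitehead graph of $\alpha$ to contain that of $\sigma$, contradicting $\ffa$-separability of $\alpha$.

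In short: keep your block decomposition, but replace the global ``$\alpha$ is an iterate of $\rho_r$'' claim by the paper's local endpoint/Whitehead analysis of the INP $\sigma$. Everything else in your outline (ruling out (a), the count in (b), the inductive step with $L_r(\rho^{-M})\le L_0$ via bounded cancellation, and the preservation of $\ffa$-separability under $\relOut$) is fine and matches the paper.
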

\begin{proof}
In \cite[Lemma 6.4]{Brinkmann}, Brinkmann proves the same statement for atoroidal outer automorphisms and for any path in $G_r$. The same proof follows by using Facts~\ref{F:Facts about CTs} about completely split train track representatives. 

Given $L=L_0+L_r^c$, choose $M$ as in Proposition~\ref{P:Brinkmann}. Subdivide the path $\rho$ into subpaths $\rho_1, \ldots, \rho_m, \tau$ such that $i_r(\rho_i) = 5$ and $i_r(\tau) < 5$. Let $\rho^{-M}_i$ be the pre-image of $\rho_i$ under $\phi^M$. Then $\rho^{-M}$ is the concatenation of $\rho^{-M}_i$ and $\tau^{-M}$. We claim that $i_r(\rho^{-M}_i)\geq 6$ for all $i$. Suppose for contradiction that $i_r(\rho^{-M}_i) = 5$ for some $i$. Then by Proposition~\ref{P:Brinkmann}, $\rho^{-M}_i$ splits as a concatenation of at least three pre-Nielsen paths and paths in $G_{r-1}$. By Facts~\ref{F:Facts about CTs}, every Nielsen path has period one and there is at most one INP $\sigma$ of height $r$. If $\sigma$ is not closed, then at least one end-point of $\sigma$ is not contained in $G_{r-1}$. Therefore, we cannot have three Nielsen paths in $\rho^{-M}_i$ separated by paths in $G_{r-1}$. If $\sigma$ is closed, then its end point is not in $G_{r-1}$. Since $\alpha$ is $\ffa$-separable, it cannot have two consecutive occurrences of $\sigma$ in it.  Indeed, since $\sigma$ (which is not contained in $G_{r-1}$) is fixed by $\phi$, it is not $\ffa$-separable. Therefore, its relative Whitehead graph is connected without cut points. If $\alpha$ has two consecutive occurrences of $\sigma$, then its relative Whitehead graph will also be connected without cut points but $\alpha$ is $\ffa$-separable. Therefore, $\rho$ and $\rho^{-M}_i$ cannot have two consecutive occurrences of $\sigma$.     

Thus $i_r(\rho^{-M}) \geq 6 m+ i_r(\tau) \geq (10/9) i_r(\rho)$ and the lemma follows by induction. 
\end{proof}

\begin{lemma}[{\cite[Lemma 6.5]{Brinkmann}}]\label{L:Brinkmann 6.5}
Suppose $H_r$ is an EG stratum. Given some $L>0$, there exists some constant $C>0$ such that for all paths $\rho \subset G_r$ with $1 \leq L_r(\rho) \leq L$ and $i_r(\rho)>0$, we have $$ C^{-1} i_r(\rho) \leq l_r(\rho) \leq C i_r(\rho).$$
\end{lemma}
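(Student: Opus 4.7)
The plan is to obtain the two inequalities separately, with the constant $C$ taken to be a simple linear function of $L$. Both directions reduce to a pigeonhole count once $\rho$ is decomposed along its $r$-illegal turns. Writing $k := i_r(\rho) \geq 1$, I would express $\rho = \rho_0 \rho_1 \cdots \rho_k$, where the $\rho_j$ are the maximal $r$-legal subpaths of $\rho$ and the junctions between consecutive $\rho_j$ are precisely the $k$ $r$-illegal turns of $\rho$ (some of the $\rho_j$ may be trivial).

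For the upper bound, each $\rho_j$ is $r$-legal and sits inside $\rho$, so $l_r(\rho_j) \leq L_r(\rho) \leq L$. Summing gives $l_r(\rho) \leq (k+1)L \leq 2kL$, since $k \geq 1$ and $L \geq L_r(\rho) \geq 1$. For the lower bound, I would use the observation that every $r$-illegal turn of $\rho$ must contain at least one edge in the stratum $H_r$: otherwise both of its edges lie in $G_{r-1}$, and then the turn is, by definition, not $r$-illegal. Assign to each of the $k$ $r$-illegal turns one such incident $H_r$-edge. Any edge of $H_r$ occurring in $\rho$ is incident to at most two vertices of $\rho$, hence is assigned to at most two $r$-illegal turns, so at least $k/2$ distinct $H_r$-edges appear in $\rho$. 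Because the PF eigenvector was normalized so that the smallest entry is $1$, each such edge contributes at least $1$ to $l_r(\rho)$, yielding $l_r(\rho) \geq k/2 = i_r(\rho)/2$.

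Combining the two bounds, the constant $C := 2L$ works (note $C \geq 2$ since $L \geq 1$). I do not anticipate any serious obstacle in this argument; the only step requiring a brief justification is the claim that an $r$-illegal turn uses at least one edge of $H_r$, which is immediate from the definition of $r$-legality given in the Preliminaries.
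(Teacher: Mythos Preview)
The paper does not supply its own proof of this lemma; it is simply quoted from \cite[Lemma~6.5]{Brinkmann} and used as a black box in the surrounding arguments. So there is nothing in the paper to compare your argument against.

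That said, your proposal is a correct and entirely self-contained proof. The decomposition $\rho=\rho_0\rho_1\cdots\rho_k$ along the $r$-illegal turns immediately gives the upper bound $l_r(\rho)\le(k+1)L\le 2kL$, and your pigeonhole argument for the lower bound is valid: by the definition of $r$-legality in the Preliminaries, an $r$-illegal turn is an illegal turn not contained in $G_{r-1}$, so at least one of its two edges lies in $H_r$; since each edge occurrence in $\rho$ is incident to at most two turns, you recover at least $k/2$ occurrences of $H_r$-edges, each of $r$-length $\ge 1$ by the chosen normalization of the metric. Taking $C=2L$ (and noting $L\ge 1$ since otherwise the hypothesis $1\le L_r(\rho)\le L$ is vacuous) then gives both inequalities. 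No gaps.
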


The notion of goodness was introduced in \cite{Martin} and formalized in \cite{BFH:Laminations}. 
\begin{definition}[Goodness]
Given a loop or a path $\alpha$ in $G_r$ that crosses $H_r$, the \emph{good portion}, denoted $g$, of $\alpha$ is the set of $r$-legal segments that are $r$-distance $L_r^c$ away from $r$-illegal turns. The \emph{bad portion}, denoted $b$, is the part of $\alpha$ which is $r$-distance less than equal to $L_r^c$ from an $r$-illegal turn. The $r$-length of $\alpha$ is equal to the $r$-length of $g$ (denoted $g_r(\alpha)$) plus the $r$-length of $b$ (denoted $b_r(\alpha)$). 
 \emph{Goodness} of $\alpha$ is defined as
$$ \mathfrak{g}(\alpha) = \frac{g_r(\alpha)}{l_r(\alpha)}.$$
\end{definition}


\begin{lemma}\label{L:Monotonicity of goodness}
Let $\delta >0$ and $\epsilon >0$ be given. Then there exists an integer $M = M(\delta, \epsilon)$ such that for any $\ffa$-separable conjugacy class $\alpha$ that crosses $H_r$ with $\mathfrak{g}(\alpha) \geq \delta$, we have $\mathfrak{g}(\phi^m(\alpha)) \geq 1-\epsilon$ for all $m \geq M$. \end{lemma}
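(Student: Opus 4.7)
The plan is to track how the good portion $g_r(\phi^m(\alpha))$ grows geometrically like $\lambda^m$ while the bad portion $b_r(\phi^m(\alpha))$ remains uniformly bounded, forcing the ratio $\mathfrak{g}(\phi^m(\alpha))$ to tend to $1$.

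First I will show $g_r(\phi(\alpha)) \geq \lambda\, g_r(\alpha)$. Decompose $\alpha$ into maximal $r$-legal subpaths $\beta_1, \ldots, \beta_k$ joined at the $r$-illegal turns. By the relative train track condition on the EG stratum $H_r$, each image $\phi(\beta_i)$ is $r$-legal, and bounded cancellation destroys at most $BCC(\phi)$ of length from each end of $\phi(\beta_i)$ during tightening of $[\phi(\alpha)]$. Hence $\phi(\beta_i)$ appears in $[\phi(\alpha)]$ as an $r$-legal segment of $r$-length at least $\lambda\, l_r(\beta_i) - 2 BCC(\phi)$, and its good contribution is at least $\lambda\, l_r(\beta_i) - 2BCC(\phi) - 2L_r^c$. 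The defining identity $L_r^c = 2BCC(\phi)/(\PFevalue - 1)$ then yields
\[
\lambda\, l_r(\beta_i) - 2 BCC(\phi) - 2L_r^c \;\geq\; \lambda\bigl(l_r(\beta_i) - 2L_r^c\bigr).
\]
Summing over the legal segments (and observing that any cancellation that destroys an illegal turn only augments $g_r$ by merging segments) gives $g_r(\phi(\alpha)) \geq \lambda\, g_r(\alpha)$, so iteratively $g_r(\phi^m(\alpha)) \geq \lambda^m g_r(\alpha) \geq \lambda^m \delta\, l_r(\alpha)$.

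Next I will bound the bad portion uniformly in $m$. Under iteration of a CT map the number of $r$-illegal turns cannot increase (legal turns stay legal, and cancellation only destroys illegal turns), so $i_r(\phi^m(\alpha)) \leq i_r(\alpha)$. Each illegal turn contributes at most $2L_r^c$ to the bad $r$-length, giving $b_r(\phi^m(\alpha)) \leq 2L_r^c\, i_r(\alpha)$. Since every $r$-illegal turn is incident to at least one $H_r$ edge of $r$-length $\geq 1$ (and each $H_r$ edge is adjacent to at most two turns), one has the crude bound $i_r(\alpha) \leq 2\, l_r(\alpha)$. Combining,
\[
\mathfrak{g}(\phi^m(\alpha)) \;\geq\; \frac{\lambda^m \delta\, l_r(\alpha)}{\lambda^m \delta\, l_r(\alpha) + 4 L_r^c\, l_r(\alpha)} \;=\; \frac{\lambda^m \delta}{\lambda^m \delta + 4 L_r^c},
\]
which exceeds $1 - \epsilon$ as soon as $\lambda^m \geq 4 L_r^c (1 - \epsilon)/(\delta \epsilon)$. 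Taking $M := \lceil \log_{\PFevalue}\bigl(4 L_r^c/(\delta \epsilon)\bigr)\rceil$ yields the desired uniform bound, depending only on $\delta, \epsilon$ (and on $\phi$).

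The main obstacle is justifying the per-step $\lambda$-growth of the good portion, which hinges on the precise calibration of $L_r^c$ and a careful treatment of short legal segments (those with $l_r(\beta_i) \leq 2L_r^c$): such segments contribute $0$ to $g_r$, but may be annihilated by cancellation or may grow into long segments under further iteration, and in neither case do they reduce $g_r$, so the $\lambda$-growth estimate is preserved. The $\ffa$-separability hypothesis plays no direct role in the estimates above, but it excludes degenerate cases such as $\alpha$ being a power of a closed indivisible Nielsen path of height $r$ — in which case $\mathfrak{g}(\alpha) = 0$ and the hypothesis $\mathfrak{g}(\alpha) \geq \delta > 0$ is vacuous.
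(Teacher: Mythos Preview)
Your argument is correct and is precisely the standard approach: exponential growth of the good $r$-length via the calibration $L_r^c = 2\,BCC(\phi)/(\PFevalue-1)$, together with the non-increase of $i_r$ to uniformly bound the bad portion. The paper does not reproduce this argument but simply refers the reader to \cite[Lemma~3.10]{U:HyperbolicIWIP}, whose proof is the one you have written out; your observation that the $\ffa$-separability hypothesis is not used in the estimate is also accurate.
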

The proof of the above lemma which is the same as in the absolute case can be found in \cite[Lemma 3.10]{U:HyperbolicIWIP}

\begin{definition}[Desired growth \cite{Brinkmann}]
Let $\sigma$ be a path in $G$ that crosses an EG stratum $H_r$. We say $\sigma$ has desired growth if there exist $N>0, \lambda>1, \epsilon>0$ and a collection of subpaths $S$ of $\sigma$ such that the following hold:
\begin{enumerate}[(a)]
\item For every integer $n>0$ and for every $\rho \in S$, we have $$ \lambda^nl_r(\rho) \leq \op{max}\{ l_r([\phi^{nN}(\rho)]_{\sigma}), l_r(\gamma)\},$$ where $\gamma$ is a subpath of $\sigma^{-nN}$ such that $[\phi^{nN}(\gamma)]_{\sigma^{-nN}} = \rho$.
\item There is no overlap between distinct paths in $S$.
\item The sum of the lengths of the paths in $S$ is at least $\epsilon l_r(\sigma)$. 
\end{enumerate}
\end{definition}

\begin{lemma}\label{L:Desired growth}
Let $\alpha \in [\free]$ be an $\ffa$-separable conjugacy class that crosses $H_r$. Then $\alpha$ has desired growth either under forward iteration or under backward iteration. \end{lemma}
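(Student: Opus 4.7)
The plan is to argue by a dichotomy on the goodness $\mathfrak{g}(\alpha)$, with a small threshold $\delta > 0$ to be chosen below.

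\textbf{Case 1: $\mathfrak{g}(\alpha) \geq \delta$.} I would show $\alpha$ has desired growth under forward iteration with $N = 1$ and $\lambda = \PFevalue$. Take $S$ to be the collection of maximal good subsegments of $\alpha$: by definition each is $r$-legal and sits at $r$-distance $> L_r^c$ from every $r$-illegal turn, and their total $r$-length is $g_r(\alpha) \geq \delta l_r(\alpha)$, so conditions (b) and (c) hold with $\epsilon = \delta$. For (a), each $\rho \in S$ is $r$-legal, so $\phi(\rho)$ remains $r$-legal and has $r$-length at least $\PFevalue l_r(\rho)$. The choice $L_r^c = 2BCC(\phi)/(\PFevalue-1)$ is exactly what is needed so that the bounded cancellation at the two neighboring $r$-illegal turns cannot eat into the interior of $\phi(\rho)$ after tightening; hence $[\phi(\rho)]_\alpha$ survives with $r$-length at least $\PFevalue l_r(\rho)$, and by induction $l_r([\phi^n(\rho)]_\alpha) \geq \PFevalue^n l_r(\rho)$ for every $n$.

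\textbf{Case 2: $\mathfrak{g}(\alpha) < \delta$.} Here $\alpha$ should have desired growth under backward iteration. Set $L_0 = 4L_r^c$ and decompose $\alpha$ into its maximal $r$-legal segments. A legal segment of length $\ell > 2L_r^c$ contributes its middle of length $\ell - 2L_r^c$ to the good portion; when $\ell > L_0$ this middle is at least $\ell/2$, so the total $r$-length of legal segments exceeding $L_0$ is at most $2 g_r(\alpha) < 2\delta l_r(\alpha)$. Grouping the remaining consecutive $L_0$-bounded legal segments into maximal chains, a simple counting argument---discarding the long legal segments and the very short chains (those with fewer than six legal segments), whose combined length is bounded by a constant multiple of $\delta l_r(\alpha)$ plus an additive constant---produces, for $\delta$ small enough, a pairwise disjoint collection $S = \{\rho_i\}$ of subpaths of $\alpha$ with $i_r(\rho_i) \geq 5$, $L_r(\rho_i) \leq L_0$, and $\sum_i l_r(\rho_i) \geq \epsilon l_r(\alpha)$ for some $\epsilon > 0$. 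For each $n$ set $\gamma_i := \rho_i^{-nN}$, viewed as a subpath of $\alpha^{-nN}$.

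To verify the backward bound in condition (a), apply Lemma~\ref{L:Backward growth} to obtain $M$ (depending on $L_0$) with $i_r(\rho_i^{-nM}) \geq (10/9)^n i_r(\rho_i)$ for all $n>0$, and Lemma~\ref{L:Brinkmann 6.5} to obtain a constant $C$ with $l_r(\rho_i) \leq C\, i_r(\rho_i)$. Since every edge of $G$ has $r$-length at least $1$, $l_r(\rho_i^{-nM}) \geq i_r(\rho_i^{-nM}) \geq (10/9)^n C^{-1} l_r(\rho_i)$. To promote this to a clean uniform bound $\lambda^n l_r(\rho_i) \leq l_r(\rho_i^{-nN})$ valid for every $n \geq 1$, pick $k$ with $(10/9)^k \geq 2C$, set $N = kM$, and take $\lambda = (10/9)^{k/2} > 1$; then for every $n \geq 1$, $l_r(\rho_i^{-nN}) \geq (10/9)^{nk} C^{-1} l_r(\rho_i) \geq \lambda^n l_r(\rho_i)$. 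The hypothesis that $\alpha$ is $\ffa$-separable is precisely what lets Lemma~\ref{L:Backward growth} apply, and it is inherited by each subpath $\rho_i \subseteq \alpha$ in the sense required by that lemma.

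\textbf{Main obstacle.} The technical heart of the argument is the partition step in Case 2: one must show that low goodness prevents most of $l_r(\alpha)$ from being concentrated in a few very long legal segments, so that $\alpha$ carries a large supply of well-spaced $r$-illegal turns organizable into bounded-$L_r$ chains with at least five illegal turns each. Once this geometric partition is in hand, the hypotheses of Lemmas~\ref{L:Backward growth} and~\ref{L:Brinkmann 6.5} are met for each $\rho_i$, and the selection of $N$ and $\lambda$ is only a routine calibration of constants.
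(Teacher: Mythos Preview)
Your dichotomy on goodness differs from the paper's, which splits instead on the ratio $l_r(\alpha)/i_r(\alpha)$ against a threshold $L_0 > L_r^c$ and, in the low-ratio case, further on whether $i_r(\alpha) \geq 5$. Your Case~1 is fine. The gap is in Case~2: the partition you sketch works only when $l_r(\alpha)$ is large compared to the additive constant you yourself flag. The length discarded in ``very short chains'' is bounded by roughly $5L_0$ times one plus the number of long legal segments, giving a total throwaway of the form $C\delta\, l_r(\alpha) + 5L_0$; taking $\delta$ small does nothing about the $5L_0$ term when $l_r(\alpha)$ itself is small. A circuit with two, three, or four $r$-illegal turns and every legal segment of $r$-length below $2L_r^c$ has $\mathfrak{g}(\alpha)=0$ but admits no subpath with $i_r \geq 5$, so Lemma~\ref{L:Backward growth} is unavailable and your construction of $S$ produces nothing.

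This is exactly the situation the paper isolates as its case 2(b): $l_r(\alpha)/i_r(\alpha) < L_0$ together with $i_r(\alpha) < 5$, so $l_r(\alpha)$ is uniformly bounded. The paper handles it by a separate finiteness argument: there are only finitely many possibilities for $\alpha \cap H_r$, and using the INP structure of the CT (Facts~\ref{F:Facts about CTs}) one shows that for an $\ffa$-separable $\alpha$ no $H_r$-segment can remain of bounded $r$-length under all iterates of $\phi$, whence a uniform power $\phi^M(\alpha)$ lands in the forward-growth regime. Without this third case your argument is incomplete; the goodness dichotomy alone does not see it.
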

\begin{proof}
Let $L_0 > L_r^c$ be a constant. There are several cases to consider. 
\begin{enumerate}
\item $\displaystyle{\frac{l_r(\alpha)}{i_r(\alpha)} \geq L_0}$. The proof of \cite[Proposition 7.1 (2)(b)(i)]{Brinkmann} shows that in this case $\alpha$ has desired growth in the forward direction. 
\item $\displaystyle{\frac{l_r(\alpha)}{i_r(\alpha)} < L_0}$.
\begin{enumerate}
\item $i_r(\alpha) \geq 5$. By \cite[Proposition 7.1 (2)(b)(ii)]{Brinkmann} and using Lemma~\ref{L:Backward growth}, ~\ref{L:Brinkmann 6.5} we get desired growth in the backward direction. 
\item $i_r(\alpha) <5$. 
We have that $\alpha$ is $\ffa$-separable and crosses $H_r$ non-trivially. Therefore, $\alpha$ is not fixed and does not have two consecutive occurrences of a closed INP. Since $l_r(\alpha)$ is bounded from above, there are only finitely many possibilities for $\alpha \cap H_r$. Suppose the $r$-length of no segment of $\alpha \cap H_r$ grows under $\phi$. Since there are only finitely many segments of $H_r$ of bounded length, after passing to a power we can assume that a segment $\alpha_i$ of $\alpha\cap H_r$ is fixed under $\phi$. Also the end points of $\alpha_i$ are in $H_r \cap G_{r-1}$. There has to be an illegal turn in $\alpha_i$ otherwise it would grow and in fact it has to be an INP because it persists. But at least one end-point of an INP in $G$ is not in $G_{r-1}$, thus we get a contradiction. Therefore we can pass to a uniform power $M$ such that $\phi^M(\alpha)$ satisfies (1) and hence has desired growth in forward direction. 

\end{enumerate}
\end{enumerate}
It can be seen in Brinkmann's proofs that the numbers $N, \lambda, \epsilon$ do not depend on a specific conjugacy class. 
\end{proof}

Let $\phi^{\prime} : G^{\prime} \to G^{\prime}$ be a completely split train track representative of $\oo^{-1}$. Let $l_{r^{\p}}$, $i_{r^{\p}}$, $L_{r^{\p}}^c$ and $C^{\p}$ be the corresponding notation related to $\phi^{\p}$. There exists a constant $B$ such that for any conjugacy class $\alpha$ we have $$\frac{l_{r^{\p}}(\alpha)}{B} \leq l_r(\alpha) \leq B l_{r^{\p}}(\alpha).$$ Let $\mathfrak{g}^{\prime}$ denote the goodness with respect to the train track structure of $\phi^{\prime}$. 
\begin{lemma}\label{L:GoodnessInBothDirections}
Given $\delta >0$, there exists $M>0$ such that for any $\ffa$-separable conjugacy class $\alpha$ that crosses $H_r$ either 
\begin{itemize}
\item $\mathfrak{g}(\phi^{nM}(\alpha)) \geq \delta$ for all $n \geq 1$ or 
\item $\mathfrak{g}^{\prime}((\phi^{\p})^{nM}(\alpha)) \geq \delta$ for all $n \geq1$. \end{itemize}\end{lemma}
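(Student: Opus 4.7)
The plan is to combine Lemma~\ref{L:Desired growth} (a positive fraction of $\alpha$ grows exponentially in one direction) with Lemma~\ref{L:Monotonicity of goodness} (any fixed positive goodness can be boosted to exceed $\delta$ by further iteration). First, apply Lemma~\ref{L:Desired growth} to obtain uniform constants $N$, $\lambda>1$, $\epsilon>0$ (independent of $\alpha$, as noted at the end of the proof of that lemma) and a collection $S$ of pairwise-disjoint subpaths of $\alpha$ with total $r$-length at least $\epsilon\, l_r(\alpha)$, such that the desired-growth inequality holds either with respect to $\phi$ or with respect to $\phi^{\p}$. Without loss of generality, treat the forward case for $\phi$.

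Next I would argue that, in Brinkmann's construction the subpaths $\rho \in S$ may be taken $r$-legal, and in the forward case the first term in the $\max$ realizes the growth bound, yielding $l_r([\phi^{nN}(\rho)]_{\phi^{nN}(\alpha)}) \geq \lambda^n l_r(\rho)$. Since $\phi$ preserves $r$-legality, these images are $r$-legal subpaths of $\phi^{nN}(\alpha)$, so the $r$-legal portion of $\phi^{n_0 N}(\alpha)$ contributes at least $\epsilon \lambda^{n_0} l_r(\alpha)$ to its $r$-length.

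To convert this into a lower bound on goodness, observe that $r$-illegal turns are non-increasing under $\phi$, so $i_r(\phi^{n_0 N}(\alpha)) \leq i_r(\alpha)$. Fix $L_0 > L_r^c$. If $L_r(\alpha) \leq L_0$, Lemma~\ref{L:Brinkmann 6.5} gives $i_r(\alpha) \leq C\, l_r(\alpha)$ for a uniform $C$; otherwise $\alpha$ already contains a long $r$-legal segment and case (1) in the proof of Lemma~\ref{L:Desired growth} applies with better constants, handled separately. Thus the bad portion of $\phi^{n_0 N}(\alpha)$ has $r$-length at most $2 L_r^c C\cdot l_r(\alpha)$, while its total $r$-length is at most $\lambda^{n_0 N} l_r(\alpha)$. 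Choosing $n_0$ large enough so that $\epsilon \lambda^{n_0}$ exceeds $2 L_r^c C$ by a definite factor, one obtains $\mathfrak{g}(\phi^{n_0 N}(\alpha)) \geq \delta_0$ for some universal $\delta_0 > 0$ independent of $\alpha$.

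Finally I would apply Lemma~\ref{L:Monotonicity of goodness} with parameters $(\delta_0, 1-\delta)$ to obtain a uniform $M_1$ such that $\mathfrak{g}(\phi^m(\beta)) \geq \delta$ for all $m \geq M_1$ whenever $\mathfrak{g}(\beta) \geq \delta_0$. Set $M := n_0 N + M_1$; since $nM \geq M \geq n_0 N + M_1$ for every $n \geq 1$, we obtain $\mathfrak{g}(\phi^{nM}(\alpha)) \geq \delta$. The backward case is symmetric, replacing $\phi$ by $\phi^{\p}$ and $\mathfrak{g}$ by $\mathfrak{g}^{\p}$, and we take $M$ to be the maximum of the two resulting constants. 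The main obstacle is the passage from abstract desired growth to an explicit uniform lower bound $\delta_0$ on goodness: one must verify that Brinkmann's subpaths can be taken $r$-legal so that growth persists cleanly under iteration, that the forward term in the $\max$ indeed dominates in the forward case, and that $\delta_0$ does not deteriorate as $\alpha$ varies over all $\ffa$-separable conjugacy classes crossing $H_r$.
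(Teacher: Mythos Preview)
Your forward case is close to the paper's case (a): there the set $S$ is taken to be the maximal $r$-legal subpaths of $r$-length at least $L_0+1$, one uses $l_r([\phi^{nN}(\rho)]_\alpha)\ge \PFevalue^{nN}l_r(\rho)/(L_0+1)$ together with the trivial upper bound $l_r(\phi^{nN}(\alpha))\le \PFevalue^{nN}l_r(\alpha)$ to get $\mathfrak{g}(\phi^{nN}(\alpha))\ge \epsilon/(L_0+1)$ for all $n$, and then one feeds this into Lemma~\ref{L:Monotonicity of goodness}. Your variant, bounding the bad portion via $i_r$ and choosing $n_0$ large, also works and is only cosmetically different.

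The genuine gap is your treatment of the backward case as ``symmetric''. It is not. Desired growth in the backward direction, as defined, is a statement about $l_r$ on $G$ under $\phi^{-1}$; it says nothing about $r'$-legality on $G'$ or about forward desired growth for $\phi'$. In particular there is no reason the subpaths in $S$ (which are subpaths of $\alpha$ in $G$) should be $r'$-legal, so your forward argument cannot simply be rerun with $\phi'$ in place of $\phi$. The paper handles this case by a different mechanism: from Lemma~\ref{L:Backward growth} and Lemma~\ref{L:Brinkmann 6.5} one gets
\[
B\,l_{r'}\bigl((\phi')^{nN}(\alpha)\bigr)\;\ge\; l_r(\phi^{-nN}(\alpha))\;\ge\; C^{-1}i_r(\phi^{-nN}(\alpha))\;\ge\; (10/9)^n\,\frac{1}{C^2B}\,l_{r'}(\alpha),
\]
while $i_{r'}((\phi')^{nN}(\alpha))\le i_{r'}(\alpha)\le C' l_{r'}(\alpha)$, so the bad portion for $\phi'$ is at most $2L_{r'}^c C' l_{r'}(\alpha)$ and hence
\[
\mathfrak{g}'\bigl((\phi')^{nN}(\alpha)\bigr)\;\ge\; 1-\frac{2L_{r'}^c C' B^2 C^2}{(10/9)^n}.
\]
This uses the bilipschitz comparison constant $B$ between $l_r$ and $l_{r'}$ and the exponential growth of the number of $r$-illegal turns under $\phi^{-1}$, neither of which appears in your sketch. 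You need to replace the ``symmetric'' sentence with this argument (or an equivalent one) before invoking Lemma~\ref{L:Monotonicity of goodness}.
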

\begin{proof}
Let $L_0 > L_r^c$ be the constant from Lemma~\ref{L:Desired growth}. By the same lemma, there exist $N>0, \lambda>1$ and $\epsilon>0$ such that any $\ffa$-separable conjugacy class that crosses $H_r$ has desired growth. 
There are two cases:
\begin{enumerate}[(a)]
\item Let's first consider the case when $\alpha$ has desired growth in the forward direction. This happens when $l_r(\alpha) \geq L_0 i_r(\alpha)$. For case 2(b) in the proof of Lemma~\ref{L:Desired growth} we pass to a uniform power of $\alpha$ which satisfies $l_r(\alpha) \geq L_0 i_r(\alpha)$. 
Let $S$ be the collection of maximal $r$-legal subpaths of $\alpha$ of $r$-length at least $L_0+1$. Then by the choice of $L_0$ we have for $\rho \in S$, $$l_r(\phi^{nN}(\rho)) \geq \PFevalue^{nN} \frac{1}{L_0+1}l_r(\rho).$$ 
We have that the paths in $S$ account for a definite fraction $\epsilon>0$ of $\alpha$. Now $$g_r(\phi^{nN}(\alpha)) \geq \sum_{\rho \in S} [l_r(\phi^{nN}(\rho))]_{\alpha} \geq \sum_{\rho \in S} \PFevalue^{nN} \frac{1}{L_0+1}[l_r(\rho)]_{\alpha} \geq \PFevalue^{nN} \frac{1}{L_0+1} \epsilon l_r(\alpha).$$
We also have $l_r(\phi^{nN}(\alpha)) \leq \PFevalue^{nN} l_r(\alpha)$. Thus we get $$\mathfrak{g}(\phi^{nN}(\alpha)) \geq \frac{\epsilon}{L_0+1}.$$ 


\item If $\alpha$ has desired growth in the backward direction, then by Lemma~\ref{L:Backward growth} and Lemma~\ref{L:Brinkmann 6.5}, we have $$Bl_{r^{\p}}((\phi^{\p})^{ nN}(\alpha)) \geq l_r(\phi^{-nN}(\alpha)) \geq C^{-1}i_r(\phi^{-nN}(\alpha)) \geq \left( \frac{10}{9} \right)^n \frac{1}{C^2B} l_{r^{\p}}(\alpha).$$
Now the number of $r^{\p}$-illegal turns in $(\phi^{\p})^{nN}(\alpha)$ is bounded above by those in $\alpha$. We have 
$$i_{r^{\p}}((\phi^{\p})^{nN}(\alpha)) \leq i_{r^{\p}}(\alpha) \leq C^{\p}l_{r^{\p}}(\alpha).$$
Also the bad portion of $(\phi^{\p})^{nN}(\alpha)$ is bounded from above by $2L_{r^{\p}}^c i_{r^{\p}}((\phi^{\p})^{nN}(\alpha))$. Thus $$ \mathfrak{g}^{\p}((\phi^{\p})^{nN}(\alpha)) \geq 1 - \frac{2L_{r^{\p}}^{c} C^{\p}{B^2C^2}}{(10/9)^n} \geq 1 - \frac{2L_{r^{\p}}^c C^{\p}{B^2C^2}}{(10/9)}.$$ 
\end{enumerate}
Now by Lemma~\ref{L:Monotonicity of goodness}, we find $M>0$ such that either one of the goodness is greater than $\delta$. 
\end{proof}
\section{North-south dynamics}
We are now ready to prove a north-south dynamic result. Recall $\oo$ is a fully irreducible outer automorphism relative to $\ffa$ and $\phi: G \to G$ is a completely split train track representative of $\oo$. We also have a stable current $[\StableCurrent]$ and an unstable current $[\UnstableCurrent]$ in $\mrc$.  
\begin{proposition}\label{L:U}
Given a neighborhood $U$ of $[\StableCurrent]$ in $\mrc$, there exists $0<\delta<1$ and $M(U)>0$ such that for any $[\eta_{\alpha}]\in \mrc$, with $\mathfrak{g}(\alpha)  > \delta$, we have that $\phi^n([\eta_{\alpha}]) \in U$ for all $n \geq M$. \end{proposition}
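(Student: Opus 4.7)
The strategy is to reduce projective convergence in $\prc$ to convergence of finitely many path-counts, use Lemma~\ref{L:Monotonicity of goodness} to pass to the regime in which $\phi^m(\alpha)$ is almost entirely $r$-legal, then invoke Proposition~\ref{P:Train track frequency} --- the same frequency result that underlies Lemma~\ref{L:Limit Current} --- to show that the path-frequencies in the $r$-legal portion match those of $\eta^a_\phi$ up to a negligible error.

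Concretely, after normalizing every representative $\eta$ of its projective class by $\eta(v_0)=1$ for a fixed reference path $v_0$ that crosses $H_r$ and satisfies $\eta^a_\phi(v_0)>0$, a basic neighborhood $U$ of $[\StableCurrent]$ is specified by finitely many test paths $v_1,\dots,v_N$ (each crossing $H_r$ and of bounded $r$-length) together with a tolerance $\epsilon>0$. Given a small auxiliary $\epsilon'>0$ (to be fixed at the end), Lemma~\ref{L:Monotonicity of goodness} provides $M_0=M_0(\delta,\epsilon')$ such that $\mathfrak{g}(\phi^m(\alpha))\geq 1-\epsilon'$ for all $m\geq M_0$; so for such $m$ the path $\phi^m(\alpha)$ is a cyclic concatenation of maximal $r$-legal segments separated by bad pieces of total $r$-length at most $\epsilon'\, l_r(\phi^m(\alpha))$. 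By Lemma~\ref{L:Brinkmann 6.5}, the number $i_r(\phi^m(\alpha))$ of $r$-illegal turns is at most $C_1\epsilon'\, l_r(\phi^m(\alpha))$, so the occurrences of each $v_i$ that straddle a bad piece contribute at most $C_2\epsilon'\, l_r(\phi^m(\alpha))$ to $(v_i,\phi^m(\alpha))$.

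For the dominant contribution, write $m=M_0+n$. Each legal segment $\sigma\subset\phi^{M_0}(\alpha)$ of sufficient length iterates to an $r$-legal segment $\phi^n(\sigma)$ of $r$-length $\lambda^n l_r(\sigma)$. Applying Proposition~\ref{P:Train track frequency} to each edge $e\subset\sigma\cap H_r$ gives
\[
(v_i,\phi^n(\sigma)) \;=\; \lambda^n \sum_{e\subset\sigma,\, e\subset H_r} d_{v_i,e} \;+\; o(\lambda^n),
\]
with a convergence rate uniform across the finite family of edges of $H_r$. Using $d_{v_i,e}=\kappa(e)\,d_{v_i,a}$ from the same proposition, and summing over all legal pieces of $\phi^{M_0}(\alpha)$, the total contribution to $(v_i,\phi^{M_0+n}(\alpha))$ has the shape $\lambda^n d_{v_i,a}K + o(\lambda^n)$, where $K=K(\alpha,M_0)$ is the total $\kappa$-weight of $H_r$-edges in the good portion of $\phi^{M_0}(\alpha)$ and is independent of $i$. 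The same formula holds for $v_0$, and in the ratio $(v_i,\phi^{M_0+n}(\alpha))/(v_0,\phi^{M_0+n}(\alpha))$ the common factor $\lambda^n K$ cancels, producing $d_{v_i,a}/d_{v_0,a}=\eta^a_\phi(v_i)/\eta^a_\phi(v_0)$ up to an error controlled by $\epsilon'$ and by the $o(\lambda^n)$ term. Choosing $\epsilon'$ small compared to $\epsilon$ and then $n$ large enough so that the $o(\lambda^n)$ error is below $\epsilon/2$ puts $\phi^{M_0+n}([\eta_\alpha])$ inside $U$ with $M=M_0+n$; the value of $\delta$ is then whatever Lemma~\ref{L:Monotonicity of goodness} requires for this $\epsilon'$.

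\textbf{Main obstacle.} The technical crux is Step~3: extracting a \emph{uniform} rate from Proposition~\ref{P:Train track frequency} across the finite set of edges of $H_r$ and test paths $v_i$, and simultaneously controlling the bounded-cancellation errors at the interfaces between consecutive legal pieces so that they vanish cleanly in the ratio rather than merely being $o(\lambda^n)$ individually. Completely split legal pieces may also contain exceptional paths and connecting subpaths in $G_{r-1}$ contributing to the test-word counts; these are precisely what is built into the substitution bookkeeping of Proposition~\ref{P:Train track frequency}, which is why that proposition is stated relative to the full completely split structure rather than to the top stratum alone.
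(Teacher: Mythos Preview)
Your outline is essentially the standard argument that the paper is pointing to when it cites \cite[Lemma 3.11]{U:HyperbolicIWIP}: push goodness close to $1$ via Lemma~\ref{L:Monotonicity of goodness}, then use the frequency computation of Proposition~\ref{P:Train track frequency} on the edges of $H_r$ appearing in the legal portion to match the finitely many test-path ratios defining $U$. The overall structure and the identification of uniformity as the crux are both on target.

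One correction: your appeal to Lemma~\ref{L:Brinkmann 6.5} to bound $i_r(\phi^m(\alpha))$ by $C_1\epsilon'\,l_r(\phi^m(\alpha))$ is misplaced, since that lemma has the hypothesis $L_r(\rho)\le L$ (bounded longest legal segment), which is exactly what fails once goodness is near $1$. The bound you want comes instead directly from the definition of goodness: the bad portion satisfies $b_r(\phi^m(\alpha))=\sum_j\min\bigl(l_r(\sigma_j),2L_r^c\bigr)\le\epsilon'\,l_r(\phi^m(\alpha))$, where the $\sigma_j$ are the maximal $r$-legal segments, and since each $\sigma_j$ has $r$-length at least the minimum $H_r$-edge length, the number $I$ of such segments (hence of $r$-illegal turns) is bounded by a constant times $\epsilon'\,l_r(\phi^m(\alpha))$. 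Alternatively, simply use that $i_r$ is non-increasing under $\phi$ while $l_r$ grows like $\lambda^n$ on the good portion, so the boundary error $O(I)$ is $o(\lambda^n l_r(\phi^{M_0}(\alpha)))$ regardless. Either route closes the gap and the rest of your argument goes through.
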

The proof of the above lemma is similar to the proof of \cite[Lemma 3.11]{U:HyperbolicIWIP}. 

\begin{lemma}\label{L:UorV}  
Given neighborhoods $U$ and $V$ of $[\StableCurrent]$ and $[\UnstableCurrent]$ in $\mrc$, respectively, there exists $M_1>0$ such that for any $\ffa$-separable conjugacy class $\alpha$ that crosses $H_r$ either $\phi^m([\eta_{\alpha}]) \in U$ or $(\phi^{\p})^{m}([\eta_{\alpha}]) \in V$ for all $m \geq M_1$. 
\end{lemma}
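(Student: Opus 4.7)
The plan is to combine Proposition~\ref{L:U}, applied to both $\oo$ and $\oo^{-1}$, with the dichotomy supplied by Lemma~\ref{L:GoodnessInBothDirections}. Given $U$, Proposition~\ref{L:U} (for $\oo$ with completely split train track representative $\phi : G \to G$) yields thresholds $\delta_U < 1$ and $M_U > 0$ such that any $[\eta_{\beta}] \in \mrc$ with $\mathfrak{g}(\beta) > \delta_U$ is carried into $U$ by $\phi^n$ for every $n \geq M_U$. Since $\oo^{-1}$ is also fully irreducible relative to $\ffa$, with stable current $[\UnstableCurrent]$ and completely split train track representative $\phi' : G' \to G'$, the same proposition applied to $\oo^{-1}$ and $V$ yields analogous thresholds $\delta_V < 1$ and $M_V > 0$ for the goodness $\mathfrak{g}'$ measured in $G'$.

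Next I would set $\delta = \max(\delta_U, \delta_V) + \epsilon$ for a small $\epsilon > 0$ chosen so that $\delta < 1$ (the $\epsilon$ is only to convert the strict inequality required by Proposition~\ref{L:U} into the non-strict one supplied by Lemma~\ref{L:GoodnessInBothDirections}) and invoke Lemma~\ref{L:GoodnessInBothDirections} with this $\delta$ to obtain a uniform exponent $M$. For an arbitrary $\ffa$-separable conjugacy class $\alpha$ crossing $H_r$, the dichotomy forces one of the following: either $\mathfrak{g}(\phi^{nM}(\alpha)) \geq \delta$ for all $n \geq 1$, or $\mathfrak{g}'((\phi')^{nM}(\alpha)) \geq \delta$ for all $n \geq 1$.

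In the first case I would set $\beta = \phi^{M}(\alpha)$; since $\oo \in \relOut$ preserves $\ffa$, the class $\beta$ is still $\ffa$-separable, and since $\oo$ is fully irreducible relative to $\ffa$ with $H_r$ the top EG stratum, $\beta$ still crosses $H_r$, so $[\eta_{\beta}] \in \mrc$. Applying Proposition~\ref{L:U} to $\beta$ gives $\phi^k([\eta_{\beta}]) \in U$ for all $k \geq M_U$, i.e.\ $\phi^m([\eta_{\alpha}]) \in U$ for every $m \geq M + M_U$. The second case is symmetric and yields $(\phi')^m([\eta_{\alpha}]) \in V$ for every $m \geq M + M_V$. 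Taking $M_1 = M + \max(M_U, M_V)$ finishes the proof.

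The heavy lifting has already been done in Lemmas~\ref{L:Backward growth}--\ref{L:GoodnessInBothDirections} and in Proposition~\ref{L:U}, so the main obstacle here is really bookkeeping: one must match the thresholds produced by the two applications of Proposition~\ref{L:U} with the single threshold fed into Lemma~\ref{L:GoodnessInBothDirections}, and verify that $\ffa$-separability and the property of crossing $H_r$ are both preserved under the relevant iterates (which follows from $\relOut$-invariance of $\ffa$ and from relative full irreducibility, respectively). Once this is in place, the north--south-type statement reduces to a clean two-sided version of the forward convergence in Proposition~\ref{L:U}.
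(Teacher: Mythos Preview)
Your proposal is correct and follows essentially the same approach as the paper: the paper's proof is the single sentence ``The proof follows from Lemma~\ref{L:GoodnessInBothDirections} and Lemma~\ref{L:U},'' and you have simply spelled out the bookkeeping (matching the thresholds, handling the strict versus non-strict inequality, and checking that $\ffa$-separability and crossing $H_r$ persist under iterates).
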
 
The proof follows from Lemma~\ref{L:GoodnessInBothDirections} and Lemma~\ref{L:U}. 

\begin{proposition}[{\cite[Proposition 3.4]{LU:HyperbolicAutomorphism}}]\label{L:NS1}
Let $\phi: X \to X$ be a homeomorphism of a compact space $X$ and assume that $X$ is sufficiently separable, for example metrizable. Let $Y \subset X$ be a dense set, and let $\mathcal{P},\mathcal{Q}$ be two distinct $\phi$-invariant points in $X$. Assume the following holds: for every neighborhood $U$ of $\mathcal{P}$ and $V$ of $\mathcal{Q}$, there exists an integer $M_2 \geq 1$ such that for all $m \geq M_2$ and all $y \in Y$ one has either $\phi^m(y) \in U$ or $\phi^{-m}(y) \in V$. Then $\phi^2$ has uniform north-south dynamics from $\mathcal{P}$ to $\mathcal{Q}$.  \end{proposition}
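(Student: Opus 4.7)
The plan is to leverage density of $Y$ to upgrade the hypothesis into a covering of $X$ by open sets of the form $\phi^{-m}(U) \cup \phi^m(V)$, and then to apply $\phi^{-m}$ to this covering to rewrite it in a form that directly yields uniform convergence on compact sets away from $\mathcal{Q}$.

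First I would fix an open neighborhood $U$ of $\mathcal{P}$ and a compact set $K \subset X$ with $\mathcal{Q} \notin K$. Using metrizability (hence normality) of $X$, I would pick an open neighborhood $V$ of $\mathcal{Q}$ with $\overline{V} \cap K = \emptyset$, together with strictly nested open sets $U_0 \subset \overline{U_0} \subset U$ and $V_0 \subset \overline{V_0} \subset V$ containing $\mathcal{P}$ and $\mathcal{Q}$ respectively. Applying the hypothesis to the pair $(U_0, V_0)$ yields an integer $M_2$ such that $Y \subset \phi^{-m}(U_0) \cup \phi^m(V_0)$ for every $m \geq M_2$. Since this set is open and contains the dense set $Y$, its closure equals $X$, and since $\phi$ is a homeomorphism we have $\overline{\phi^{-m}(U_0)} = \phi^{-m}(\overline{U_0}) \subset \phi^{-m}(U)$, with the analogous inclusion on the $V_0$ side. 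Therefore $X = \phi^{-m}(U) \cup \phi^m(V)$ for every $m \geq M_2$.

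The key move is then to apply $\phi^{-m}$ to both sides of this equality. Since $\phi^{-m}$ is a bijection of $X$ we obtain $X = \phi^{-2m}(U) \cup V$, which rearranges to $X \setminus V \subset \phi^{-2m}(U)$. Because $\overline{V} \cap K = \emptyset$ forces $K \subset X \setminus V$, this gives $\phi^{2m}(K) \subset U$ for every $m \geq M_2$, which is precisely the forward half of uniform north-south dynamics for $\phi^2$. The backward half --- that $\phi^{-2m}(K') \subset V$ for any compact $K'$ not containing $\mathcal{P}$ --- follows by the symmetric manoeuvre of applying $\phi^{m}$ (rather than $\phi^{-m}$) to the covering, which yields $X = U \cup \phi^{2m}(V)$ and hence $X \setminus U \subset \phi^{2m}(V)$.

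I expect the main conceptual step to be the $\phi^{-m}$-trick in the last paragraph: the covering $X = \phi^{-m}(U) \cup \phi^m(V)$ by itself is not strong enough, since both pieces may contain $K$ simultaneously, but after applying $\phi^{-m}$ the $V$-side becomes a fixed open set independent of $m$, so the $m$-independent disjointness $\overline{V} \cap K = \emptyset$ already forces $K$ into the preimage $\phi^{-2m}(U)$. The promotion from dense $Y$ to all of $X$ in the second paragraph is a routine closure argument, and the passage from $\phi$ to $\phi^2$ is automatic since all of the conclusions naturally come with even iterates of $\phi$.
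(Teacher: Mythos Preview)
Your proof is correct and follows essentially the same strategy as the paper's (commented-out) argument: use density of $Y$ together with a closure/normality trick to upgrade the hypothesis to a statement about all of $X$, then apply $\phi^{\pm m}$ to convert this into the even-iterate conclusion. Your packaging via the global covering $X = \phi^{-m}(U) \cup \phi^m(V)$ and then hitting both sides with $\phi^{-m}$ is a clean reformulation of what the paper does pointwise (showing that $y \in Y \cap \phi^m(\overline{W})$ forces $\phi^m(y) \in U$, hence $\phi^{2m}(\overline{W}) \subset U$ after closing up), but the underlying logic is identical.
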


\begin{proposition}[{\cite[Proposition 3.5]{LU:HyperbolicAutomorphism}}]\label{NS2}
Let $\phi: X \to X$ be as in Proposition \ref{L:NS1} with distinct fixed points $\mathcal{P}$ and $\mathcal{Q}$ and assume that some power $\phi^s$ with $ s\geq 1$ has uniform north-south dynamics from $\mathcal{P}$ to $\mathcal{Q}$. Then $\phi$ also has uniform north-south dynamics from $\mathcal{P}$ to $\mathcal{Q}$. 
\end{proposition}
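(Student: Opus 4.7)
The plan is to bootstrap the uniform north--south dynamics of the power $\phi^{s}$ to $\phi$ itself by absorbing the ``remainder'' iterates $\phi^{0},\phi^{1},\dots,\phi^{s-1}$ into a suitable shrinking of the target neighborhood. Since $\mathcal{P}$ is a fixed point of $\phi$ and $\phi$ is continuous, the preimage of any neighborhood of $\mathcal{P}$ under $\phi^{j}$ is again a neighborhood of $\mathcal{P}$; this is the only continuity input we need.

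First I would verify the forward half of the definition. Fix a neighborhood $U$ of $\mathcal{P}$ and a compact set $K \subset X \setminus \{\mathcal{Q}\}$. Define
\[
U' \;=\; \bigcap_{j=0}^{s-1} \phi^{-j}(U),
\]
which is an open neighborhood of $\mathcal{P}$ because each $\phi^{-j}(U)$ is open and contains $\mathcal{P}$. By the uniform north--south dynamics of $\phi^{s}$ applied to $K$ and $U'$, there exists $N \geq 1$ such that $(\phi^{s})^{n}(K) \subset U'$ for every $n \geq N$. Now any integer $m \geq Ns$ can be written uniquely as $m = ns + j$ with $n \geq N$ and $0 \leq j < s$, and therefore
\[
\phi^{m}(K) \;=\; \phi^{j}\bigl((\phi^{s})^{n}(K)\bigr) \;\subset\; \phi^{j}(U') \;\subset\; U.
\]
Setting $M_{2} = Ns$ yields the required uniform convergence of $K$ into $U$ under forward iterates of $\phi$.

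The backward half is handled symmetrically. Given a neighborhood $V$ of $\mathcal{Q}$ and a compact set $K \subset X \setminus \{\mathcal{P}\}$, replace $U'$ by $V' = \bigcap_{j=0}^{s-1} \phi^{j}(V)$, a neighborhood of $\mathcal{Q} = \phi(\mathcal{Q})$; then use the uniform backward convergence $(\phi^{s})^{-n}(K) \subset V'$ and the decomposition $\phi^{-m} = \phi^{-j} \circ (\phi^{s})^{-n}$ for $m = ns + j$. Combining both halves and taking the maximum of the two thresholds gives a single constant that witnesses uniform north--south dynamics for $\phi$.

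There is no real obstacle here; the only thing one has to be slightly careful about is that the ``remainder'' maps $\phi^{j}$ for $0 \leq j < s$ need not send neighborhoods of $\mathcal{P}$ into themselves, which is why one must preshrink $U$ to $U'$ using continuity at the fixed point before invoking the hypothesis on $\phi^{s}$. Metrizability of $X$ (or any sufficient separability) is used only implicitly to guarantee that neighborhood bases behave well; the argument itself is purely topological.
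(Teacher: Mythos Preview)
Your argument is correct. The paper itself does not prove this proposition; it is quoted from \cite{LU:HyperbolicAutomorphism} and stated without proof, so there is nothing to compare against beyond noting that your bootstrap via the preshrunk neighborhood $U' = \bigcap_{j=0}^{s-1}\phi^{-j}(U)$ is the standard way to pass from a power to the map itself.

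One small remark: in the backward half you write $V' = \bigcap_{j=0}^{s-1}\phi^{j}(V)$ and rely on $\phi^{j}(V)$ being open; this is fine because $\phi$ is assumed to be a homeomorphism, but it would be slightly cleaner (and parallel to the forward half) to write $V' = \bigcap_{j=0}^{s-1}(\phi^{-1})^{-j}(V)$, i.e.\ to run exactly the same argument with $\phi^{-1}$ in place of $\phi$ and the roles of $\mathcal{P},\mathcal{Q}$ swapped. Also, metrizability plays no role in your proof and you need not mention it; that hypothesis is used in Proposition~\ref{L:NS1}, not here.
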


\begin{thmA}
Let $\ffa$ be a non-trivial free factor system of $\free$ such that $\zeta(\ffa) \geq 3$. Let $\oo \in \relOut$ be fully irreducible relative to $\ffa$. Then $\oo$ acts with uniform north-south dynamics on $\mrc$.  
\end{thmA}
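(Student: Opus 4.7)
The plan is to verify the hypotheses of Proposition~\ref{L:NS1} with $X = \mrc$, $\mathcal{P} = [\StableCurrent]$, $\mathcal{Q} = [\UnstableCurrent]$, and $Y$ the set of projective rational relative currents $[\eta_\alpha]$ for $\ffa$-separable $\alpha$; this gives uniform north-south dynamics for $\oo^2$, and then Proposition~\ref{NS2} upgrades the conclusion to $\oo$ itself. Most of the heavy lifting has already been done in the preceding sections, so the proof is an assembly.

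First I would check the topological hypotheses on $X$. Compactness of $\mrc$ is immediate from Lemma~\ref{P:Compact} since $\mrc$ is by definition a closed subspace of $\prc$. Metrizability follows because the topology on $\rc$ is generated by the countable subbasis indexed by the cylinder sets $C(w)$ for $w \in \free \setminus \ffa$, so $\rc$ is second countable and Hausdorff, and the quotient $\prc$ inherits these properties. By definition of $\mrc$ as the closure of the rational currents $[\eta_\alpha]$ with $\alpha$ $\ffa$-separable, the set $Y$ is dense in $\mrc$.

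Next I would verify that $[\StableCurrent]$ and $[\UnstableCurrent]$ are distinct $\oo$-fixed points. Invariance is built into the construction in Lemma~\ref{L:Limit Current}: the frequency identity $(v,\phi^{n+1}(a)) = (v,\phi^n(\phi(a)))$ together with the definition of $\eta^a_{\phi}$ as a $\PFevalue$-normalized limit gives $\phi \cdot \eta^a_{\phi} = \PFevalue\, \eta^a_{\phi}$, so the projective class is fixed by $\oo$; the argument for $[\UnstableCurrent]$ is symmetric. For distinctness, if $[\StableCurrent] = [\UnstableCurrent]$, then a single current would be scaled by the PF eigenvalue $\PFevalue_{\oo} > 1$ under $\oo$ and also by $\PFevalue_{\oo^{-1}} > 1$ under $\oo^{-1}$, forcing $\PFevalue_{\oo^{-1}} = \PFevalue_{\oo}^{-1} < 1$, a contradiction.

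Finally I would check the alternative condition of Proposition~\ref{L:NS1}. Because $\ffa = \mathcal{F}(G_{r-1})$ and $H_r$ is the top stratum, every conjugacy class $\alpha \in [\free \setminus \ffa]$ necessarily crosses $H_r$. Thus every $y = [\eta_\alpha] \in Y$ satisfies the hypotheses of Lemma~\ref{L:UorV}, which produces, for given neighborhoods $U$ of $[\StableCurrent]$ and $V$ of $[\UnstableCurrent]$, a uniform $M_1 > 0$ such that either $\oo^m(y) \in U$ or $\oo^{-m}(y) \in V$ for all $m \geq M_1$. This is exactly the input required by Proposition~\ref{L:NS1}, yielding uniform north-south dynamics for $\oo^2$ on $\mrc$; Proposition~\ref{NS2} then promotes this to $\oo$. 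The main substantive obstacle has already been overcome in Lemma~\ref{L:UorV} (the goodness dichotomy based on Brinkmann's desired-growth argument) and in Lemma~\ref{L:Limit Current} (the substitution-dynamics construction of the limiting currents); at the level of the main theorem there is no additional difficulty beyond verifying these pieces fit together.
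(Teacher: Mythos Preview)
Your proposal is correct and follows essentially the same route as the paper: assemble Lemma~\ref{L:UorV}, Proposition~\ref{L:NS1}, and Proposition~\ref{NS2}. You have simply made explicit the verification of the hypotheses of Proposition~\ref{L:NS1} (compactness and metrizability of $\mrc$, density of the $\ffa$-separable rational currents, and the fixed-point and distinctness properties of $[\StableCurrent]$ and $[\UnstableCurrent]$) that the paper leaves implicit.
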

\begin{proof}
The proof follows from Lemma~\ref{L:UorV}, Proposition~\ref{L:NS1} and Proposition~\ref{NS2}.
\end{proof}
\section{Appendix}
\subsection{Extension of relative currents}\label{sec:ExtensionRC} In this section we will prove Lemma~\ref{L:k-extension},
which says that given a relative current $\eta_0$ there exists a signed measured current $\eta$ which is a $k$-extension of $\eta_0$. We will first show that $\eta_0$ can be extended to a signed measured current $\eta$ which may or may not be non-negative on all words of length less than equal to $k$. We then show how to modify $\eta$ to get a $k$-extension of $\eta_0$. 
\begin{remark}
Throughout this section we will assume that $\ffa$ has only one conjugacy class of a free factor $[A_0]$. When $\ffa$ has more than one free factor in it then the same process can be repeated for all the free factors independently of each other.  
\end{remark}
\emph{Notation:} 
\begin{itemize}
\item Let $\RelativeBasis$ be a relative basis of $\free$. Let $s$ be the rank of the free factor $A_0$. Denote the generators of $A_0$ by $a_i$, $1 \leq i \leq s$. Also let $A:=\{a_1^{\pm}, \ldots, a_s^{\pm} \}$. 
 \item Let $S_{k}$ be the set of words in $A_0$ of length $k$ with respect to $\RelativeBasis$. Let $\#S_k$ denote the cardinality of $S_{k}$.
\item Let $S_k^0$ be a subset of $S_k$ (chosen once and for all) such that for every $w \in S_k$ exactly one of $w$ or $\ovrl{w}$ appears in $S_k^0$.  
\item The letters $e, x, y, z$ will denote elements of $\RelativeBasis$.   
\item Whenever a forward (backward) extension of a word $w$ by $e \in \RelativeBasis$ is written as $we$ $(ew)$ it is to be understood that $e$ is not the inverse of the last (first) letter of $w$. 
\end{itemize}

For every $k>0$ we will define a signed measured current $\eta$ on words in $A_0$ of length $(k-1)$ and use those values together with the additivity laws satisfied by $\eta$ to define $\eta$ on words of length $k$. To start with words of length one, choose arbitrary values for $\eta(a_i)$ for all $1\leq i\leq s$. By induction assume $\eta(v)$ is defined for all words $v$ of length less than equal to $(k-1)$. 
By additivity, for all $v \in S_{k-1}^0$ the following hold: 
\begin{flalign*}
\eta(v) & = \sum_{e \in A}\eta(ve) + \sum_{e \notin A}\eta_0(ve), \\
\eta(\ovrl{v}) & = \sum_{e \in A}\eta(\ovrl{v}e) + \sum_{e \notin A}\eta_0(\ovrl{v}e). 
\end{flalign*}
Since $\eta$ is invariant under taking inverses, the equation obtained from forward extension of $\ovrl{v}$ is the same as the equation obtained from backward extension of $v$.  

Rearranging the equations to have the unknown terms on left hand side we get
\begin{flalign*}
\sum_{e \in A}\eta(ve) & = \eta(v) - \sum_{e \notin A}\eta_0(ve)  = c_v,\\
\sum_{e \in A}\eta(\ovrl{v}e) & = \eta(\ovrl{v}) - \sum_{e \notin A}\eta_0(\ovrl{v}e)  = c_{\ovrl{v}}.
\end{flalign*}
Thus there are $\#S_{k-1}$ equations in $\#S_{k}^0$ variables and the number of variables are more than the number of equations. Denote this system of equations by $E^1_{k-1}$, that is, equations obtained from one edge extensions of length $(k-1)$ words. Similarly we can look at the system $E^i_{k-i}$.  

Consider the augmented matrix $[M|c]$ for the system of equations $E^1_{k-1}$ with rows labeled by $v \in S_{k-1}$ and columns by $w \in S_k^0$.  If $w = ve$ or $\ovrl{w} = ve$ for some $e \in A$, then $M_{v,w}= 1$, otherwise, $M_{v,w}=0$.. We will denote a row vector of $M$ by $r_v$ corresponding to $v \in S_{k-1}$. We make some observations about the matrix $M$. 
\begin{itemize}
\item Each column has exactly two ones. Indeed, $M_{v,w}$ is 1 exactly when $v$ is a prefix of $w$ or $\ovrl{w}$. 
\item  There are $(2s-1)$ non-zero entries in each row because there are $(2s-1)$ possible extensions of $v$ by $e \in A$. 
\item Any two distinct rows can be same in at most one column. Let $w$ be common to two distinct rows $r_{v_1}$ and $r_{v_2}$. Then 
$$ w = v_1e_1 \text{ or } \ovrl{e_1}\hspace{.1cm}\ovrl{v_1} \text{ \quad and \quad} w = v_2e_2 \text{ or } \ovrl{e_2}\hspace{.1cm}\ovrl{v_2}$$ 
for some $e_1, e_2 \in A$. Then it must be true that $v_1$ begins with $\ovrl{e_2}$ and $v_2$ begins with $\ovrl{e_1}$. Thus $w$ is uniquely determined.  
\end{itemize}

\begin{lemma}\label{consistent} \begin{enumerate}[(a)]
\item For every $i \geq 1$, an equation in the system $E^{i+1}_{k-i-1}$ is a linear combination of equations in the system $E^{i}_{k-i}$. Thus it is sufficient to look at the system $E^1_{k-1}$ to obtain all constraints satisfied by $\eta(w)$ for all $w \in S_{k}^0$. 

\item Let $u \in S_{k-2}$. Then we have $$\sum_{x \in A}r_{xu} = \sum_{x \in A} r_{x \ovrl{u}}.$$

\item The set of relations $\displaystyle{\sum_{x \in A}r_{xu} = \sum_{x \in A} r_{x \ovrl{u}}}$ for every $u\in S_{k-2}$ generate any other relation among the rows of $M$. 

\item We also have $$\displaystyle{\sum_{x \in A}c_{xu} = \sum_{x \in A} c_{x \ovrl{u}}}$$ where $c_v$ is the constant term of the equation determined by $v \in S_{k-1}$.

\item The system of equations $E^1_{k-1}$ is consistent and hence has a solution. Thus we can define $\eta$ on words of length $k$. \end{enumerate}\end{lemma}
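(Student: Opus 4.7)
My plan is to treat $E^1_{k-1}$ as a linear system $Mx = c$ and invoke Rouch\'e--Capelli: parts (a)--(c) identify the kernel of the row-relations of $M$, part (d) verifies that $c$ lies in the corresponding row-space, and (e) then follows immediately.

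For (a) I would induct on $i$. Fix $v \in S_{k-i-1}$ and sum the $E^i_{k-i}$ equations indexed by $ve_1$ over valid $e_1 \in A$. Using the (already-established) additivity of $\eta$ at length $k-i-1$,
\[
\sum_{e_1 \in A,\ \text{valid}} \eta(ve_1) = \eta(v) - \sum_{e_1 \notin A} \eta_0(ve_1),
\]
to replace the left-hand side, one recovers exactly the $E^{i+1}_{k-i-1}$ equation for $v$. Part (b) is a direct column-by-column check: for $w = y_1\cdots y_k \in S_k^0$, column $w$ of $M$ has ones precisely at the rows indexed by $p(w) = y_1\cdots y_{k-1}$ and $q(w) = \overline{y}_k\cdots \overline{y}_2$, so $\bigl(\sum_{x\in A} r_{xu}\bigr)_w = \mathbf{1}[y_2\cdots y_{k-1} = u] + \mathbf{1}[\overline{y}_{k-1}\cdots \overline{y}_2 = u]$, and swapping $u \leftrightarrow \overline u$ merely swaps the two indicators.

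The main obstacle will be (c). I plan to view $M$ as the unoriented edge--vertex incidence matrix of a multigraph $\Gamma$ on vertex set $S_{k-1}$, in which each column $w \in S_k^0$ is an edge joining $p(w) = y_1 u$ to $q(w) = \overline{y}_k \overline{u}$ with middle $u = y_2\cdots y_{k-1} \in S_{k-2}$. A row-relation $\sum_v \alpha_v r_v = 0$ is then a function $\alpha\colon S_{k-1}\to \mathbb{R}$ satisfying $\alpha_{p(w)} + \alpha_{q(w)} = 0$ on every edge. Fixing $u$ and varying $y_1$ and $y_k$ independently over $A$ forces $\alpha_{xu}$ to take the same value $\beta_u$ for all valid $x \in A$, and the relation then forces $\beta_u + \beta_{\overline u} = 0$. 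Writing $\alpha$ in these parameters,
\[
\alpha = \sum_{\{u,\overline u\}} \beta_u \,\sigma_u, \qquad \sigma_u := \sum_{x\in A}(e_{xu} - e_{x\overline u}),
\]
shows that the row-relation space is spanned by the vectors $\sigma_u$, which are precisely the relations in (b) (palindromic $u$ contributing trivially).

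For (d), I would expand $c_{xu} = \eta(xu) - \sum_{e\notin A}\eta_0(xue)$, apply flip-invariance $\eta(xu) = \eta(\overline{u}\, \overline{x})$ and $\eta_0(xue) = \eta_0(\overline{e}\,\overline{u}\,\overline{x})$, and invoke the inductive additivity of $\eta$ at length $k-2$ together with additivity of $\eta_0$ on length-$(k-1)$ words: the one-sided boundary contributions telescope, and the difference $\sum_{x\in A}c_{xu} - \sum_{x\in A}c_{x\overline{u}}$ collapses to $\sum_{y,z\notin A}\bigl[\eta_0(yuz) - \eta_0(y\overline{u}z)\bigr]$, which vanishes under the substitution $(y,z) \mapsto (\overline{z},\overline{y})$ by flip-invariance of $\eta_0$ on $\mathbf{Y}$. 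Finally, (e) is Rouch\'e--Capelli: by (c) every row-relation of $M$ lies in the span of the relations (b), and by (d) the constant vector $c$ satisfies these same relations, so $\mathrm{rank}(M) = \mathrm{rank}([M\mid c])$ and the system $E^1_{k-1}$ is consistent, defining $\eta$ on $S_k$.
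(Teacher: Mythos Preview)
Your proposal is correct and follows essentially the same approach as the paper. Part (a) is identical; for (b) and (c) the paper argues by fixing a coefficient in a ``minimal relation'' and propagating via the two nonzero column entries of $M$, which is exactly your incidence-graph argument ($\alpha_{p(w)}+\alpha_{q(w)}=0$) in different language; for (d) the paper performs the same expansion and then observes the resulting sum is symmetric under $u\leftrightarrow\overline u$ by a cylinder-set picture, which is your flip-and-reindex $(y,z)\mapsto(\overline z,\overline y)$ written algebraically (your claimed intermediate form $\sum_{y,z\notin A}[\eta_0(yuz)-\eta_0(y\overline uz)]$ is slightly off---in fact all three ``types'' of cross terms cancel separately under that substitution, not just the $(\notin A,\notin A)$ block---but the mechanism and conclusion are the same).
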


\begin{proof}\begin{enumerate}[(a)]
\item Let $u \in S_{k-i-1}$. Then $$\eta(u) = \sum_{x \in A} \eta(ux) + \sum_{x \notin A} \eta(ux).$$ By equations in $E^i_{k-i}$, we have $$ \eta(ux) = \sum_{y \in \free, |y|=i} \eta(uxy).$$
Adding all these equations over $x \in \RelativeBasis$ we get $$ \eta(u) = \sum_{x,y \in \free, |x| = 1, |y|=i} \eta(uxy)= \sum_{z \in \free, |z|=i+1} \eta(uz)$$ 
Thus we recovered an equation in $E^{i+1}_{k-i-1}$ by a combination of equations in $E^i_{k-i}$.

\item For every $x \in A$, $M_{xu,w}\neq 0$ exactly when $w=x\,u\, \overline{y}$ or $w=y\, \overline{u}\, \overline{x}$ for some $y \in A$. Therefore if $M_{xu,w}\neq 0$, then $M_{y \overline{u},w}\neq 0$ for some $y \in A$.  

\item  Consider a minimal relation $R$ given by $\displaystyle{\sum_{v \in S_{k-1}} d_{v} r_{v} = 0}$ where $d_v \in \mathbb{R}$. We can rescale the equation such that coefficient of at least one row, say $r_{xu}$ for some $x \in A$ and $u \in S_{k-2}$, is 1. 

For every $y \in A$ and $w = xu\ovrl{y}$, we have $M_{xu, w} = M_{y \ovrl{u}, w} = 1$. Thus $r_{xu}$ and $r_{y \ovrl{u}}$ share exactly one common entry $w$ and no other row has a non-zero entry in $w$. Thus $d_{y \ovrl{u}} = -1$. Now consider $y \in A$. For any $z \in A$ and $w = y\ovrl{u} z$, we have $M_{y \ovrl{u}, w} = M_{\ovrl{z}u, w} = 1$. Thus $d_{\ovrl{z}u} = 1$. Hence our minimal relation is just $\displaystyle{\sum_{x \in A}r_{xu} - \sum_{y \in A}r_{y \ovrl{u}}=0}.$ 
\item We have \begin{flalign*}
\sum_{x \in A}c_{xu} &= \sum_{x \in A}\eta(xu)-\sum_{x \in A,y \notin A}\eta(xuy) \nonumber \\ &= \eta(u) - \sum_{x \notin A}\eta(xu)-\sum_{x \in A,y \notin A}\eta(xuy) \nonumber \\ &= \eta(u) - \sum_{x \notin A, y \in \RelativeBasis}\eta(xuy)-\sum_{x \in A,y \notin A}\eta(xuy) \\
\text{and similarly} \nonumber \\
\sum_{x \in A}c_{x\ovrl{u}} &= \eta(u) - \sum_{x \notin A, y \in \RelativeBasis}\eta(x\ovrl{u}y)-\sum_{x \in A,y \notin A}\eta(x\ovrl{u}y) \nonumber\\
&= \eta(u) - \sum_{x \notin A, y \in \RelativeBasis}\eta(\ovrl{y} u \ovrl{x})-\sum_{x \in A,y \notin A}\eta(\ovrl{y} u \ovrl{x})
\end{flalign*}
We see that $$ \sum_{x \notin A, y \in \RelativeBasis}\eta(xuy)+\sum_{x \in A,y \notin A}\eta(xuy) = \sum_{x \notin A, y \in \RelativeBasis}\eta(\ovrl{y} u \ovrl{x})+\sum_{x \in A,y \notin A}\eta(\ovrl{y} u \ovrl{x}).$$  Geometrically, we are looking at the same subset of $\partial^2 \free$ as a union of cylinder sets in two different ways. See Figure~\ref{F:Proof of (d)} when $\free = \la a, b, c, d \ra$. 
\begin{figure}[h]
    \centering
\includegraphics[scale = .3]{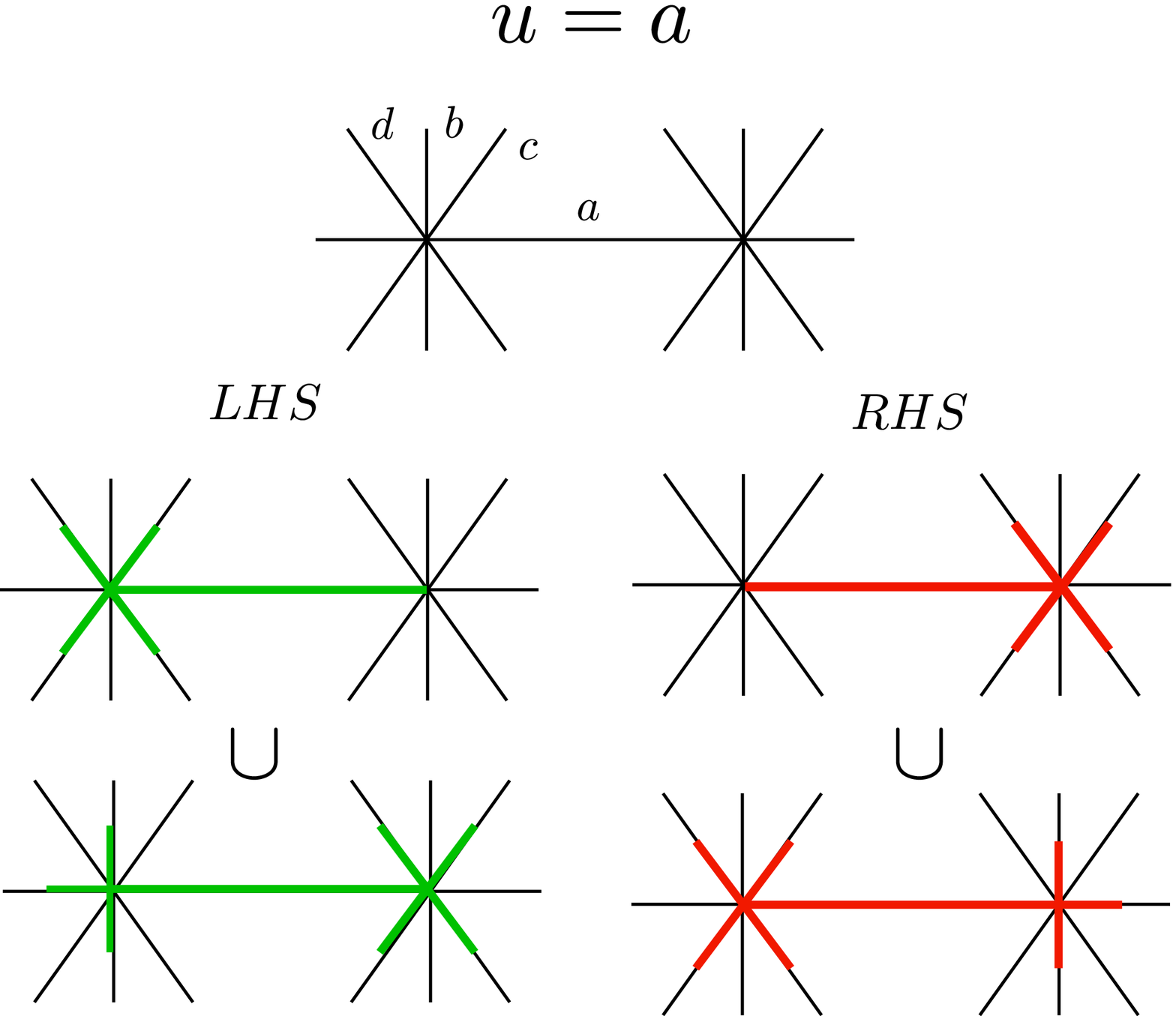}    \caption{}
    \label{F:Proof of (d)}
\end{figure}

\item Since the relations which generate all other relations among the rows of $M$ are consistent, $[M|c]$ has a solution. \end{enumerate} \end{proof}

\begin{proof}[Proof of Lemma~\ref{L:k-extension}]
Given a relative current $\eta_0$, by Lemma \ref{consistent}, we can find a signed measured current $\eta$ such that $\eta_0(w) = \eta(w)$ for all $w \in \free \setminus \ffa$. This extension need not be non-negative on all words of length less than equal to $k$. Let $-M$ for $M>0$ be the smallest value attained by $\eta(w)$ for a word $w \in \ffa$ with $|w|\leq k$. 
Consider a signed measured current $\eta_{\ffa,C}$ defined as follows: $$\eta_{\ffa,C}(w) = \frac{C}{(2s-1)^{|w|-1}} \text{  for } w \in \ffa \text{ and 0 otherwise}.$$
For $C = M(2s-1)^{k-1}$, $\eta+\eta_{\ffa,C}$ is non-negative on words of length less than equal to $k$.   
\end{proof}

\subsection{Substitution Dynamics}\label{subsec:AppendixSubstitution}
Let $\mathbb{A}$ be a finite set with cardinality greater than equal to two. Let $\zeta$ be a substitution on $\mathbb{A}$, that is, a map from $\mathbb{A}$ to the set of non-empty words on $\mathbb{A}$ which associates to a letter $e \in \mathbb{A}$ the word $\zeta(e)$ with length $|\zeta(e)|$. The substitution $\zeta$ induces a map on the set of all words on $\mathbb{A}$ by concatenation, that is, $$\zeta(x_1x_2\ldots x_m) = \zeta(x_1)\zeta(x_2)\ldots \zeta(x_m)$$
where $x_1x_2 \ldots x_m$ is a word on $\mathbb{A}$. Thus we can define iterates $\zeta^n$ for all $n \geq 1$. To the substitution $\zeta$ we associate its transition matrix, denoted $M$, where for $a,b \in \mathbb{A}$, $M(a,b)$ is the number of occurrence of $a$ in $\zeta(b)$. The transition matrix for $\zeta^n$ is given by $M^n$. Likewise, we define a map from $\mathbb{A}^{\mathbb{N}}$ to $\mathbb{A}^{\mathbb{N}}$, the set of all infinite words on $\mathbb{A}$, also denoted $\zeta$, by the formula $\zeta(x_1x_2\ldots) = \zeta(x_1)\zeta(x_2)\ldots$. 

Suppose $\zeta$ admits a fixed point, denoted $\rho \in \mathbb{A}^{\mathbb{N}}$, such that $\zeta^k(\rho) = \rho$ for all $k \geq 1$. From now on we only keep in the alphabet $\mathbb{A}$ the letters that actually appear in $\rho$. 

For every $l >0$, let $\mathbb{A}_l$ denote the set of all words on $\mathbb{A}$ of length $l$ that appear in $\rho$. Define a substitution $\zeta_l$ on $\mathbb{A}_l$ as follows: let $w = x_1 x_2 \ldots x_l \in \mathbb{A}_l$. Define $\zeta_l(w):= w_1 w_2 \ldots w_{|\zeta(x_1)|}$ where $w_i \in \mathbb{A}_l$ and $w_i$ is the length $l$ subword of $\zeta(w)$ starting at the $i^{th}$ position of $\zeta(x_1)$. In other words, $\zeta_l(w)$ consists of the ordered list of the first $|\zeta(x_1)|$ subwords of length $l$ of the word $\zeta(w)$. The substitution $\zeta_l$ extends to a map on the set of all words on $\mathbb{A}_l$. Denote by $|\cdot|_l$ the length of words on $\mathbb{A}_l$. We have $|\zeta_l(w)|_l = |\zeta(x_1)|$.  Denote by $M_l$ the transition matrix for $\zeta_l$. 
It is clear from definitions that $(\zeta^n)_l = (\zeta_l)^n $.


A substitution is called \emph{irreducible} if for every pair $a,b \in \mathbb{A}$ there exists $k:=k(a,b)$ such that $a$ occurs in $\zeta^k(b)$. A substitution is called \emph{primitive} if there exists $k$ such that for every pair $a,b \in \mathbb{A}$, $a$ occurs in $\zeta^k(b)$. 

We are interested in understanding the frequency of occurrence of words on $\mathbb{A}$ that occur in a fixed infinite word $\rho$. In \cite{Queffelec}, a theory for understanding these frequencies for a primitive substitution is developed. We want to generalize the theory of primitive substitutions to substitutions which may not be primitive but are primitive on a subset of the alphabet. 
\subsubsection{Eigenvalues for $M$ and $M_l$}
The main result from this section is Proposition~\ref{P:Eigenvalues}. Consider an alphabet $\mathbb{A} = \bigsqcup_{i = 0}^k B_i$. We define a \emph{partial order on the alphabet} as follows. First define a partial order on subsets of $\mathbb{A}$ given by $B_i > B_j$ for $i<j$. For example, $B_0 > B_1$ and so on. Thus we get a partial ordering on the letters of $\mathbb{A}$ where $a > b$ if $a \in B_i$ and $b \in B_j$ where $i <j$. The alphabet $\mathbb{A}_l$ can now be given a partial lexicographic order as well. 
We will consider a substitution $\zeta$ on $\mathbb{A}$ with the following properties: 
\begin{itemize}
\item For $a \in B_i$, $\zeta(a)$ contains letters only from $B_j$ for $j \geq i$. This implies that the transition matrix $M$ for $\zeta$ is lower triangular block diagonal with respect to the partial order on the set $\{B_i\}_{i=0}^k$. Denote the diagonal blocks of $M$ also by $B_i$ for $0 \leq i \leq k$ where $B_0$ is the top left block, followed by $B_1$ and so on. 
\item If $B_i$ is a primitive block, then $\zeta(a)$ for $a \in B_i$ ends and begins in a letter in $B_i$. 
\item $B_0$ is primitive. 
\end{itemize}

\begin{lemma}\label{L:Substitution main}Let $B_i$ be a primitive block of $M$. After possibly passing to a power of $\zeta$, there exists $a \in B_i$ such that $\zeta(a)$ begins in $a$. 
Also $\rho_a := \lim_{n \to \infty} \zeta^n(a)$ is fixed by $\zeta$, that is, $\zeta(\rho_a) = \rho_a$. If $b \in B_i$ is another letter which begins in $b$ and $\rho_b$ is fixed by $\zeta$, then the set of subwords of $\rho_a$ and $\rho_b$ are the same. \end{lemma}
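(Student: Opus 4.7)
The plan is to establish the three claims in order by using finiteness of $B_i$, primitivity of the block, and the assumption that $\zeta(a)$ begins and ends in $B_i$ for $a \in B_i$.

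First I would handle the existence of a fixed initial letter. Consider the map $\sigma \colon B_i \to B_i$ sending $a \in B_i$ to the first letter of $\zeta(a)$; this is well-defined by the second bullet in the hypotheses. Since $B_i$ is finite, iterating $\sigma$ on any starting letter must eventually enter a cycle, so there exist $a \in B_i$ and $p \geq 1$ with $\sigma^p(a) = a$. But $\sigma^p$ is precisely the ``first-letter'' map for $\zeta^p$, so after replacing $\zeta$ by $\zeta^p$ we obtain $a \in B_i$ with $\zeta(a)$ beginning in $a$.

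Next I would construct the fixed ray. Write $\zeta(a) = a\, w$ with $w$ a (possibly empty) word. A straightforward induction gives
\begin{equation*}
\zeta^{n+1}(a) = \zeta^n(a)\, \zeta^n(w),
\end{equation*}
so $\zeta^n(a)$ is a prefix of $\zeta^{n+1}(a)$ for every $n$. Primitivity of the block $B_i$ guarantees that $|\zeta^n(a)| \to \infty$ (the transition matrix $B_i$ has spectral radius greater than one by Perron--Frobenius, since it is primitive and the orbit of $a$ under $\sigma$ stays in $B_i$). Hence $\rho_a := \lim_{n \to \infty} \zeta^n(a)$ is a well-defined element of $\mathbb{A}^{\mathbb{N}}$. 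Applying $\zeta$ commutes with taking prefix-limits because $\zeta$ acts letter-by-letter, so $\zeta(\rho_a) = \lim_n \zeta^{n+1}(a) = \rho_a$.

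Finally I would prove the equality of subword sets using primitivity. Suppose $b \in B_i$ also satisfies $\zeta(b)$ starts with $b$ and let $\rho_b = \lim_n \zeta^n(b)$. Primitivity of the restriction of $\zeta$ to $B_i$ yields an integer $N$ such that every letter of $B_i$ occurs in $\zeta^N(c)$ for every $c \in B_i$; in particular $a$ occurs in $\zeta^N(b)$ and $b$ occurs in $\zeta^N(a)$. Given any finite subword $u$ of $\rho_a$, choose $m$ large so that $u$ appears in $\zeta^m(a)$; then $u$ appears inside $\zeta^{m+N}(b)$, which is a prefix of $\rho_b$. Thus every subword of $\rho_a$ is a subword of $\rho_b$, and by the symmetric argument the reverse inclusion holds as well.

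The only real subtlety I anticipate is making sure the limits are taken in a well-defined sense in $\mathbb{A}^{\mathbb{N}}$ and that the first bullet in the hypotheses (that $\zeta(a)$ stays in $B_j$ for $j \geq i$) combined with the ``begins and ends in $B_i$'' bullet is enough to keep all initial letters inside $B_i$ during the pigeonhole argument; these are immediate once unpacked. The Perron--Frobenius input for divergence of $|\zeta^n(a)|$ is the only place where primitivity of the block (rather than mere irreducibility) is essential.
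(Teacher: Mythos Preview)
Your argument is correct and is essentially the paper's proof: the first-letter self-map on the finite set $B_i$, the prefix chain $\zeta^n(a)$ inside $\zeta^{n+1}(a)$, and primitivity to match subwords between $\rho_a$ and $\rho_b$. The only difference is that you supply more detail (the paper's proof is three lines); your closing remark slightly overstates the role of Perron--Frobenius, since $|\zeta^n(a)|\to\infty$ already follows once $\zeta(a)=aw$ with $w$ nonempty.
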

\begin{proof}
Consider a function $f: B_i \to B_i$ where for $a \in B_i$, $f(a)$ is the first letter of $\zeta(a)$. Since $B_i$ is a finite set, some power of $f$ has a fixed point. After possibly passing to a power, let $a \in B_i$ be a fixed point of $f$. Since $\zeta(a)$ begins with $a$, we have that $\zeta^n(a)$ begins with $\zeta^{n-1}(a)$ for every $n>0$. Thus $\rho_a$ is fixed by $\zeta$. Since $B_i$ is a primitive block, $\zeta^m(a)$ contains $b$ for some $n>0$. Thus subwords that appear in $\rho_b$ also appear in $\rho_a$ and vice versa.  
\end{proof}
We say a word $w$ on $\mathbb{A}$ \emph{crosses} $B_i$ if $w$ contains a letter in $B_i$. We are interested in understanding the frequency of occurrence of words in $\rho:= \rho_0$ which cross $B_0$.   

\begin{example}
Let $\mathbb{A} = \{ a, b, c, d \}$. Let $\zeta$ be given as    
$\zeta(a) = abbab, \zeta(b) = bababbab, \zeta(c) = cad, \zeta(d) = dcad$. 
The transition matrix for $\zeta$ and $\zeta_2$ are given by 
\begin{center}
\begin{tabular}{r c}
& $c\quad d\quad a\quad b$ \\
$M=$ & $\begin{bmatrix}
		1 & 1 & 0 & 0  \\
	    1 & 2 & 0 & 0 \\ 
	    1 & 1 & 2 & 3 \\ 
	    0 & 0 & 3 & 5
	    \end{bmatrix}$, 
\end{tabular}
\begin{tabular}{r c}
& $ca\,\,\, da \,\,\, dc\,\,\,ad \,\,\, bd \,\,\,ab \,\,\,ba \,\,\,bb$ \\
$M_2$ = & $\begin{bmatrix}
   1& 1& 1& 0& 0& 0& 0& 0 \\   
   1& 1& 0& 0& 0& 0& 0& 0 \\
   0& 1& 2& 0& 0& 0& 0& 0 \\
   0& 1& 1& 0& 0& 0& 0& 0 \\
   0& 0& 0& 1& 1& 0& 0& 0 \\
   0& 0& 0& 2& 3& 2& 3& 3 \\
   0& 0& 0& 1& 3& 1& 4& 0 \\
   0& 0& 0& 1& 1& 2& 1& 2 
  \end{bmatrix}$.\end{tabular}
\end{center}
\end{example}

We now want to understand the spectrum of $M_l$. 
\begin{proposition}\label{P:Eigenvalues}
For every $l \geq 2$, the eigenvalues of $M_l$ are those of $M$ with possibly some additional eigenvalues of absolute value less than equal to one.  \end{proposition}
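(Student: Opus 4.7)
The plan is to construct a linear map that exhibits $M$ as a quotient of $M_l$. Define the projection $\pi : \mathbb{R}^{\mathbb{A}_l} \to \mathbb{R}^{\mathbb{A}}$ sending a length-$l$ word $w = x_1 x_2 \cdots x_l$ to its first letter $x_1$, extended linearly. This is surjective, since each $a \in \mathbb{A}$ (which by assumption appears in $\rho$) is the first letter of some $w \in \mathbb{A}_l$. A direct computation from the definition of $\zeta_l$ gives the intertwining
$$\pi \circ M_l \;=\; M \circ \pi.$$
Indeed, if $w$ begins with $x_1$, then the length-$l$ subwords $w_1, \ldots, w_{|\zeta(x_1)|}$ comprising $\zeta_l(w)$ have first letters equal, in order, to the first $|\zeta(x_1)|$ letters of $\zeta(w)$, which are exactly the letters of $\zeta(x_1)$; their sum is $Mx_1 = M\pi(w)$.

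Consequently $\ker\pi$ is $M_l$-invariant, and $M_l$ descends to $M$ on the quotient $\mathbb{R}^{\mathbb{A}_l}/\ker\pi \cong \mathbb{R}^{\mathbb{A}}$. The induced short exact sequence of $M_l$-modules gives the factorization of characteristic polynomials
$$\det(xI - M_l) \;=\; \det(xI - M)\cdot \det\!\bigl(xI - M_l|_{\ker\pi}\bigr),$$
so the spectrum of $M_l$ consists of the eigenvalues of $M$ together with those of $M_l|_{\ker\pi}$.

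It then suffices to prove that every eigenvalue of $M_l|_{\ker\pi}$ has absolute value at most $1$. The key input is the uniform bound: for any two words $w, w' \in \mathbb{A}_l$ starting with the same letter $a$,
$$\bigl\|M_l^n(w - w')\bigr\|_1 \;\leq\; 2(l-1) \qquad \text{for every } n \geq 0.$$
This follows from the identity $(\zeta^n)_l = (\zeta_l)^n$ recorded in the paper: $M_l^n w$ is the ordered list of the first $|\zeta^n(a)|$ length-$l$ subwords of $\zeta^n(w)$, and those $|\zeta^n(a)| - l + 1$ subwords lying wholly inside the common prefix $\zeta^n(a)$ cancel in $M_l^n w - M_l^n w'$, leaving at most $2(l-1)$ ``boundary'' subwords that straddle the cut between $\zeta^n(a)$ and $\zeta^n(x_2)\cdots$. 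Decomposing $\ker\pi = \bigoplus_a \ker(\pi|_{V_a})$, where $V_a$ is the span of words starting with $a$, each summand is spanned by differences $w - w'$ of the above form. Hence $\|M_l^n|_{\ker\pi}\|$ is uniformly bounded in $n$, and a standard spectral radius argument then forces every eigenvalue of $M_l|_{\ker\pi}$ to have modulus $\leq 1$.

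The main obstacle is pinpointing the correct projection and verifying the uniform $\ell^1$-bound on iterates of $M_l$ on $\ker\pi$; once these are in place, the factorization of characteristic polynomials and the spectral radius deduction are routine. The uniformity in $n$ of the bound $2(l-1)$ is the critical point, because any growth even of order $n$ would still be compatible with eigenvalues on the unit circle with nontrivial Jordan blocks, but exponential growth is ruled out precisely by the observation that the ``interior'' subwords of $\zeta^n(a)$ cancel identically.
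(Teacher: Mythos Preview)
Your argument is correct and is a genuinely different---and cleaner---route than the paper's. Both proofs ultimately rest on the same observation (Lemma~\ref{L:Ml}(c) in the paper): for any two words $w,w'\in\mathbb{A}_l$ with the same first letter, the columns $C_{n,w}$ and $C_{n,w'}$ of $M_l^n$ differ by at most a bounded amount uniformly in $n$. The difference is in how this is packaged.

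The paper exploits the lower-triangular block structure of $M$: it partitions $\mathbb{A}_l$ into pieces $\widetilde{B_i}$ and $\overline{B_i}$ according to which block the first letter lies in and whether the word crosses a higher block, and then performs explicit row and column operations on each diagonal block $\widetilde{B_i}-\lambda I$ to exhibit $B_i-\lambda I$ as a sub-block (Lemma~\ref{L:B_i}). The remaining blocks $Q$ and $\overline{B_i}$ are then shown to have bounded entries under iteration, forcing their eigenvalues to have modulus at most one.

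Your projection $\pi$ accomplishes the same factorisation of characteristic polynomials in one stroke, with no reference to the block decomposition of $M$ at all: the intertwining $\pi M_l=M\pi$ is a direct consequence of the definition of $\zeta_l$, and the uniform $\ell^1$-bound on $M_l^n$ restricted to $\ker\pi$ follows immediately from the cancellation of ``interior'' subwords. This is more conceptual and in fact proves the statement for \emph{any} substitution $\zeta$ with a fixed point $\rho$, not only those with the specific block structure assumed in the paper. The paper's approach, on the other hand, gives finer information about which eigenvalues come from which blocks $\widetilde{B_i}$, $\overline{B_i}$---information that is used downstream in Lemma~\ref{L:B_i} and in the analysis of the sub-matrix $\mathcal{B}_l$.
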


The three lemmas that follow will be used to prove Proposition~\ref{P:Eigenvalues}. 
Since $(\zeta^n)_l = (\zeta_l)^n$, we have $(M^n)_l = (M_l)^n$, which we now denote by $M_l^n$ unless the order needs to be specified.  Denote the rows and columns of $M$ by $R_{x}$ and $C_{x}$ for $x \in \mathbb{A}$, those of $M_l$ by $R_w$ and $C_w$ and those of $M_l^n$ by $R_{n,w}$ and $C_{n,w}$ for $w \in \mathbb{A}_l$. 

\begin{lemma}\label{L:Ml} Let $n\geq 2$. Let $M, M_l, M^n_l$ be transition matrices for $\zeta, \zeta_l, \zeta_l^n$ respectively. Then 
\begin{enumerate}[(a)]
\item $M_l$ is a lower triangular block diagonal matrix with respect to the partial order on $\mathbb{A}_l$.  
\item Let $w \in \mathbb{A}_l$ start with $x \in \mathbb{A}$. Then the sum of the entries of $C_w$ is the same as the sum of the entries of $C_x$ which is equal to $|\zeta(x)|$. 
\item Let $w_1, w_2 \in \mathbb{A}_l$ be such that both words begin with $x \in \mathbb{A}$. Then the entries of $C_{w_1}$ and $C_{w_2}$ differ at most by $(l-1)$. The entries of $C_{n,w_1}$ and $C_{n,w_2}$ also differ at most by $(l-1)$.  
\end{enumerate}\end{lemma}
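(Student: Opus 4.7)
My plan is to prove each of (a), (b), (c) directly from the definition of $\zeta_l$: if $w = x_1 x_2 \cdots x_l \in \mathbb{A}_l$, then $\zeta_l(w)$ is the ordered list $w_1 w_2 \cdots w_{|\zeta(x_1)|}$, where $w_i$ is the length-$l$ subword of $\zeta(w) = \zeta(x_1)\zeta(x_2)\cdots\zeta(x_l)$ beginning at position $i$ of $\zeta(x_1)$. I expect (a) and (b) to be essentially formal; the only point needing a small argument is (c), and even there the idea is just careful bookkeeping of which length-$l$ windows of $\zeta(w)$ are entirely contained in $\zeta(x_1)$.

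For (a), I will fix $w$ with $x_1 \in B_i$ and observe that every element of $\zeta_l(w)$ begins with a letter of $\zeta(x_1)$, which by the standing hypothesis on $\zeta$ lies in some $B_j$ with $j \geq i$. Under the lexicographic partial order on $\mathbb{A}_l$, this means every word appearing in $\zeta_l(w)$ is $\leq w$ in block index, so $M_l(v,w) = 0$ whenever $v > w$; this gives the lower triangular block diagonal structure.

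For (b), the column sum $\sum_{v \in \mathbb{A}_l} M_l(v,w)$ counts (with multiplicity) the length-$l$ subwords making up $\zeta_l(w)$, which by construction is $|\zeta_l(w)|_l = |\zeta(x_1)|$. This coincides with $\sum_{y \in \mathbb{A}} M(y, x_1) = |\zeta(x_1)|$, the column sum of $M$ at $x_1$, so the two column sums agree.

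For (c), write $w_1 = x u_1$ and $w_2 = x u_2$ with $|u_1| = |u_2| = l - 1$, so that $\zeta(w_i) = \zeta(x)\zeta(u_i)$. When $|\zeta(x)| \geq l$, the $i$-th entries of $\zeta_l(w_1)$ and $\zeta_l(w_2)$ agree for $i = 1, \ldots, |\zeta(x)| - (l-1)$ because these windows sit entirely inside the common prefix $\zeta(x)$; only the last $l - 1$ windows can extend into $\zeta(u_i)$ and differ. This yields $|M_l(v, w_1) - M_l(v, w_2)| \leq l - 1$ for every $v \in \mathbb{A}_l$. When $|\zeta(x)| < l$, every window of $\zeta_l(w_i)$ may depend on $u_i$, but the total number of windows is $|\zeta(x)| \leq l - 1$, so the bound still holds. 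For the statement about $M_l^n$, I will apply the identical argument to the substitution $\zeta^n$, invoking the identity $(\zeta^n)_l = (\zeta_l)^n$ (equivalently $(M^n)_l = M_l^n$) already noted in the text, and comparing the first $|\zeta^n(x)| - (l-1)$ windows of $\zeta^n_l(w_i)$, which again sit inside the common prefix $\zeta^n(x)$. The only potential pitfall is the short-image edge case $|\zeta(x)| < l$ (and its analogue for $\zeta^n$), which needs to be handled explicitly as above so that the bound $l-1$ is not lost.
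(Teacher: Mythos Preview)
Your proposal is correct and follows essentially the same approach as the paper's own proof. The paper treats (a) as immediate from the definitions, while you unpack it slightly by tracking the first letter of each window; for (b) and (c) your argument is identical to the paper's, including the case split on whether $|\zeta(x)| \geq l$ and the passage to $\zeta^n$ via $(\zeta^n)_l = (\zeta_l)^n$.
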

\begin{proof}\begin{enumerate}[(a)]
\item Clear from definitions of $M$ and $M_l$. 
\item Let $w,x$ be as in the statement of the lemma. Then $|\zeta_l(w)|_l = |\zeta(x)|$, which implies that column sum of $C_w$ is same as that of $C_x$. 
\item Let $w_1, w_2, x$ be as in the statement of the lemma. Then $\zeta_l(w_1)$ and $\zeta_l(w_2)$ differ only when the length $l$ words starting at some position in $\zeta(x)$ are not subwords of $\zeta(x)$. If $|\zeta(x)| \geq l$, then the first time such a word occurs is when it starts at position $(l-1)$ from the end of $\zeta(x)$. If $|\zeta(x)| < l$, then $\zeta_l(w_1)$ and $\zeta_l(w_2)$ can differ in at most $|\zeta(x)| < l$ length $l$ words. Thus there are at most $(l-1)$ such words. Replace $\zeta, \zeta_l$ by $\zeta^n, (\zeta^n)_l$ above to conclude that entries of $C_{n,w_1}$ and $C_{n,w_2}$ also differ at most by $(l-1)$.  \qedhere
\end{enumerate}\end{proof}

\begin{lemma}\label{BoundingEigenvalue}
If $Q$ is a $s \times s$ matrix such that absolute values of all its entries are bounded above by $\delta >0$, then the absolute values of the eigenvalues of $Q$ are bounded above by $s \delta$. \end{lemma}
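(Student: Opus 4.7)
The plan is to bound the spectral radius of $Q$ via an elementary estimate on a normalized eigenvector. Let $\lambda$ be an eigenvalue of $Q$ (viewed as a complex number, since $Q$ is a priori a real matrix but may have complex eigenvalues) and let $v = (v_1,\ldots,v_s)^T$ be a corresponding nonzero eigenvector, which we may take to have complex entries. We rescale so that $\max_{1\le i\le s}|v_i|=1$, and fix an index $i_0$ with $|v_{i_0}|=1$.

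From $Qv = \lambda v$, reading off the $i_0$-th coordinate gives $\lambda v_{i_0} = \sum_{j=1}^s Q_{i_0 j} v_j$. Taking absolute values and using the hypothesis $|Q_{i_0 j}|\le \delta$ together with $|v_j|\le 1$, we get
\[
|\lambda| \;=\; |\lambda|\,|v_{i_0}| \;=\; \Bigl|\sum_{j=1}^s Q_{i_0 j} v_j\Bigr| \;\le\; \sum_{j=1}^s |Q_{i_0 j}|\,|v_j| \;\le\; s\delta,
\]
which is the desired bound. The whole argument is therefore a one-line application of the triangle inequality, once the eigenvector has been normalized in the $\ell^\infty$ sense.

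There is essentially no obstacle here; the only subtlety is remembering to normalize and to evaluate the eigenvalue equation at a coordinate where the eigenvector attains its maximum modulus (this is what prevents the trivial bound $|\lambda|\,|v_j|\le |\lambda|$ from going the wrong way). Equivalently, one could cite that the spectral radius is bounded by any operator norm and use the $\ell^\infty$ operator norm $\|Q\|_\infty = \max_i \sum_j |Q_{ij}| \le s\delta$, or invoke the Gershgorin circle theorem to place every eigenvalue in a disc of radius at most $s\delta$ about the origin; all three routes give the same constant $s\delta$.
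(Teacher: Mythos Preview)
Your proof is correct and essentially the same as the paper's: both apply the triangle inequality to the eigenvalue equation $Qv=\lambda v$ coordinatewise to obtain $|\lambda v_i|\le \delta\sum_j|v_j|$. The only cosmetic difference is that you normalize $v$ in $\ell^\infty$ and read off a single coordinate, whereas the paper sums the inequalities over all $i$ (effectively an $\ell^1$ normalization) before dividing through.
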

\begin{proof}
 Let $\lambda \neq 0$ be an eigenvalue of $Q$ and let $v = (v_1, \ldots, v_s)$ be a corresponding eigenvector. Let $r_i$ denote rows of $Q$. Then $|r_i \cdot v| = |\lambda v_i|$ which gives $|\lambda v_i| \leq \delta \sum_{j=1}^s |v_j|$ for every $1 \leq i \leq s$. Adding all the inequalities together we get $|\lambda| \leq s \delta$.   \end{proof}

For every $B_i \subset \mathbb{A}$, let $\widetilde{B_i} \subset \mathbb{A}_l$ be the set of all words $w$ that start with a letter in $B_i$ and such that $w$ does not cross $B_j$ for any $j< i$. For every $B_i \subset \mathbb{A}$, let $\overline{B_i} \subset \mathbb{A}_l$ be the set of all words $w$ that start with a letter in $B_i$ and there exists a $j< i$ such that $w$ crosses $B_j$ (note that $\overline{B_0}$ is empty). Then we have that $\widetilde{B_i} \cup \overline{B_i}$ is the union of all words of length $l$ that start with a letter in $B_i$.  The partial order on $\mathbb{A}_l$ defined earlier gives that $\widetilde{B_0}> \overline{B_1} > \widetilde{B_1} > \ldots > \overline{B_k} > \widetilde{B_k}$. The matrix $M_l$ is lower triangular block diagonal with respect to this partial order on $\mathbb{A}_l$. For a subset $S \subset \mathbb{A}_l$, denote by $S$ the transition matrix of $\zeta_l$ restricted to $S$. 


\begin{lemma}\label{L:B_i}
\begin{enumerate}[(a)]
\item For every $0 \leq i \leq k$, the characteristic polynomial of $B_i$ divides the characteristic polynomial of $\widetilde{B_i}$. 
\item The eigenvalues of $\widetilde{B_i}$ are those of $B_i$ with possibly some additional eigenvalues of absolute value less than equal to one. 
\item The eigenvalues of $\overline{B_i}$ have absolute value less than equal to one. 
\end{enumerate}\end{lemma}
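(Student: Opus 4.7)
My plan is to handle (c) first via a direct column-sum bound, then to handle (a) and (b) together by constructing an intertwining map between $\widetilde{B_i}$ and $B_i$ and analysing its kernel.

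For (c), I would bound the column sums of every power of $\overline{B_i}$. Fix $w \in \overline{B_i}$ starting with $x_1 \in B_i$. Then $\zeta_l^n(w)$ has length $|\zeta^n(x_1)|$, and its entries are the length-$l$ subwords of $\zeta^n(w)$ starting at positions $m \in \{1,\ldots,|\zeta^n(x_1)|\}$. By the standing assumption on $\zeta$, iterated, every letter of $\zeta^n(x_1)$ lies in $\bigcup_{j\geq i} B_j$, so any subword contained entirely in $\zeta^n(x_1)$ does not cross $B_j$ for $j<i$ and is therefore not in $\overline{B_i}$. Hence only the at most $l-1$ positions $m\in\{|\zeta^n(x_1)|-l+2,\ldots,|\zeta^n(x_1)|\}$, where the subword extends into $\zeta^n(x_2)\cdots\zeta^n(x_l)$, can contribute entries to $\overline{B_i}$. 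Since $M_l$ is block lower triangular, $\overline{B_i}^n$ coincides with the $\overline{B_i}$-diagonal-block of $M_l^n$, so its column sums are at most $l-1$ for every $n\geq 1$. Thus $\rho(\overline{B_i}) \leq \lim_n (l-1)^{1/n} = 1$.

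For (a) and (b), I would introduce the linear map
\[
\Pi_i \colon \mathbb{R}^{\widetilde{B_i}} \to \mathbb{R}^{B_i},\qquad (\Pi_i v)_y \;=\; \sum_{\substack{w\in\widetilde{B_i} \\ w \text{ starts with } y}} v_w, \qquad y\in B_i.
\]
A direct computation, parallel to the identity $\Pi M_l = M \Pi$ in the absolute primitive case and using Lemma~\ref{L:Ml}(b), yields the intertwining relation $\Pi_i\, \widetilde{B_i} = B_i\, \Pi_i$. Assuming $\Pi_i$ is surjective, $\ker\Pi_i$ is $\widetilde{B_i}$-invariant and the induced map on the quotient $\mathbb{R}^{\widetilde{B_i}}/\ker\Pi_i \cong \mathbb{R}^{B_i}$ is $B_i$. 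This gives
\[
\mathrm{char}(\widetilde{B_i})(\lambda) \;=\; \mathrm{char}(B_i)(\lambda)\cdot \mathrm{char}\bigl(\widetilde{B_i}\bigl|_{\ker\Pi_i}\bigr)(\lambda),
\]
which proves (a). For (b), I would bound the eigenvalues of $\widetilde{B_i}|_{\ker\Pi_i}$ using Lemma~\ref{L:Ml}(c): for each $y\in B_i$ pick a reference word $w_y^{0}\in\widetilde{B_i}$ starting with $y$; then the differences $e_w - e_{w_y^{0}}$ (over all $y\in B_i$ and all $w\in\widetilde{B_i}\setminus\{w_y^{0}\}$ starting with $y$) form a basis of $\ker\Pi_i$. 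By Lemma~\ref{L:Ml}(c), $\|\widetilde{B_i}^n(e_w - e_{w_y^{0}})\|_\infty = \|C_{n,w} - C_{n,w_y^{0}}\|_\infty \leq l-1$ for every $n\geq 1$, so in this fixed basis the operator norms $\|\widetilde{B_i}^n|_{\ker\Pi_i}\|$ are uniformly bounded, giving $\rho(\widetilde{B_i}|_{\ker\Pi_i}) \leq 1$.

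The main obstacle I anticipate is establishing surjectivity of $\Pi_i$, that is, showing that for each $y\in B_i$ there exists a length-$l$ word in $\rho$ beginning with $y$ whose letters all lie in $\bigcup_{j\geq i} B_j$. When $B_i$ is primitive, Lemma~\ref{L:Substitution main} supplies a fixed letter $a_i\in B_i$ whose iterates $\zeta^n(a_i)$ have letters entirely in $\bigcup_{j\geq i} B_j$ and, by primitivity, contain every $y\in B_i$ at positions at least $l-1$ from the end, and these iterates occur as subwords of $\rho$. Non-primitive blocks (notably zero strata, where $B_i=0$ and the same subword argument as in (c) forces $\widetilde{B_i}=0$) have to be handled more carefully, either by a case analysis exploiting the CT structure so that $\mathbb{A}_l$ contains enough distinct words starting with each $y$, or by replacing $\Pi_i$ with its restriction to the invariant image subspace and verifying that the characteristic polynomial of $B_i$ is recovered from that restriction.
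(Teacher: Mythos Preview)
Your approach is essentially the paper's, rephrased in invariant linear-algebraic language. Where you use the intertwining relation $\Pi_i\widetilde{B_i}=B_i\Pi_i$ and the kernel basis $\{e_w-e_{w_y^0}\}$, the paper performs the equivalent explicit row operations (replace $R_{w_y^0}$ by $\sum_{u\text{ starts with }y}R_u$) and column operations (subtract $C_{w_y^0}$ from each other $C_u$ starting with $y$) directly on $\widetilde{B_i}-\lambda I$, obtaining a block-triangular matrix with diagonal blocks $B_i-\lambda I$ and a residual $Q$; your $\widetilde{B_i}|_{\ker\Pi_i}$ is precisely this $Q$, and both arguments bound its spectrum via Lemma~\ref{L:Ml}(c) applied uniformly to all powers $\zeta^n$. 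Part~(c) is the same in both proofs.

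You are right to flag surjectivity of $\Pi_i$, and the paper makes the identical assumption without comment (it simply writes ``for every $x\in B_i$, choose a word $w\in\widetilde{B_i}$ such that $w$ starts with $x$''). The assumption can genuinely fail for $i\geq 1$: take $B_0=\{a\}$, $B_1=\{b\}$, $\zeta(a)=aba$, $\zeta(b)=b$; then $\rho=abab\cdots$, and for $l=2$ one has $\widetilde{B_1}=\emptyset$ while $B_1=[1]$, so (a) is literally false as stated. However, the only place parts (a)--(b) are invoked downstream is for $i=0$, in the analysis of $\mathcal{B}_l=\widetilde{B_0}\cup\bigcup_{i\geq1}\overline{B_i}$. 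For $i=0$ the condition ``does not cross $B_j$ for $j<0$'' is vacuous, so $\widetilde{B_0}$ consists of \emph{all} words in $\mathbb{A}_l$ beginning with a letter of $B_0$, and since every letter of $\mathbb{A}$ occurs in the infinite word $\rho$, $\Pi_0$ is automatically surjective. Your case analysis is therefore unnecessary for the application; the lemma is simply stated more generally than what is actually proved or needed.
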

\begin{proof}\begin{enumerate}[(a)]
\item Consider the matrix $P_i = \widetilde{B_i} - \lambda I$. We will do certain row and column operations on this matrix to reduce it to a lower triangular block diagonal matrix with $B_i - \lambda I$ as a diagonal block, which would imply that the characteristic polynomial of $B_i$ divides the characteristic polynomial of $\widetilde{B_i}$. For later use we denote the other diagonal block of $P_i$ by $Q$. 

We first perform the following row operations: for every $x \in B_i$, choose a word $w \in \widetilde{B_i}$ such that $w$ starts with $x$. For every such $w$, replace the row $R_w$ of $\widetilde{B_i}$ by the sum of rows $R_u$ for all $u \in \widetilde{B_i}$ that start with $x$. Rearrange the rows and columns such that top left block is indexed by the chosen words $w$. The rearranged matrix is denoted by $P_i^{\p}$. The top left block of $P_i^{\p}$ is exactly $B_i - \lambda I$. Indeed, suppose $w, u \in \widetilde{B_i}$ in the top left block of $P_i^{\p}$ start with $x,y \in B_i$, respectively. Then $P_{i}^{\p}(w,v)$ is exactly the number of occurrences of $x$ in $\zeta(y)$. 

Now for any two columns $C_{w_1}$ and $C_{w_2}$ of $P_i^{\p}$, where $w_1, w_2$ start with the same letter in $B_i$, the first few entries (as many as the number of rows in the top left block of $P_i^{\p}$) are equal. Now perform column operations as follows: for every $x \in B_i$ and the chosen word $w$ in the top left block, subtract $C_{w}$ from $C_u$ for every $u \neq w$ that start with $x$. Thus we have a lower triangular block diagonal matrix, again denoted $P_i^{\p}$, with  diagonal blocks $B_i - \lambda I$ and $Q$. 

\item Consider the lower block diagonal matrix $P_i^{\p}$ from above. Eigenvalues of $P_i^{\p}$ not coming from the block $B_i-\lambda I$ come from the lower block, denoted $Q$. By Lemma \ref{L:Ml}(c), the entries of $Q$ are bounded in absolute value by $(l-1)$. We claim that the eigenvalues of $Q$ are bounded in absolute value by one.  

Let $\lambda_0$ be an eigenvalue of $Q$ and hence of $\widetilde{B_i}$. Then for $n\geq 1$, $\lambda_0^n$ is an eigenvalue of $(\widetilde{B_i})^n$ which is a diagonal block of $(M_l)^n = (M^n)_l$.   
Thus $\lambda_0^n$ is an eigenvalue of $(\widetilde{B_i})^n$ that does not come from eigenvalue of $B_i^n$, the corresponding diagonal block of $M^n$. Applying part $(a)$ to $\zeta^n$, $(\widetilde{B_i})^n$ can also be put in a lower triangular block diagonal form with diagonal blocks $B_i^n - \lambda I$ and $Q^{\p}$. Since the entries of $Q^{\p}$ are bounded by $(l-1)$,  by Lemma \ref{BoundingEigenvalue}, every eigenvalue of $Q^{\p}$ is bounded in absolute value by size of $Q^{\p}$ times $(l-1)$. Thus $|\lambda_0^n|$ is uniformly bounded which can happen only when $|\lambda_0| \leq 1$.     

Thus all eigenvalues of $\widetilde{B_i}$ are eigenvalues of $B_i$ with the exception of some eigenvalues whose absolute value is less than equal to one. 

\item Let $\lambda$ be an eigenvalue of $\overline{B_i}$. Then $\lambda^n$ is an eigenvalue of $(\overline{B_i})^n$, the diagonal block of $(M^n)_l$ corresponding to words that start with a letter in $B_i$ and there exists a $j <i$ such that they cross $B_j$. For every $n$, the entries of $(\overline{B_i})^n$ are bounded by $(l-1)$. Indeed, if $w$ is a length $l$ word that starts with $x$, then only the words that start at some position less than $l$ away from the last letter of $\zeta^n(x)$ belong to $(\overline{B_i})^n$. This implies that eigenvalues of $(\overline{B_i})^n$ are uniformly bounded. That is, $|\lambda^n|$ is uniformly bounded which can happen only when $|\lambda| \leq 1$. \qedhere    
\end{enumerate}\end{proof}

\begin{proof}[Proof of Proposition~\ref{P:Eigenvalues}]
Since eigenvalues of a lower triangular block diagonal matrix are obtained from eigenvalues of each block the proposition follows from Lemma~\ref{L:B_i}. 
\end{proof}
\subsubsection{Frequency of words}
The main result in this subsection is Proposition~\ref{P:MainResultSubstitution}. Recall that we want to understand the frequency of occurrence of words which cross $B_0$ in $\rho$. Let $\lambda$ be the top eigenvalue of the block $B_0$ of $M$. Consider a subset $\mathcal{B}_l :=  \widetilde{B_0} \cup (\bigcup_{i=1}^k \overline{B_i})$ of $\mathbb{A}_l$. Then the set of all length $l$ words that cross $B_0$ is a subset of $\mathcal{B}_l$. The transition matrix of $\zeta_l$ restricted to $\mathcal{B}_l$ is also lower triangular block diagonal with respect to the order $\widetilde{B_0} > \overline{B_1} > \ldots > \overline{B_k}$ of words in $\mathcal{B}_l$. Then by Lemma~\ref{L:B_i}, $\lambda>1$ is the top eigenvalue of $\mathcal{B}_l$ with multiplicity one. Since $\mathcal{B}_l$ is a diagonal block of $M_l$, we have $M^n_l(w,\alpha) = \mathcal{B}_l^n(w,\alpha)$ for all $w, \alpha \in \mathbb{A}_l$ that cross $B_0$. 

For $w, v$ words on $\mathbb{A}$ or $\mathbb{A}_l$ let $(w,v)$ denote the number of occurrences of $w$ in $v$. 
\begin{lemma}
 Let $a \in B_0$ and let $\rho_a = \lim_{n \to \infty} \zeta^n(a)$ be such that $\zeta(\rho_a) = \rho_a$. Let $w \in \mathbb{A}_l$ be a word that crosses $B_0$. Then $$  \text{frequency of occurrence of $w$ in $\rho_a$} = \lim_{n \to \infty} \frac{(w, \zeta^n(a))}{\lambda^n}=: d_{w,a}$$ exists and is non-negative. Here $\lambda$ is the top eigenvalue of $B_0$. 
\end{lemma}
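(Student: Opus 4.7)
The plan is to express $(w, \zeta^n(a))$ in terms of matrix entries of powers of $\mathcal{B}_l$ (up to a bounded additive error) and then extract the limit from the Jordan decomposition of $\mathcal{B}_l$, using the spectral information already established in Proposition~\ref{P:Eigenvalues} and Lemma~\ref{L:B_i}.

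First, since $\zeta(a)$ begins with $a$, the ray $\rho_a$ starts with $a$, and for $n$ large enough that $|\zeta^n(a)| \geq l$ let $\alpha \in \mathbb{A}_l$ denote the length-$l$ prefix of $\rho_a$. Both $\zeta^n(a)$ and $\zeta^n(\alpha)$ are prefixes of $\rho_a$, using $\zeta(\rho_a) = \rho_a$, and by the very definition of $\zeta_l$ the word $\zeta_l^n(\alpha)$ is the ordered list of the first $|\zeta^n(a)|$ length-$l$ subwords of $\zeta^n(\alpha)$. The length-$l$ subwords of $\zeta^n(a)$ itself are those at positions $0, 1, \dots, |\zeta^n(a)| - l$, so they agree with all but the last $l-1$ entries of $\zeta_l^n(\alpha)$. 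Hence
\[
\bigl|(w, \zeta^n(a)) - M_l^n(w, \alpha)\bigr| \leq l - 1,
\]
and since $w$ crosses $B_0$ and $\alpha$ begins with $a \in B_0$, both indices lie in $\mathcal{B}_l$ and $M_l^n(w, \alpha) = \mathcal{B}_l^n(w, \alpha)$ by the lower triangular block diagonal structure of $M_l$. It therefore suffices to establish the existence and non-negativity of $\lim_{n\to\infty} \mathcal{B}_l^n(w, \alpha)/\lambda^n$.

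Next I would apply Jordan decomposition to $\mathcal{B}_l$. By Lemma~\ref{L:B_i}(b),(c) the spectrum of $\mathcal{B}_l$ consists of eigenvalues of the primitive block $B_0$ together with extra eigenvalues of modulus at most one. Since $B_0$ is primitive, Perron--Frobenius gives that $\lambda > 1$ is a simple eigenvalue of $B_0$ and strictly dominates all other eigenvalues of $B_0$ in modulus; consequently $\lambda$ is a simple eigenvalue of $\mathcal{B}_l$ strictly dominating every other eigenvalue. Writing $\mathcal{B}_l^n = \lambda^n P + E_n$, where $P$ is the rank-one spectral projection onto the $\lambda$-eigenspace and $E_n$ collects the contribution of the complementary invariant subspace (on which $\mathcal{B}_l$ has spectral radius at most $1$), one has $\|E_n\| = O(n^d)$ for some $d \geq 0$ reflecting the maximum Jordan block size among the non-$\lambda$ eigenvalues, and thus $E_n = o(\lambda^n)$. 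Dividing by $\lambda^n$ and combining with the bounded-error estimate above yields
\[
\frac{(w, \zeta^n(a))}{\lambda^n} \;\xrightarrow[n\to\infty]{}\; P(w, \alpha) \;=:\; d_{w, a}.
\]
Non-negativity is then immediate, since each term $(w, \zeta^n(a))/\lambda^n$ is non-negative.

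The main technical point is ensuring the spectral gap between $\lambda$ and the rest of the spectrum of $\mathcal{B}_l$; this is precisely what the block structure of $M_l$ together with Lemma~\ref{L:B_i} supplies, so no further spectral work is needed. The boundary discrepancy between $\zeta^n(a)$ and the length-$l$ padding $\alpha$, which might at first seem to obstruct the identification with matrix powers, is harmless because it contributes an error of size $O(l) = o(\lambda^n)$.
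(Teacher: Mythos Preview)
Your approach is essentially the paper's: pass from $(w,\zeta^n(a))$ to $M_l^n(w,\alpha)=\mathcal{B}_l^n(w,\alpha)$ up to a bounded error, then invoke that $\lambda$ is the strictly dominant simple eigenvalue of $\mathcal{B}_l$ to conclude convergence of $\mathcal{B}_l^n/\lambda^n$. The paper states this last step in one line (``the limit exists because $\lambda$ is the top eigenvalue of $\mathcal{B}_l$''); you unpack it via the spectral projection, which is fine.

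There is, however, a slip in your spectral bookkeeping. You correctly argue that $\lambda$ is simple for $\mathcal{B}_l$ and strictly dominates every other eigenvalue, but then assert that on the complementary invariant subspace ``$\mathcal{B}_l$ has spectral radius at most $1$'' and hence $\|E_n\|=O(n^d)$. That is not true in general: the non-Perron eigenvalues of the primitive block $B_0$ may well have modulus strictly between $1$ and $\lambda$, and these also live on the complementary subspace. The correct statement is that the spectral radius there is some $\mu<\lambda$, giving $\|E_n\|=O(\mu^n n^d)$; this still yields $E_n=o(\lambda^n)$, so your conclusion survives, but the intermediate bound should be fixed. The paper avoids this issue by simply citing that $\lambda$ is the top eigenvalue with multiplicity one, which is all that is needed.
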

\begin{proof}
 Let $\alpha \in \mathbb{A}_l$ start with $a$. For $n$ large, the number of occurrences of $w$ in $\zeta^n(a)$ is approximately the same as the number of occurrences of $w$ in $\zeta^n_l(\alpha)$.
Also $$ (w, \zeta_l^n(\alpha)) = M^n_l(w,\alpha). $$ We have
\begin{flalign*}
 \lim_{n \to \infty}\frac{ (w, \zeta^n(a))}{\lambda^n} = \lim_{n \to \infty}\frac{ (w,\zeta^n_l(\alpha))}{\lambda^n} = \lim_{n \to \infty}\frac{M_l^n(w,\alpha)}{\lambda^n} = \lim_{n \to \infty}\frac{\mathcal{B}_l^n(w,\alpha)}{\lambda^n} =:d_{w,a}.
\end{flalign*}

Indeed, the limit exists because $\lambda$ is the top eigenvalue of $\mathcal{B}_l$. 
The limit is non-negative because it is a sequence of non-negative numbers. The limit does not depend on the exact choice of $\alpha$ because by Lemma~\ref{L:Ml}(c), any two columns of $M_l^n$ starting with the same letter in $\mathbb{A}$ differ by a bounded amount and thus give the same limit.   
\end{proof}

\begin{lemma}[Kirchhoff's Law]\label{Kirchhoff'sLaw}
Let $a \in B_0$. Let $w \in \mathbb{A}_l$ cross $B_0$. Let $we$ and $ew$ be length one extensions of $w$ by $e \in \mathbb{A}$. Then $$d_{w,a} = \sum_{e \in \mathbb{A}} d_{we,a} = \sum_{e \in \mathbb{A}} d_{ew,a}.$$ \end{lemma}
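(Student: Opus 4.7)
The plan is to exploit the fact that in the finite word $\zeta^n(a)$, nearly every occurrence of $w$ has a unique one-letter extension in either direction; the uncounted boundary occurrences are bounded by a constant, so they disappear after normalizing by $\lambda^n$ with $\lambda>1$.

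First, I would record the elementary counting identity: for any finite word $\sigma$ on $\mathbb{A}$ and any word $w$ of length $l$,
$$\sum_{e \in \mathbb{A}} (we, \sigma) = (w, \sigma) - \varepsilon_R(\sigma, w),$$
where $\varepsilon_R \in \{0,1\}$ equals $1$ exactly when $w$ occurs as the length-$l$ suffix of $\sigma$ (that particular occurrence having no one-letter forward continuation inside $\sigma$). Every other occurrence of $w$ in $\sigma$ is followed by exactly one letter $e\in\mathbb{A}$ and hence contributes to exactly one term $(we,\sigma)$. The symmetric identity $\sum_{e \in \mathbb{A}} (ew, \sigma) = (w,\sigma) - \varepsilon_L(\sigma,w)$ with $\varepsilon_L \in \{0,1\}$ records the analogous statement for backward extensions.

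Next, I would specialize to $\sigma = \zeta^n(a)$ and divide by $\lambda^n$:
$$\sum_{e \in \mathbb{A}} \frac{(we, \zeta^n(a))}{\lambda^n} = \frac{(w, \zeta^n(a))}{\lambda^n} - \frac{\varepsilon_R}{\lambda^n}.$$
Because $|\mathbb{A}|$ is finite, the limit as $n\to\infty$ passes through the sum. Each summand $(we,\zeta^n(a))/\lambda^n$ converges to $d_{we,a}$ by the preceding lemma, applied now at length $l+1$: note that $we$ still crosses $B_0$ since $w$ does, so the relevant transition matrix is a diagonal block of $\mathcal{B}_{l+1}$, which by Proposition~\ref{P:Eigenvalues} has the same top eigenvalue $\lambda$ as $B_0$. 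The boundary term $\varepsilon_R/\lambda^n$ tends to $0$ since $\lambda>1$. This yields $d_{w,a} = \sum_{e\in\mathbb{A}} d_{we,a}$, and the identical argument for backward extensions gives $d_{w,a} = \sum_{e\in\mathbb{A}} d_{ew,a}$.

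The only real point to verify beyond this bookkeeping is that $\lambda$ is indeed the dominant eigenvalue controlling the growth of $(we,\zeta^n(a))$, so that the boundary correction is genuinely lower order; this is exactly the combination of Proposition~\ref{P:Eigenvalues} (the top eigenvalue is stable under passing from $\mathbb{A}_l$ to $\mathbb{A}_{l+1}$) together with the previous lemma (existence of the limit at length $l+1$). With these in hand, Kirchhoff's law follows immediately as an interchange of a finite sum with a limit.
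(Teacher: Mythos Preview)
Your proof is correct and follows essentially the same approach as the paper: the paper also observes that $(w,\zeta^n(a))$ and $\sum_{e \in \mathbb{A}}(we,\zeta^n(a))$ differ by at most one (precisely when $\zeta^n(a)$ ends in $w$), divides by $\lambda^n$, and lets $n\to\infty$. Your version is somewhat more explicit in justifying why each $d_{we,a}$ exists via the preceding lemma at length $l+1$, but the argument is the same.
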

\begin{proof}
We have $(w,\zeta^n(a))$ and $\sum_{e \in \mathbb{A}}(we,\zeta^n(a))$ differ only when $\zeta^n(a)$ ends in $w$, so their difference is at most one. Thus 
$$ \left| \frac{(w,\zeta^n(a))}{\lambda^n} - \sum_{e \in \mathbb{A}}\frac{(we,\zeta^n(a))}{\lambda^n} \right| \to 0 \text{ as } n \to \infty$$ which implies that $d_{w,a} = \sum_{e \in \mathbb{A}} d_{we,a}$. Similarly $d_{w,a} = \sum_{e \in \mathbb{A}} d_{ew,a}$. 
\end{proof}

\begin{lemma}
Let $a, b \in B_0$ be distinct. Then $$d_{w,b} = \kappa  d_{w,a}$$ for every word $w$ that crosses $B_0$ where $\kappa = \kappa(a,b,\zeta|_{B_0})$.
\end{lemma}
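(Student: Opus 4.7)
The plan is to identify $d_{w,a}$ as $x_w \cdot v_a^L$, where $v^L$ is the left Perron--Frobenius eigenvector of the primitive block $B_0$ and $x$ is a vector independent of $a$; the desired ratio $d_{w,b}/d_{w,a} = v_b^L/v_a^L$ will then be independent of $w$ and visibly depend only on $a$, $b$, and $B_0$ (which is determined by $\zeta|_{B_0}$).

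First, I would recall that $d_{w,a} = \lim_{n \to \infty} \mathcal{B}_l^n(w,\alpha)/\lambda^n$ for any $\alpha \in \mathcal{B}_l$ beginning with $a$. By Lemma~\ref{L:B_i}, the matrix $\mathcal{B}_l$ is lower triangular block diagonal with diagonal blocks $\widetilde{B_0}, \overline{B_1}, \ldots, \overline{B_k}$; the primitive block $B_0$ contributes the simple top eigenvalue $\lambda > 1$ to $\widetilde{B_0}$, and all remaining eigenvalues of these diagonal blocks have absolute value $\leq 1 < \lambda$. Hence $\lambda$ is a simple eigenvalue of $\mathcal{B}_l$ strictly dominating the rest in absolute value, so by standard spectral theory
$$ \mathcal{B}_l^n / \lambda^n \longrightarrow P = \frac{x y^T}{y^T x} $$
for right and left $\lambda$-eigenvectors $x, y$ of $\mathcal{B}_l$. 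Consequently $d_{w,a} = x_w y_\alpha / (y^T x)$ for any such $\alpha$.

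Second, I would identify $y$ explicitly. Define $\tilde y \colon \mathcal{B}_l \to \mathbb{R}$ by $\tilde y(\alpha) := v_a^L$ when $\alpha \in \widetilde{B_0}$ starts with $a \in B_0$, and $\tilde y(\alpha) := 0$ when $\alpha$ lies in some $\overline{B_i}$ with $i \geq 1$. To verify $\tilde y^T \mathcal{B}_l = \lambda \tilde y^T$, note that for $\beta \in \widetilde{B_0}$ starting with $b \in B_0$, the words appearing in $\zeta_l(\beta) = w_1 \cdots w_{|\zeta(b)|}$ have first letters equal to the successive letters of $\zeta(b)$, so
$$(\tilde y^T \mathcal{B}_l)(\beta) \;=\; \sum_{c \in B_0} v_c^L \cdot (c, \zeta(b)) \;=\; \sum_{c \in B_0} v_c^L\, M(c,b) \;=\; \lambda v_b^L \;=\; \lambda \tilde y(\beta),$$
where the penultimate equality uses that $v^L$ is a left $\lambda$-eigenvector of the $B_0$ block of $M$. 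For $\beta \in \overline{B_i}$ with $i \geq 1$, the property that $\zeta(b)$ for $b \in B_i$ contains only letters from $\bigcup_{j \geq i} B_j$ implies that no $\alpha \in \widetilde{B_0}$ occurs in $\zeta_l(\beta)$, and $(\tilde y^T \mathcal{B}_l)(\beta) = 0 = \lambda \tilde y(\beta)$. By simplicity of $\lambda$, $y$ is a scalar multiple of $\tilde y$.

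Finally, for $\alpha, \beta \in \mathcal{B}_l$ starting with $a, b \in B_0$ respectively,
$$\frac{d_{w,b}}{d_{w,a}} \;=\; \frac{y_\beta}{y_\alpha} \;=\; \frac{v_b^L}{v_a^L} \;=:\; \kappa(a, b, \zeta|_{B_0}),$$
which is independent of $w$ and is determined by $B_0$, hence by $\zeta|_{B_0}$. The main obstacle is the explicit identification of $y$ in the second step: one must correctly read off the block-triangular structure of $\mathcal{B}_l$ and verify that only the initial letter of $\alpha$ contributes to $y_\alpha$, which ultimately reflects the $O(l-1)$ column-comparison of Lemma~\ref{L:Ml}(c) (these boundary discrepancies are killed upon normalizing by $\lambda^n$).
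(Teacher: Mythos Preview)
Your proof is correct and takes a genuinely different route from the paper. The paper argues in two stages: for each $l$ it uses only that $\lambda$ is the simple top eigenvalue of $\mathcal{B}_l$ to conclude that the limit matrix $P=\lim \mathcal{B}_l^n/\lambda^n$ has rank one, hence its columns are proportional, giving a constant $\kappa_l$ independent of $w$; it then invokes Kirchhoff's law (Lemma~\ref{Kirchhoff'sLaw}) to show inductively that $\kappa_l=\kappa_1$ for all $l$, where $\kappa_1$ is computed directly from Perron--Frobenius on $B_0$.

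You instead identify the left $\lambda$-eigenvector of $\mathcal{B}_l$ explicitly as the pullback of the left Perron--Frobenius eigenvector $v^L$ of $B_0$ via the first-letter map, which immediately yields the closed formula $\kappa=v_b^L/v_a^L$ without any appeal to Kirchhoff's law or induction on $l$. This is cleaner and more informative: it pins down $\kappa$ exactly and makes transparent why it depends only on $\zeta|_{B_0}$. The paper's approach, on the other hand, isolates Kirchhoff's law as the bridge between different word lengths, which is conceptually useful elsewhere in the paper. One small point worth making explicit in your write-up is that $v_a^L>0$ by Perron--Frobenius (since $B_0$ is primitive), so the ratio $v_b^L/v_a^L$ is well-defined; the identity $d_{w,b}=(y_\beta/y_\alpha)\,d_{w,a}$ then follows from the rank-one formula $P=xy^T/(y^Tx)$ without any division by $d_{w,a}$.
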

\begin{proof}
Let's first consider the case when length of $w$ is one. We have $\zeta$ restricted to $B_0$ is primitive with top eigenvalue $\lambda>1$. Then $$d_{w,a} = \lim_{n \to \infty}\frac{M^n(w,a)}{\lambda^n} = \lim_{n \to \infty}\frac{B_0^n(w,a)}{\lambda^n}.$$ Since $B_0$ is primitive, the limit of $B^n_0 / \lambda^n$ is a matrix $P$ that is spanned by a positive eigenvector corresponding to $\lambda$. Since left eigenvector of $B_0$ is also positive, all columns of $P$ are positive multiples of each other. Thus $d_{w,b} = P(w,b)$ is a scalar multiple of $d_{w,a} = P(w,a)$ which does not depend on $w$. We call this constant $\kappa_1$.  

Now consider the case when length of $w$ is $l$. We will first show that the constant $\kappa_l$, where $d_{w,b} = \kappa_l d_{w,a}$, does not depend on $w$ and then we will show that $\kappa_l = \kappa_1$ for all $l\geq 2$. Since $\lambda$ is the unique top eigenvalue of $\mathcal{B}_l$, $\lim_{n \to \infty}\mathcal{B}_l^n / \lambda^n$ is a matrix $P$ whose column span is an eigenvector corresponding to $\lambda$. Thus $d_{w,b} = P(w,b)$ is a scalar multiple of $d_{w,a} = P(w,a)$ which does not depend on $w$. We call this constant $\kappa_l$. 

Now we will show that $\kappa_l = \kappa_1$. Let $w$ be a word of length one. We have $d_{w,b} = \sum_{e \in \mathbb{A}} d_{we,b}$. Also $d_{w,b} = \kappa_1 d_{w,a}$ and $d_{we,b} = \kappa_2 d_{we,a}$. Thus we have $\kappa_1 d_{w,a} = \kappa_2 \sum_{e \in \mathbb{A}}d_{we,a} = \kappa_2 d_{w,a}$ which implies $\kappa_2 = \kappa_1$. We can repeat the same argument to get $\kappa_l = \kappa_1$ for every $l\geq 2$.  
\end{proof}

To summarize the results about substitutions we have the following proposition. 
\begin{proposition}\label{P:MainResultSubstitution}
Let $\zeta$ be a substitution on an alphabet $\mathbb{A}$ such that the transition matrix is lower triangular block diagonal with top left block $B_0$ primitive, and for every $e \in B_0$, $\zeta(e)$ starts and ends with a letter in $B_0$. Then there is a fixed infinite word $\rho$ obtained by iterating a letter in $B_0$ under $\zeta$. Moreover, the frequency of a word $w$ on $\mathbb{A}$ in $\rho$ that crosses $B_0$ is well defined up to scale and satisfies Kirchhoff's law. 
\end{proposition}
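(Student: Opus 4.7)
The plan is to assemble the proposition directly from the results already established in this subsection. First, since $B_0$ is primitive and $\zeta(e)$ starts with a letter of $B_0$ for every $e \in B_0$, Lemma~\ref{L:Substitution main} applies: after replacing $\zeta$ by a power, there exists $a \in B_0$ with $\zeta(a)$ beginning in $a$, so the limit $\rho_a = \lim_{n \to \infty} \zeta^n(a)$ is well defined and fixed by $\zeta$. Moreover, any two such fixed points obtained from letters in $B_0$ have the same set of finite subwords, so the collection of $w \in \mathbb{A}_l$ crossing $B_0$ that appear in $\rho$ does not depend on the choice of starting letter.

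Next I would verify that the frequency $d_{w,a}$ exists for any $w$ crossing $B_0$. For $\alpha \in \mathbb{A}_l$ starting with $a$, one has $(w,\zeta^n(a)) = M_l^n(w,\alpha) + O(1)$, and because $w$ crosses $B_0$ the relevant entries of $M_l^n$ agree with those of the diagonal block $\mathcal{B}_l^n = (\widetilde{B_0} \cup \bigcup_{i\geq 1}\overline{B_i})^n$. By Lemma~\ref{L:B_i}, the top eigenvalue $\lambda$ of $B_0$ is the unique top eigenvalue of $\mathcal{B}_l$ and is simple, so dividing by $\lambda^n$ and passing to the limit yields a finite non-negative value $d_{w,a}$. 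The limit is independent of the specific $\alpha$ with first letter $a$ by Lemma~\ref{L:Ml}(c), because two such columns differ by a bounded amount.

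The Kirchhoff relations $d_{w,a} = \sum_{e} d_{we,a} = \sum_{e} d_{ew,a}$ come from Lemma~\ref{Kirchhoff'sLaw}: the error in matching occurrences of $w$ in $\zeta^n(a)$ with occurrences of its one-letter extensions is at most one (contribution of the boundary), which is swallowed by division by $\lambda^n \to \infty$. Finally, to see the frequency is well defined up to a global scale, I would invoke the argument that $\lim_{n\to\infty} \mathcal{B}_l^n / \lambda^n$ is a rank-one matrix $P$ whose column span is the Perron eigenvector for $\lambda$ on $\mathcal{B}_l$; hence for any two starting letters $a,b \in B_0$, the vectors $(d_{w,a})_w$ and $(d_{w,b})_w$ are positive scalar multiples of each other. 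A short additivity argument (apply Kirchhoff's law in length $1$, then in length $l$, and match the two scaling constants $\kappa_1$ and $\kappa_l$) shows the scalar $\kappa$ does not depend on $l$, so the frequency on $\rho$ is well-defined up to a single global scale determined only by $\zeta|_{B_0}$.

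The only place real work is hidden is the appeal to Lemma~\ref{L:B_i} and Proposition~\ref{P:Eigenvalues} to guarantee that $\lambda$ remains the simple dominant eigenvalue of $\mathcal{B}_l$ when one passes from $M$ to $M_l$; this is what keeps the extra eigenvalues from the off-diagonal blocks $\overline{B_i}$ and from the auxiliary matrices $Q, Q'$ below the unit circle and thus prevents them from contaminating the limit $\lim_n \mathcal{B}_l^n/\lambda^n$. Once that spectral input is in hand, the remainder of the proof is just a bookkeeping assembly of the preceding lemmas.
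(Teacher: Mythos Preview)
Your proposal is correct and follows essentially the same route as the paper. In the paper this proposition is explicitly presented as a summary of the preceding results (Lemma~\ref{L:Substitution main}, the existence lemma for $d_{w,a}$, Lemma~\ref{Kirchhoff'sLaw}, and the scaling lemma $d_{w,b}=\kappa d_{w,a}$), and your write-up simply assembles those pieces in the intended order, including the spectral input from Lemma~\ref{L:B_i} and Proposition~\ref{P:Eigenvalues} guaranteeing that $\lambda$ is the simple top eigenvalue of $\mathcal{B}_l$.
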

\subsubsection{Train track map as a substitution}
Let $\oo$ be a free group outer automorphism. Let $\phi: G \to G$ be a completely split train track representative of $\oo$ with filtration $\fltr$. The transition matrix for $\phi$, denoted $M_{\phi}$, is lower triangular block diagonal. Let $a$ be an edge in an EG stratum $H_r$ such that up to taking powers $\phi(a)$ starts with $a$. Let $\rho_a = \lim_{n \to \infty}\phi^n(a)$. We want to understand the frequency of occurrence of paths in $G_r$ that cross $H_r$ and appear in $\rho_a$. We may not be able to treat $\phi$ as a substitution directly since there could be cancellations and inverse of edges would have to be treated separately. The proof of the next proposition explains how to view a completely split train track map as a substitution for the purpose of calculating frequencies of certain paths. 

We set up some notation about exceptional paths that will be used in the next proposition. Let $e_1, e_2 \in G$ be two linear edges such that $\phi(e_1) = e_1 \sigma^{d_1}$ and $\phi(e_2)=e_2 \sigma^{d_2}$ where $\sigma$ is an INP and $d_1 \neq d_2$. If $d_1, d_2 >0$, then $x_m = e_1 \sigma^m \overline{e}_2$ where $m \in \mathbb{Z}$ is an exceptional path. We say $x_m$ has \emph{width} $|m|$. Let $\delta = d_1 - d_2$. Then $\phi(x_m)$ is the exceptional path $x_{m+\delta}$. 

\begin{proposition}\label{P:TrainTrackSubstitution}
Let $\phi: G \to G$ be a completely split train track map. Let $a$ be an edge in an EG stratum $H_r$ such that $\phi(a)$ starts with $a$, and let $\rho_a := \lim_{n \to \infty}\phi^n(a)$. Let $\gamma$ be a path in $G_r$ that crosses $H_r$. Then $$\lim_{n \to \infty} \frac{(\gamma, \phi^n(a))}{\lambda^n} =:d_{\gamma,a}$$ exists and is non-negative. Here $\lambda$ is the Perron-Frobenius eigenvalue of the aperiodic EG stratum $H_r$. If $b \in H_r$ is another edge, then for every $\gamma$ as above, $$d_{\gamma,b} = \kappa d_{\gamma,a}$$ where $\kappa$ is a constant with $\kappa = \kappa(a, b, \phi|_{H_r})$.
\end{proposition}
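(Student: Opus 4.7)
The plan is to reduce to Proposition~\ref{P:MainResultSubstitution} by realizing $\phi$ as a substitution on a finite alphabet of ``atoms,'' exploiting the defining property of completely split train track maps: the image of each atom decomposes into a concatenation of atoms with no cancellation between consecutive pieces.

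First I would restrict attention to $G_r$ (so that $H_r$ is the topmost stratum) and define an alphabet $\mathbb{A}$ consisting of the oriented atomic pieces that can occur in a complete splitting of a path in $G_r$: oriented edges of $G_r$ in irreducible strata, oriented INPs of height at most $r$, oriented exceptional paths of height at most $r$, and oriented taken connecting paths in zero strata of $G_r$. By Facts~\ref{F:Facts about CTs} the INPs are finite in number; the taken connecting paths are likewise finite; and because $H_r$ is EG, all linear edges (which are NEG by definition) and all exceptional paths built from them lie below $H_r$. Order the letters by height (top to bottom, $H_r$ first), so that the induced substitution $\zeta \colon \mathbb{A} \to \mathbb{A}^{\ast}$ has transition matrix that is lower triangular block diagonal in the sense of Section~\ref{subsec:AppendixSubstitution}, with top block $B_0$ equal to the transition matrix of $\phi|_{H_r}$. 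By hypothesis $H_r$ is aperiodic, hence $B_0$ is primitive, and the hypothesis $\phi(a)$ starts with $a$ identifies $\rho_a = \lim_{n \to \infty} \zeta^n(a)$ with the fixed infinite word furnished by Lemma~\ref{L:Substitution main}.

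Next I would apply Proposition~\ref{P:MainResultSubstitution}. Any path $\gamma$ in $G_r$ that crosses $H_r$ decomposes uniquely into a word $\widetilde\gamma$ of atoms that crosses $B_0$, and the counting identity
\[
(\gamma, \phi^n(a)) = (\widetilde\gamma, \zeta^n(a))
\]
transfers the frequency statement from the substitution setting to the train track setting. Proposition~\ref{P:MainResultSubstitution} then yields existence and nonnegativity of $d_{\gamma,a} = \lim_{n \to \infty} (\widetilde\gamma, \zeta^n(a))/\lambda^n$, where $\lambda$ is the Perron--Frobenius eigenvalue of $B_0 = \phi|_{H_r}$. For another edge $b \in H_r$, the scaling constant $\kappa$ in $d_{\gamma,b} = \kappa\, d_{\gamma,a}$ is the ratio of the entries at $a$ and $b$ in the rank-one limit $\lim_{n \to \infty} B_0^n / \lambda^n$, so $\kappa$ depends only on $\phi|_{H_r}$.

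The main obstacle is the infinite family of exceptional paths $\{x_m\}_{m \in \mathbb{Z}}$, which a priori destroys the finiteness of $\mathbb{A}$. The resolution is that any fixed $\gamma$ has bounded atomic length, so only finitely many $x_m$ can occur as atoms of $\widetilde\gamma$; moreover, since $\phi(x_m) = x_{m+\delta}$ is itself a single atom, the infinite portion of the transition matrix decouples into a shift-like structure on each $(e_1, e_2)$-family, contributes only to blocks strictly below $B_0$, and leaves the Perron spectrum and primitivity of $B_0$ undisturbed. The remaining technical point, that no cancellation occurs between consecutive atoms when $\phi$ is applied to a word of atoms, is exactly the content of completeness of the splitting.
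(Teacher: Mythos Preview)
Your overall strategy---encode $\phi$ as a substitution on splitting units and invoke Proposition~\ref{P:MainResultSubstitution}---is exactly the paper's, but the central counting step contains a genuine gap.

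You assert that any path $\gamma$ in $G_r$ crossing $H_r$ ``decomposes uniquely into a word $\widetilde\gamma$ of atoms'' and that $(\gamma,\phi^n(a)) = (\widetilde\gamma,\zeta^n(a))$. Neither claim is correct. A path $\gamma$ need not be a concatenation of splitting units at all: it may begin or end in the interior of an INP, an exceptional path, or a taken connecting path. Worse, even when $\gamma$ \emph{is} a concatenation of atoms, its occurrences in $\rho_a$ need not respect atom boundaries. Example~\ref{E:TT as Substitution} shows this explicitly: with $\sigma = abAB$ a Nielsen path, the edge-path $ca$ occurs in $\rho_c$ both as the atom-word $c\cdot a$ and as the initial two-edge segment of the atom-word $c\cdot\sigma$, so the frequency of $ca$ is $d_{ca,c} + d_{c\sigma,c}$, not the frequency of a single atom-word.

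The paper repairs this by replacing your single $\widetilde\gamma$ with a finite \emph{set} $r(\gamma)$ of atom-words: one entry for each way $\gamma$ can sit inside $\rho_a$ relative to the complete splitting (as a full concatenation of units, as a subword of a single INP, or as a subpath overlapping partially with units at its ends, in which case one records the minimal enclosing concatenation of units). The frequency $d_{\gamma,a}$ is then the sum over $r(\gamma)$ of the substitution frequencies. Your handling of the infinite exceptional-path family is morally right but left vague; the paper makes it precise by collapsing all exceptional paths of width exceeding a $\gamma$-dependent threshold to a single letter, yielding a genuinely finite alphabet $\mathbb{A}_\gamma$ on which Proposition~\ref{P:MainResultSubstitution} applies, and then checking that the answer is independent of the threshold.
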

\begin{proof}
We will first show how to obtain a substitution from the completely split train track map $\phi$. Then applying Proposition~\ref{P:MainResultSubstitution} to this substitution concludes the proof. The ray $\rho_a$ is completely split and the terms of the complete splitting, called splitting units, of $\rho_a$ form an alphabet $\mathbb{A}_{\infty}$ for a substitution. But $\mathbb{A}_{\infty}$ can be infinite if there are exceptional paths. We will define a finite alphabet $\mathbb{A}_{\gamma}$, which depends on $\gamma$, by identifying some elements in $\mathbb{A}_{\infty}$ in order to calculate the frequency of occurrence of $\gamma$ in $\rho_a$. We will also show that the frequency of $\gamma$ in $\rho_a$ does not depend on the choice of the alphabet $\mathbb{A}_{\gamma}$. Let $\mathcal{N}$ be the set of all INPs, $r$-taken connecting paths and exceptional paths that appear in $\rho_a$.

Before we define the alphabet $\mathbb{A}_{\gamma}$, we define a relation from the set of all finite paths in $\rho_a$ that cross $H_r$, denoted $\mathcal{P}_r(\rho_a)$, to the set of all finite words on $\mathbb{A}_{\infty}$, denoted $\mathcal{W}(\mathcal{A}_{\infty})$, $$r: \mathcal{P}_r(\rho_a) \to \mathcal{W}(\mathcal{A}_{\infty}).$$
For a finite path $\gamma \in \mathcal{P}_r(\rho_a)$, the set $r(\gamma)$ consists of the following words: 

\begin{enumerate}[(a)]
\item If an occurrence of $\gamma$ in $\rho_a$ is a  concatenation of splitting units, then $r(\gamma)$ contains the corresponding word on $\mathbb{A}_{\infty}$. 

\item If an occurrence of $\gamma$ in $\rho_a$ is a subword of an INP $\sigma$, then $r(\gamma)$ contains the element of $\mathbb{A}_{\infty}$ determined by $\sigma$, denoted $w_{\sigma}$. There are only finitely many INPs that appear in $\rho_a$ therefore the number of occurrences of a path $\gamma$ in an INP is bounded. If $\sigma$ contains $n$ occurrences of $\gamma$, then we let $r(\gamma)$ contain $n$ copies of $w_{\sigma}$. Note that a path $\gamma$ in $\mathcal{P}_r(\rho_a)$ is not contained in an exceptional path or an $r$-taken connected path. 

\item If an occurrence of $\gamma$ has partial overlaps with some elements of $\mathcal{N}$, then consider a path $\gamma^{\p}$ such that $\gamma^{\p}$ is the smallest subpath of $\rho_a$ that is a concatenation of splitting units and which contains $\gamma$. Then $r(\gamma)$ contains the word on $\mathbb{A}_{\infty}$ corresponding to $\gamma^{\p}$.
\end{enumerate}

Thus every occurrence of $\gamma$ in $\rho_a$ corresponds to the occurrence of some word from $r(\gamma)$ in $\rho_a$. Note that $r(\gamma)$ can be an infinite set, for instance, when $\gamma$ has partial overlap with infinitely many exceptional paths in $\rho_a$. But the set of words in $r(\gamma)$ viewed in the alphabet $\mathbb{A}_{\gamma}$, defined below, will form a finite set. We now define the alphabet $\mathbb{A}_{\gamma}$. For simplicity, let's assume that $\gamma$ intersects only one family of exceptional paths, say determined by linear edges $e_1, e_2 \in G$. 
\begin{itemize}
\item Let $\mathcal{H} = \{ H_r = H_{i_1}, \ldots, H_{i_k}\}$ be the collection of strata crossed by edges in $H_r$. For every $H_{i_j}$, let $\mathbb{A}(H_{i_j})$ be the alphabet which contains an edge and its inverse as distinct letters if they both appear in $\rho_a$ otherwise the edge with the orientation that appears.

An edge in $G$ is called a \emph{Type 1} edge if it always appears with positive or negative orientation but not both in $\rho_a$. An edge which appears with both orientations in $\rho_a$ is said to be of \emph{Type 2}. If $H_{i_j}$ is an EG stratum, then either all edges in $H_{i_j}$ are Type 1 or all are Type 2 (see \cite{U:HyperbolicIWIP} for proof). Thus a substitution on $\mathbb{A}(H_r)$ representing $\phi$ restricted to $H_r$ is primitive. 


\item Now consider splitting units which are INPs, $r$-taken connecting paths and exceptional paths. Let $\mathbb{A}(\mathcal{N}_{\gamma})$ be an alphabet defined as follows:  
\begin{enumerate}[(a)]
\item All oriented INPs and $r$-taken connecting paths that appear in $\rho_a$ are contained $\mathbb{A}(\mathcal{N}_{\gamma})$. There can be infinitely many INPs in $G_r$ but only finitely many appear in $\rho_a$. 
\item Suppose $\gamma$ contains an exceptional path determined by $e_1, e_2$  or a subsegment of an exceptional path determined by $e_1, e_2$. Let $N$ be the maximum length of such an exceptional path that appears in $\gamma$, in $\phi(e)$ for all edges $e$ in $H_r$ and in an $r$-taken connecting path. Then $\mathbb{A}(\mathcal{N}_{\gamma})$ contains exceptional paths determined by $e_1, e_2$ of width less than equal to $N+1$ as distinct elements. All other exceptional paths determined by $e_1, e_2$ of width greater than $N+1$ correspond to a single element of $\mathbb{A}(\mathcal{N}_{\gamma})$.  
\item Suppose $\gamma$ does not intersect an exceptional path determined by $e_1, e_2$. Then all exceptional paths determined by $e_1, e_2$ correspond to a single element of $\mathbb{A}(\mathcal{N}_{\gamma})$. 
\end{enumerate}

\item Let $\mathbb{A}_{\gamma}$ be defined as the set $\mathbb{A}(H_{i_1}) \sqcup \cdots \sqcup \mathbb{A}(H_{i_k}) \sqcup \mathbb{A}(\mathcal{N}_{\gamma})$ and let $\zeta_{\gamma, \phi}$ be a substitution on $\mathbb{A}_{\gamma}$ determined by $\phi$. Let $\tilde{r}(\gamma)$ be the set of words in $r(\gamma)$ viewed in the alphabet $\mathbb{A}_{\gamma}$. Then $\tilde{r}(\gamma)$ is a finite set of words on $\mathbb{A}_{\gamma}$. The frequency of occurrence of a path $\gamma \in \mathcal{P}_r(\rho_a)$ in $\rho_a$ is given by the sum of the frequencies of the words in $\tilde{r}(\gamma)$.  \end{itemize}

If we replace $N+1$ by $N+C$ for any $C\geq 1$ in the above construction to get a different alphabet $\mathbb{A}_{\gamma}^{\p}$, then the frequency of $\gamma$ calculated from the two alphabets is the same. More precisely, let $\mathbb{A}_{\gamma}$ and $\mathbb{A}_{\gamma}^{\p}$ be two alphabets which differ only in the naming of exceptional paths determined by $e_1, e_2$ of length greater than $N+1$. Let $\zeta$ and $\zeta^{\p}$ be the corresponding substitutions, and let $\tilde{r}(\gamma)$ and $\tilde{r}^{\p}(\gamma)$ be the set of words in $r(\gamma)$ viewed in $\mathbb{A}_{\gamma}$ and $\mathbb{A}_{\gamma}^{\p}$ respectively. An exceptional path maps to another exceptional path under $\phi$. Therefore $\zeta$ and $\zeta^{\p}$ have the same growth rate when restricted to $\mathbb{A}(H_r)$. Since the number of occurrences of $\gamma$ does not change, we get that the two substitutions yield the same frequency for words in $\tilde{r}(\gamma)$ and $\tilde{r}^{\p}(\gamma)$ and hence the same frequency for $\gamma$.

Thus, we have obtained an alphabet $\mathbb{A}_{\gamma}$. The completely split train track map $\phi$ induces a substitution $\zeta_{\gamma}$ on this alphabet. Now Proposition~\ref{P:MainResultSubstitution} can be applied to $\zeta_{\gamma}$ to compute the frequency of occurrence of $\gamma$ in $\rho_a$. Different substitutions constructed here for different words $\gamma$ differ only in exceptional paths. Since an exceptional path maps to another exceptional path these different substitutions have the same growth rate when restricted to $\mathbb{A}(H_r)$. Also Kirchhoff's law still holds for frequencies of paths in $\rho_a$ because $(\gamma, \phi^n(a))$ and $\sum_{e \in G_r}(\gamma e, \phi^n(a))$ differ by a bounded amount.
\end{proof}

We do some examples below to exhibit how to view a completely split train track map as a substitution. 
\begin{example}
Let $R_3$ be the rose on three petals with labels $a,b,c$. Consider a homotopy equivalence $\phi: R_3 \to R_3$ given by $$\phi(a) = a, \phi(b) = Bac, \phi(c) = CBac.$$ Here capital letters denote inverses. The transition matrix for $\phi$ is 
\begin{center}
\begin{tabular}{c}
$ b \quad c \quad a$\\
$\begin{bmatrix}1 & 2 & 0 \\ 1 & 1 & 0 \\ 1 & 1 & 1\end{bmatrix}$
\end{tabular}
\end{center}
There are two strata $H_1 = \{a\}$ and $H_2 = \{b,c\}$. Every edge in $H_2$ is of Type 2. Let $\rho_C = \lim_{n \to \infty}\phi^n(C)$. We have $\mathcal{H} = \{H_2, H_1\}, \mathbb{A}(H_2) = \{b,c,B,C\}$ and $\mathbb{A}(H_1) = \{a, A\}$. Since there are no exceptional paths, we use one alphabet $\mathbb{A} = \{b,c,B,C, a, A\}$ and a substitution $\zeta_{\phi}$ on $\mathbb{A}$ whose transition matrix is given by

\begin{center}
\begin{tabular}{c}
$ b \enskip \,\, c \enskip \,\,B \enskip \,\, C \enskip \,\,a \enskip \,\,A$\\
$\left[\begin{array}{cccccc}
0&0&1&1&0&0 \\ 1&1&0&1&0&0 \\ 1&1&0&0&0&0 \\ 0&1&1&1&0&0 \\   1&1&0&0&1&0\\ 0&0&1&1&0&1
\end{array} \right]$
\end{tabular}
\end{center}
\end{example}
\begin{example}\label{E:TT as Substitution}
Consider a homotopy equivalence $\phi: R_5 \to R_5$ given by 
$$\phi(a) = ab,\phi(b) = bab, \phi(c) = cae, \phi(d) = dc\sigma d, \phi(e) = dcae$$ 
where $\sigma = abAB$ is a Nielsen path. There are two strata $H_1 = \{a,b\}$ and $H_2 = \{c,d,e\}$. Let $\rho_c = \lim_{n \to \infty}\phi^n(c)$. We have $\mathcal{H} = \{H_2, H_1\}, \mathbb{A}(H_2) = \{c,d,e\}, \mathbb{A}(H_1) = \{a, b\}$ and $\mathbb{A}(\mathcal{N})=\{\sigma\}$. Since there are no exceptional paths, we use one alphabet $\mathbb{A} = \{c,d,e,a,b,\sigma\}$ and a substitution $\zeta_{\phi}$ on $\mathbb{A}$ whose transition matrix is given by

\begin{center}
\begin{tabular}{c}
$ c \quad d \quad e \quad a \quad b \quad \sigma$\\
$\left[\begin{array}{cccccc}
   1&1&1&0&0&0 \\ 0&2&1&0&0&0 \\ 1&0&1&0&0&0 \\ 1&0&1&1&1&0 \\  0&0&0&1&2&0\\ 0&1&0&0&0&1
  \end{array} \right]$
\end{tabular}
\end{center}

In this example, the frequency of occurrence of the edge path $ca$ in $\rho_c$ comes from the occurrence of the words $ca$ and $c\sigma$ in $\rho_c(\zeta_{\phi})$. Thus the frequency of $ca$ in $\rho_c$ is equal to $d_{ca, c}+d_{c\sigma,c}.$ 
\end{example}

\begin{example}
This example illustrates the discussion of exceptional paths in Proposition~\ref{P:TrainTrackSubstitution}. Consider a homotopy equivalence $\phi: R_6 \to R_6$ given by 
\begin{center}\begin{tabular}{l l}
$\phi(a) = ab,$ & $\phi(b) = bab,$ \\ $\phi(c) = c\sigma^2,$ & $\phi(d) = d\sigma,$ \\ $\phi(e) = eaf,$ & $\phi(f) = fc\sigma D eaf$, 
\end{tabular}
\end{center}
where $\sigma = abAB$. Some exceptional paths are $x_i = c\sigma^i D$ for $i >0$. 
To calculate the frequency of words like $fx_4$ or $fc\sigma^4$ in $\rho_f$, consider the alphabet $\mathbb{A} = \{e, f, a, b, c, D, x_1, x_2, x_3, x_4, x_5, \sigma, \overline{\sigma}\}$ and substitution $\zeta$ such that 
\begin{center}\begin{tabular}{l l}
$\zeta(a) = ab,$ & $\zeta(b) = bab,$ \\
$\zeta(c) = c\sigma^2$, & $\zeta(d) = d\sigma$, \\
$\zeta(f) = fx_1eaf,$ & $\zeta(e) = eaf,$ \\

$\zeta(\sigma)=\sigma,$ &  $\zeta(\overline{\sigma}) = \overline{\sigma},$\\
$\zeta(x_i) = x_{i+1}$ & for $1\leq i \leq 3$ \\ $\zeta(x_4) = \zeta(x_5) = x_5$, & \\  
\end{tabular}
\end{center}
The path $\gamma = fc\sigma^4$ does not occur as a concatenation of splitting units in $\rho_f$. The path $\gamma^{\p} = fx_4$ is the smallest subpath of $\rho_f$ that is a concatenation of splitting units and contains $\gamma$. Thus the frequency of occurrence of $\gamma$ is the same as the frequency of occurrence of $\gamma^{\p}$. 
\end{example}
\subsection{Example of an outer automorphism relative to $\ffa$ when $\op{rank}(\ffa)=\op{rank}(\free)$} This is an example of a relative fully irreducible outer automorphism when rank of cofactor of $\ffa$ is zero. 
Let $\free = \la a, b, c \ra$ and let $\ffa = \{[\la a \ra],[\la b \ra],[\la c \ra]\}$. Let $\oo$ be an outer automorphism given by $$\oo(a) = a, \oo(b) = aCbcA, \oo(c) = CbcBc.$$ Let $\phi: G \to G$ be a relative train track representative of $\oo$ with $G$ as in Figure~\ref{F:TrainTrack}.
\begin{figure}[h]
    \centering
\includegraphics{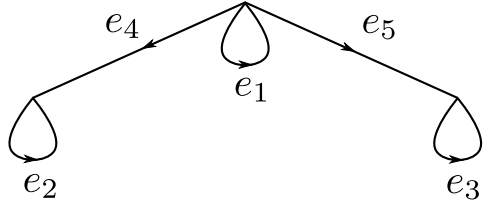}
    \caption{The graph $G$}
    \label{F:TrainTrack}
\end{figure}

The marking given by $$a\to e_1, b\to e_1e_4e_2E_4E_1, c\to e_5e_3E_5.$$ The map $\phi$ is given by 
\begin{center}
\begin{tabular}{l l l}
$\phi(e_1) = e_1$ & $\phi(e_2) = e_2$ & $\phi(e_3) = e_3$ \\ $\phi(e_4) = e_5E_3E_5e_1e_4$ & $\phi(e_5) = e_5E_3E_5e_1e_4e_2E_4E_1e_5 $ &
\end{tabular}
\end{center}
and the transition matrix for $\phi$ is
\begin{center}
\begin{tabular}{c}
$e_5 \,\,\, e_4 \,\,\, e_3 \,\,\, e_2 \,\,\, e_1 $\\
$\begin{bmatrix}  3 & 2 & 0 & 0 & 0 \\ 2 & 1 & 0 & 0 & 0 \\ 1 & 1 & 1 & 0 & 0 \\ 1 & 0 & 0 & 1 & 0 \\ 2 & 1 & 0 & 0 & 1 \end{bmatrix}.$
\end{tabular}
\end{center}
\end{example}

\bibliographystyle{alpha}
\bibliography{bib2}

\end{document}